\newcommand{\C}{\mathbb{C}}
\newcommand{\Z}{\mathbb{Z}}
\newcommand{\N}{\mathbb{N}}
\newcommand{\G}{\mathcal{G}}
\newcommand{\E}{\mathcal{E}}
\newcommand{\Odd}{\mathcal{O}}
\newtheorem{theorem}{Theorem}
\newtheorem{conjecture}{Conjecture}
\newtheorem{corollary}{Corollary}
\newtheorem{definition}{Definition}
\newtheorem{example}{Example}
\newtheorem{lemma}{Lemma}
\newtheorem{proposition}{Proposition}
\numberwithin{equation}{section}
\begin{document}
\title{\textbf{Schur Ring, Run Structure and Periodic Compatible Binary Sequences}}
\author{Ronald Orozco L\'opez}

\newcommand{\Addresses}{{
  \bigskip
  \footnotesize

  \textsc{Department of Mathematics, Universidad de los Andes,
    Bogot\'a Colombia,}\par\nopagebreak
  \textit{E-mail address}, \texttt{rj.orozco@uniandes.edu.co}

}}

\maketitle

\begin{abstract}
In this paper three Schur ring are discussed, namenly: Hamming, circulant orbists and decimated
circulant orbits Schur ring. By using autocorrelation function and the run structure of binary
sequences we proof the relation between this Schur ring and combinatorial structures such as
Hadamard matrices, periodic compatible binary sequences and perfect binary sequences. Cai proved 
for binary sequences that the autocorrelation function is in fact completely determined by its 
run structure. Also, he characterised the structure of the circulant Hadamard matrices. We
characterise a more general structure, called periodic compatible binary sequences ($PComS$ for
brevety), which generalises Hadamard matrices, periodic complementary binary 
sequences and binary sequences with $2$-level autocorrelation. Families of periodic compatibles
binary sequences are presented. Also, we compute a bounds on familias $PComS$ in Hamming Schur ring. 
The results obtained are applied to families of $PComS$ such as circulant, with one and two 
circulant cores, Goethals-Seidel type and partial Hadamard matrices and perfect binary sequences.
\end{abstract}
{\bf Keywords:} Schur ring, Run string, Autocorrelation, Hadamard matrices, Binary sequences\\
{\bf Mathematics Subject Classification:} 05E15,15B34,05B30,11B83

\section{Introduction}

The concept of Schur ring ($S$-ring) was iniciated by I.Schur in their classical paper [1] which 
was published in 1933. Later, the theory of $S$-ring was developed for Wielandt [2].
But the main objective of theory was purely group theoretical concept, especially in
problem concerning the permutations groups. In the 80s and 90s, the theory received a notable 
impulse by the study of $S$-ring over cyclic groups and their applications to the graph theory 
[3],[4],[5],[6]. Later, Leung and Man obtained a complete clasification of 
$S$-ring over finite cyclic groups in four types, namely: trivial, orbits, dot products and 
wedge products [7],[8]. 

In this paper the Schur ring over $\Z_{2}^{n}$ are not classified, but three types are discussed,
namely: Hamming, Circulant orbists and Decimated circulant orbits Schur ring. By using
autocorrelation function and the run structure of binary
sequences we proof the relation between this Schur ring and combinatorial structures such as
Hadamard matrices, periodic compatible binary sequences and perfect binary sequences. Cai[11] proved 
for binary sequences that the autocorrelation function is in fact completely determined by its 
run structure. Also, he characterised the structure of the circulant Hadamard matrices. We
characterise a more general structure, called periodic compatible binary sequences ($PComS$ for
brevety), which generalises Hadamard matrices, periodic complementary binary 
sequences and binary sequences with $2$-level autocorrelation.

This paper is organized as follows. In Section 2, three types of Schur ring over $\Z_{2}^{n}$ 
are established: Hamming, Circulant orbists and Decimated circulant orbits Schur ring. Section 
3 relates composition of integer and circulant orbit Schur ring. In Section 4, characterization
of the autocorrelation-run structure for perfect binary sequences is presented. In Section 5,
families of periodic compatibles binary sequences are discussed, particularly families in 
$\Z_{2}^{n}$, $n=4,5,6,7,8,9$. Also, the relation with the $S$-sets of Hamming Schur ring is
stated. In Section 6, we compute a bounds on families $PComS$ in Hamming Schur ring. In Section 7, 
the results obtained are applied to families of $PComS$ such as circulant, with one and two 
circulant cores, Goethals-Seidel type and partial Hadamard matrices and perfect binary sequences.

\section{Some Schur rings in $\mathbb{Z}_{2}^{n}$}

Let $G$ be a finite group with identity element $e$ and $\C[G]$ the group algebra of all formal sums 
$\sum_{g\in G}a_{g}g$, $a_{g}\in \C$, $g\in G$. For $T\subset G$, the element $\sum_{g\in T}g$ will 
be denoted by $\overline{T}$. Such an element is also called a $\textit{simple quantity}$. 
The transpose of $\alpha = \sum_{g\in G}a_{g}g$ is defined as $\alpha^{\top} = \sum_{g\in G}a_{g}(g^{-1})$. Let $\{T_{0},T_{1},...,T_{r}\}$ be a partition of $G$ and let $S$ be the subspace of $\C[G]$ spanned by $\overline{T_{1}},\overline{T_{2}},...,\overline{T_{r}}$.  We say that $S$ is a $\textit{Schur ring}$ ($S$-ring, for short) over $G$ if: 

\begin{enumerate}
\item $T_{0} = \lbrace e\rbrace$, 
\item for each $i$, there is a $j$ such that $T_{i}^{\top} = T_{j}$,
\item for each $i$ and $j$, we have $\overline{T_{i}}\overline{T_{j}} = \sum_{k=1}^{r}\lambda_{i,j,k}\overline{T_{k}}$, for constants $\lambda_{i,j,k}\in\C$.
\end{enumerate}

The numbers $\lambda_{i,j,k}$ are the structure constants of $S$ with respect to the linear base 
$\{\overline{T_{0}},\overline{T_{1}},...,\overline{T_{r}}\}$. 
The sets $T_{i}$ are called the \textit{basic sets} of the $S$-ring $S$. Any union of them is called an $S$-set. Thus, $X\subseteq G$ is an $S$-set if and only if $\overline{X}\in S$. The set of all $S$-set is closed with respect to taking inverse and product. Any subgroup of $G$ that is an $S$-set, is called an $S$-\textit{subgroup} of $G$ or $S$-\textit{group}.

In this paper denote by $\Z_{2}$ the cyclic group of order 2 with elements $+$
and $-$(where + and $-$ mean 1 and $-1$ respectively). Let 
$\Z_{2}^{n}=\overset{n}{\overbrace{\Z_{2}\times \cdots \times \Z_{2}}}$. Then all $X\in\Z_{2}^{n}$ are sequences of $+$ and $-$ and will be called $\Z_{2}-$\textit{sequences}. 

In this section, three types of Schur ring over $\Z_{2}^{n}$ are established: Hamming, circulant
orbists and decimated circulant orbits Schur ring.

\subsection{Hamming Schur ring}
Let $\omega(X)$ denote the Hamming weight of $X\in\Z_{2}^{n}$. Thus, $\omega(X)$ is the number of $+$ in any $\Z_{2}-$sequences $X$ of $\Z_{2}^{n}$. Now let $\G_{n}(k)$ be the subset of $\Z_{2}^{n}$ such that $\omega(X)=k$ for all $X\in\G_{n}(k)$, where $0\leq k\leq n$. 

We let $T_{i}=\G_{n}(n-i)$. It is straightforward to prove that the partition $S=\{\G_{n}(0),...,\G_{n}(n)\}$ is a partition of $\Z_{2}^{n}$. And also $S$ is an $S-$ring over 
$\Z_{2}^{n}$. From [9] it is know that
\begingroup\makeatletter\def\f@size{12}\check@mathfonts
\begin{equation}\label{producto}
\G_{n}(a)\G_{n}(b)=
\begin{cases}
\bigcup\limits_{i=0}^{a}\G_{n}(n-a-b+2i), & 0\leq a\leq\left\lfloor\dfrac{n}{2}\right\rfloor, a\leq b\leq n-a,\\
\bigcup\limits_{i=0}^{n-a}\G_{n}(a+b-n+2i), & \left\lfloor\dfrac{n}{2}\right\rfloor+1\leq a\leq n, n-a\leq b\leq a.  
\end{cases}
\end{equation}
\endgroup

The above $S$-ring is known as Hamming Schur ring and denoted by $S_{H}$. This Schur ring
is $(n+1)$-dimensional. The union of all sets $\G_{n}(2a)$ in $S_{H}$ will be called 
\textbf{the even partition} of $S_{H}$, and will be designated by $\E_{n}$. The 
\textbf{odd partition} $\Odd_{n}$ is defined analogously. The sets $\E_{2n}$ and $\Odd_{2n+1}$ are 
$S_{H}$-subgroups of $S_{H}$ of order $2^{2n-1}$ and $2^{2n}$, respectively.

\subsection{Circulant Orbit Schur ring}

Let $\left\langle C\right\rangle\leq Aut(\mathbb{Z}_{2}^{n})$ denote the cyclic permutation group of
order $n$, that is, $C(x_{i})=x_{(i+1) mod n}$. Let $X_{C}=Orb_{\left\langle C\right\rangle }X=\{C^{i}(X):C^{i}\in \left\langle C\right\rangle \}$. Therefore, $\left\langle C\right\rangle$
defines a partition in equivalent class on $\Z_{2}^{n}$ and this we will denote by $\Z_{2C}^{n}$.
The partition of $\Z_{2}^{n}$ given by $\left\langle C\right\rangle$ defines a Schur ring $\Z_{2C}^{n}$, denoted $S_{C}$, where each $X_{C}$ in $\Z_{2C}^{n}$ will be called \textbf{circulant basic set}. An $S$-set de $\Z_{2C}^{n}$ will be called circulant $S$-set. 

In general, $\left\vert X_{C}\right\vert\neq n$, $X\neq\pm(1,1,...,1)$. If 
$\left\vert X_{C}\right\vert=n$, we say that $\left\langle C\right\rangle$ acts freely on 
$X\in \Z_{2}^{n}$ and let $F(\Z_{2C}^{n})$ denote the $S$-set of all this. Similarly, denote 
$\widehat{F}(\Z_{2C}^{n})$ the $S$-set all $X_{C}$ such that $\left\langle C\right\rangle$ 
doesn't act freely on $X$. Let $\pm(1,1,...,1)$ be in both $F(\mathbb{Z}_{2C}^{n})$ and 
$\widehat{F}(\mathbb{Z}_{2C}^{n})$. Therefore, 
\begin{equation}\label{Z_libre}
\Z_{2C}^{n}=F(\Z_{2C}^{n})\cup \widehat{F}(\Z_{2C}^{n}).
\end{equation}
When $n=p$ is an odd prime number $\left\vert 
\G_{p}(a)\right\vert$ is divisible by $p$, $0\leq a\leq p$, therefore 
$\left\vert X_{C}\right\vert =p$ for all $X_{C}\in \Z_{2C}^{p}$ and $\widehat{F}(\Z_{2C}^{p})=\emptyset$. Let $\widehat{F}_{d}(\Z_{2C}^{n})$ denote the set 
\begin{equation}
\{X_{n/d,C}=(\overbrace{X,X,\dots,X}^{n/d})_{C}\mid X\in \Z_{2}^{d}\}, 
\end{equation}
where $d\mid n$. Then
\begin{equation}
\widehat{F}(\Z_{2C}^{n})=\bigcup_{d\mid n}\widehat{F}_{d}(\Z_{2C}^{n})
\end{equation}
and 
\begin{equation}
\widehat{F}_{d_{1}}(\Z_{2C}^{n})\widehat{F}_{d_{2}}(\Z_{2C}^{n})=\widehat{F}_{d_{1}d_{2}}(\Z_{2C}^{n}),
\end{equation}
where $d_{1},d_{2}\mid n$ and $d_{1}d_{n}<n$. If $d_{1}\mid d_{2}$, then
\begin{equation}
\widehat{F}_{d_{1}}(\Z_{2C}^{n})\widehat{F}_{d_{2}}(\Z_{2C}^{n})=\widehat{F}_{d_{2}}(\Z_{2C}^{n}).
\end{equation}
If $d_{1}=d_{2}$, then
\begin{equation}
\widehat{F}_{d_{1}}(\Z_{2C}^{n})\widehat{F}_{d_{1}}(\Z_{2C}^{n})=\widehat{F}_{d_{1}}(\Z_{2C}^{n}).
\end{equation}
If $d_{1}d_{2}=n$, 
\begin{equation}
\widehat{F}_{d_{1}}(\Z_{2C}^{n})\widehat{F}_{d_{2}}(\Z_{2C}^{n})=F(\Z_{2C}^{n}).
\end{equation}
Finally, if $d\mid n$, then
\begin{equation}
\widehat{F}_{d}(\Z_{2C}^{n})F(\Z_{2C}^{n})=F(\Z_{2C}^{n}).
\end{equation}

Then $\widehat{F}_{d}(\Z_{2C}^{n})$ is an $S_{C}$-subgroup of $S_{C}$ for all $d\vert n$. 
On the other hand, in [9] was shown that $X_{C}^{2}\not\subset F(\mathbb{Z}_{2C}^{n})$ 
only if $n$ is an even number. Also, $X_{C}Y_{C}\subset F(\mathbb{Z}_{2C}^{n})$ if $n$ is an
odd number. Thus, $F(\mathbb{Z}_{2C}^{n})$ is an $S_{C}$-subgroup in $S_{C}$ only if $n$ is an 
odd number.\\

Next, we calculate the dimension of Schur ring $S_{C}$ over $\Z_{2}^{n}$ when $n=p$ and $n=p^{k}$,
$p$ an odd prime
\begin{theorem}
If $S_{C}$ is a circulant orbit Schur ring in $\mathbb{Z}_{2}^{p}$, $p$ an odd prime, then
$S_{C}$ has dimension
\begin{equation}\label{dimension_circular_p}
\vert S_{C}\vert=\frac{2^{p}+2p-2}{p}
\end{equation}
\end{theorem}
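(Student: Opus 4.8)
The plan is to read off the dimension $|S_{C}|$ directly as the total number of basic sets of $S_{C}$: by construction these are exactly the orbits of $\left\langle C\right\rangle$ on $\Z_{2}^{p}$, and the spanning base $\{\overline{T_{0}},\dots,\overline{T_{r}}\}$ has one element per orbit (the singleton orbit of the identity playing the role of $T_{0}$). Hence the theorem is purely a counting statement: I would reduce it to counting the orbits of the cyclic shift group of order $p$ acting on the $2^{p}$ sequences of $\Z_{2}^{p}$, i.e. the binary necklaces of length $p$.

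First I would classify the orbits by size. Since $\left|\left\langle C\right\rangle\right|=p$ with $p$ prime, the orbit--stabilizer theorem forces every orbit size to divide $p$, so it is $1$ or $p$. An orbit has size $1$ exactly when its element is fixed by $C$, that is, when it is a constant sequence; there are precisely two such, namely $(+,\dots,+)$ (which is the identity $e$, hence $T_{0}$) and $(-,\dots,-)$. Every other sequence has trivial stabilizer, so $\left\langle C\right\rangle$ acts freely on it and its orbit has size $p$. This is exactly the fact already recorded in the excerpt, that $|X_{C}|=p$ for all non-constant $X_{C}\in\Z_{2C}^{p}$ (equivalently, $\widehat{F}(\Z_{2C}^{p})$ reduces to the two constants), which itself rests on $p\mid\binom{p}{a}$ for $1\le a\le p-1$.

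Then I would simply count. The non-constant sequences number $2^{p}-2$, and they are partitioned into free orbits of size $p$, giving $(2^{p}-2)/p$ such orbits; here Fermat's little theorem guarantees $p\mid 2^{p}-2$, so this is an integer. Adding the two singleton orbits yields
\[
\vert S_{C}\vert=\frac{2^{p}-2}{p}+2=\frac{2^{p}+2p-2}{p},
\]
as claimed. As a cross-check I would note that the Cauchy--Frobenius (Burnside) lemma produces the same value, since $\frac{1}{p}\sum_{d\mid p}\varphi(d)\,2^{p/d}=\frac{1}{p}\bigl(2^{p}+(p-1)\cdot 2\bigr)$.

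I do not expect a genuine obstacle here: the entire content is the size classification of the orbits, and that is immediate from the primality of $p$. The only point demanding a little care is the bookkeeping of the two constant sequences, making sure each is counted once as a singleton orbit (with $(+,\dots,+)=e$ serving as $T_{0}$) and is not conflated with the free orbits; this is precisely the convention fixed earlier in the excerpt by placing $\pm(1,\dots,1)$ in both $F(\Z_{2C}^{n})$ and $\widehat{F}(\Z_{2C}^{n})$.
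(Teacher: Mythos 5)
Your proposal is correct and is essentially the paper's own argument: the paper likewise counts two singleton orbits (the constant sequences) plus $\frac{1}{p}\sum_{a=1}^{p-1}\binom{p}{a}=\frac{2^{p}-2}{p}$ free orbits of size $p$, arriving at $2+\frac{2^{p}-2}{p}$. The only cosmetic difference is the justification that non-constant orbits have size $p$ — the paper invokes $p\mid\binom{p}{a}$ weight by weight, while you use orbit--stabilizer plus Fermat's little theorem, which is if anything the cleaner formulation of the same fact.
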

\begin{proof}
We know that $p\mid\binom{p}{a}$ for $1\leq a\leq p-1$. Thereby,
\begin{eqnarray*}
\vert S_{C}\vert&=&1+\frac{1}{p}\sum_{a=1}^{p-1}\binom{p}{a}+1=2+\frac{2^{p}-2}{p}
\end{eqnarray*}
and from here we will get the right result.
\end{proof}

\begin{theorem}
If $S_{C}$ is a circulant orbit Schur ring in $\mathbb{Z}_{2}^{p^{n}}$, $p$ an odd prime, then
$S_{C}$ has dimension
\begin{eqnarray}\label{dimension_circular_power_p}
\vert S_{C}\vert&=&2+\frac{1}{p^{n}}\sum_{\overset{(a,p^{n})=1}{a>1}}\binom{p^{n}}{a} \nonumber
+\sum_{i=1}^{n}\frac{1}{p^{n-i+1}}\sum_{\overset{a_{i}=1}{p\nmid a_{i}}}^{p^{n-i}-1}\left[\binom{p^{n-i+1}}{a_{i}p}-\binom{p^{n-i}}{a_{i}}\right]\nonumber\\
&&+\sum_{i=1}^{n}\frac{1}{p^{n-i+1}}\sum_{k=2}^{n-i}\sum_{a_{i}=1}^{p-1}\left[\binom{p^{n-i+1}}{a_{i}p^{k-i+1}}-\sum_{j=1}^{k}\binom{p^{n-j-i}}{a_{i}p^{k-j-i}}\right]
\end{eqnarray}
\end{theorem}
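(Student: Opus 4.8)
The plan is to identify $\vert S_{C}\vert$ with the number of orbits of $\langle C\rangle$ on $\Z_{2}^{p^{n}}$ and to evaluate this by Burnside's lemma, refined along the Hamming partition $\{\G_{p^{n}}(a)\}$ so that each weight class is handled separately. Since $\langle C\rangle$ is cyclic of order $p^{n}$, the sequences fixed by $C^{j}$ are exactly those of period dividing $\gcd(j,p^{n})=p^{v_{p}(j)}$; a sequence of period dividing $p^{i}$ is one length-$p^{i}$ block repeated $p^{n-i}$ times, so it lies in $\G_{p^{n}}(a)$ precisely when $p^{n-i}\mid a$ and the block has weight $a/p^{n-i}$. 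Counting the $j$ with $\gcd(j,p^{n})=p^{i}$ by $\varphi(p^{n-i})$, Burnside gives the number of orbits inside $\G_{p^{n}}(a)$ as the classical necklace count $\frac{1}{p^{n}}\sum_{d\mid\gcd(p^{n},a)}\varphi(d)\binom{p^{n}/d}{a/d}=\frac{1}{p^{n}}\sum_{m=0}^{v_{p}(a)}\varphi(p^{m})\binom{p^{n-m}}{a/p^{m}}$, using that $\gcd(p^{n},a)=p^{v_{p}(a)}$. Summing over $0\le a\le p^{n}$ yields $\vert S_{C}\vert$, and the whole proof consists in regrouping this double sum according to $v_{p}(a)$.

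First I would split off the two constant sequences, of weights $0$ and $p^{n}$: each is a singleton orbit, contributing the leading $2$. Next I treat the weights with $p\nmid a$, equivalently $(a,p^{n})=1$. For such $a$ only the $m=0$ term survives, because a nontrivial period $p^{i}<p^{n}$ would force $p^{n-i}\mid a$ and hence $p\mid a$; thus $\langle C\rangle$ acts freely and $\G_{p^{n}}(a)$ breaks into exactly $\binom{p^{n}}{a}/p^{n}$ orbits, all lying in $F(\Z_{2C}^{p^{n}})$. Summing over these weights gives the term $\frac{1}{p^{n}}\sum_{(a,p^{n})=1}\binom{p^{n}}{a}$, which together with the constant orbits reproduces the first two summands of the formula (the reduction to the displayed $n=1$ case, where it equals $2+\frac{2^{p}-2}{p}$, is a useful sanity check).

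The substantive contribution comes from the weights with $p\mid a$, where orbits of several sizes coexist. Here I would rewrite the per-weight count through the minimal-period description: because the divisors of $p^{n}$ form a chain $1\mid p\mid\cdots\mid p^{n}$, the number of length-$p^{s}$ blocks of weight $w$ whose minimal period is exactly $p^{s}$ is the single M\"obius difference $\binom{p^{s}}{w}-\binom{p^{s-1}}{w/p}$, and each such block spawns one orbit of size $p^{s}$ in $\Z_{2}^{p^{n}}$. Writing $s=n-i+1$ and the block weight as $w=a_{i}p^{t}$ with $p\nmid a_{i}$, the case $t=1$ produces exactly the difference $\binom{p^{n-i+1}}{a_{i}p}-\binom{p^{n-i}}{a_{i}}$, divided by the orbit size $p^{n-i+1}$ and summed over $p\nmid a_{i}$: this is the third term. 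For $t\ge 2$ the subtracted binomial $\binom{p^{s-1}}{a_{i}p^{t-1}}$ is itself expanded recursively down the chain, one power of $p$ at a time, and these cascading corrections telescope into the nested inner sum $\sum_{j=1}^{k}\binom{p^{n-j-i}}{a_{i}p^{k-j-i}}$ of the fourth term. The main obstacle is precisely this last bookkeeping: fixing the dictionary between the weight data $(a,s,w)$ and the indices $(i,k,a_{i})$ and verifying that the telescoped chain of binomials matches the fourth summand term for term. Once that dictionary is pinned down, the four pieces add to the asserted value of $\vert S_{C}\vert$, completing the count.
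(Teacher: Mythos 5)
Your proposal is correct and takes essentially the same route as the paper's own proof: both stratify $\Z_{2}^{p^{n}}$ by Hamming weight, treat weights coprime to $p$ as purely free orbits of size $p^{n}$ (plus the two constant sequences), and count the remaining orbits by subtracting repeated-block (shorter minimal period) sequences along the chain $1\mid p\mid\cdots\mid p^{n}$, dividing the telescoped binomial differences by the orbit size $p^{s}$. Your Burnside/necklace-formula step is just an equivalent repackaging of the paper's direct divisibility observations, and, like the paper (whose proof ends with ``account taken of the above, the result is obtained''), you leave the final dictionary between weight data and the displayed indices implicit.
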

\begin{proof}
First, we know that $p^{n}\mid\binom{p^{n}}{a}$ only if $(a,p^{n})=1$. Followed, we define
$\G_{n}^{m}(a)=\{(X,X,...,X):X\in\G_{n}(a)\}$. If $1\leq l\leq n$ and $p\nmid a$, then is clear 
that $\G_{p^{l}}(ap)\supset\G_{p^{l-1}}^{p}(a)$ and $p^{l}\mid\left[\binom{p^{l}}{ap}-\binom{p^{l-1}}{a}\right]$. Now, let $k,l$ be integer numbers such that $1\leq k<l\leq n$ and we make $s=ap^{k}$, $1\leq a\leq p-1$. Then $\G_{p^{l}}(s)=\G_{p^{l}}(ap^{k})\supset\G_{p^{l-1}}^{p}(ap^{k-1})\supset\G_{p^{l-2}}^{p^{2}}(ap^{k-2})\supset\cdots\supset\G_{p^{l-k}}^{p^{k}}(a)$. Then $p^{l}\mid\left[\binom{p^{l}}{ap^{k}}-\sum_{j=1}^{k}\binom{p^{l-j-1}}{ap^{k-j}}\right]$. Account taken of 
the above, the result is obtained.
\end{proof}

On the other hand, pick $X\in\mathbb{Z}_{2}^{n}$. Denote $RX$ the reversed sequence 
$RX = (x_{n-1},...,x_{1},x_{0})$. We will call $X_{C}$ symmetric if exists $Y\in X_{C}$ such that
$RY=Y$. In otherwise, we say it is nonsymmetric. Denote $Sym(\Z_{2C}^{n})$ the set of all $X_{C}$
symmetric and $\widehat{Sym}(\Z_{2C}^{n})$ the set of all $X_{C}$ nonsymmetric. Then the Schur 
ring $\Z_{2C}^{n}$ it can to express as 
\begin{equation}\label{Z_sym}
Z_{2C}^{n}=Sym(Z_{2C}^{n})\cup \widehat{Sym}(Z_{2C}^{n}).
\end{equation}
From (\ref{Z_libre}) and (\ref{Z_sym}) it follow that
\begin{eqnarray}
F(\Z_{2C}^{n})&=&Sym(F(\Z_{2C}^{n}))\cup \widehat{Sym}(F(\Z_{2C}^{n}))\\
\widehat{F}(\Z_{2C}^{n})&=&Sym(\widehat{F}(\Z_{2C}^{n}))\cup \widehat{Sym}(\widehat{F}(\Z_{2C}^{n}))\\
Sym(\Z_{2C}^{n})&=&F(Sym(\Z_{2C}^{n}))\cup \widehat{F}(Sym(\Z_{2C}^{n}))\\
\widehat{Sym}(\Z_{2C}^{n})&=&F(\widehat{Sym}(\Z_{2C}^{n}))\cup \widehat{F}(\widehat{Sym}(\Z_{2C}^{n})).
\end{eqnarray}
Hence
\begin{eqnarray}
\Z_{2C}^{n}&=&Sym(F(\Z_{2C}^{n}))\cup\widehat{Sym}(F(\Z_{2C}^{n}))\cup Sym(\widehat{F}(\Z_{2C}^{n}))\cup\widehat{Sym}(\widehat{F}(\Z_{2C}^{n})).
\end{eqnarray}

Now, we show that if $X_{C}\in Sym(F(\mathbb{Z}_{2C}^{n}))$, then $X_{C}^{2}\subset Sym(F(\mathbb{Z}_{2C}^{n}))$. By definition, there exists $Y$ in $X_{C}$ such that $RY=Y$. Suppose without loss of generality that $Y=X$. Also, is clear that $F(\mathbb{Z}_{2C}^{n})$ is an $S_{C}$-subgroup only if
$n$ is odd number. Then, only we need to prove that $(XC^{i}X)_{C}$ is symmetric for all $i$. But
it's easy to verify that $C^{(n-1)/2}(XCX)$ is symmetric. From here, it followed that 
$C^{k(n-1)/2}(XC^{k}X)$ is symmetric for all $k$. Therefore $X_{C}^{2}\subset Sym(F(\mathbb{Z}_{2C}^{n}))$. However, in general $Sym(F(\mathbb{Z}_{2C}^{n}))$ is not $S_{C}$-subgroup in $S_{C}$. 
For example, if $X=---+---$ and $Y=+-+++-+$, then $XC^{3}Y$ is not symmetric. Equally, in general $Sym(\widehat{F}_{d}(\mathbb{Z}_{2C}^{n}))$ is not an $S_{C}$-subgroup because if both $X$ and $Y$ have the preceding definition, then $(X,X,X)C^{3}((Y,Y,Y))$ is not symmetric.

\subsection{Decimated Circulant Orbit Schur ring}

Let $\delta_{k}\in S_{n-1}$ act on $X\in\Z_{2}^{n}$ by decimation, that is, $\delta_{k}(x_{i})=x_{ki}$ for all $x_{i}$ in $X$, $(k,n)=1$ and let $\Delta_{n}$ denote the set of this $\delta_{k}$. The set $\Delta_{n}$ is a group of order $\phi(n)$ isomorphic to $U(\Z_{n})$, the group the units of $\Z_{n}$, where $\phi$ is called the Euler totient function. We define 
\begin{equation}\label{decimated_schur_ring}
\Delta_{n}(\mathbb{Z}_{2C}^{n})=\{\Delta_{n}(X_{C}):X_{C}\in\mathbb{Z}_{2C}^{n}\}
\end{equation}
where
\begin{equation}
\Delta_{n}(X_{C})=\{(\delta_{r}X)_{C}:\delta_{r}\in\Delta_{n}\}
\end{equation}
The results about of the action of $\Delta_{n}$ on $\mathbb{Z}_{2C}^{n}$ are listed below
\begin{enumerate}
\item If $X_{C}\in\widehat{F}(\mathbb{Z}_{2C}^{n})$, then $\Delta_{n}(X_{C})\in\widehat{F}(\mathbb{Z}_{2C}^{n})$.
\item If $X_{C}\in F(\mathbb{Z}_{2C}^{n})$, then $\Delta_{n}(X_{C})\in F(\mathbb{Z}_{2C}^{n})$.
\item If $X_{C}\in Sym(\mathbb{Z}_{2C}^{n})$, then $\Delta_{n}(X_{C})\in Sym(\mathbb{Z}_{2C}^{n})$.
\item If $X_{C}\in\widehat{Sym}(\mathbb{Z}_{2C}^{n})$, then $\Delta_{n}(X_{C})\in\widehat{Sym}(\mathbb{Z}_{2C}^{n})$.
\end{enumerate}
Thus $\Delta_{n}$ define a partition on of Schur ring $\mathbb{Z}_{2C}^{n}$ that preserves both the
freeness and the symmetry. It is easy to show that $(\ref{decimated_schur_ring})$ define a Schur ring which we will call \textit{decimated circulant orbit Schur ring}

\begin{theorem}
Let $S_{D}$ denote $\Delta_{n}(\mathbb{Z}_{2C}^{n})$. Then $S_{D}$ is a Schur ring over 
$\mathbb{Z}_{2}^{n}$ of dimension
\begin{equation}\label{dimension_decimated}
\vert S_{D}\vert=\frac{1}{n\phi(n)}\sum_{i=0}^{n-1}\sum_{\overset{k=1}{(n,k)=1}}^{n-1}2^{C(k,t)}
\end{equation}
\end{theorem}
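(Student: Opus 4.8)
The plan is to realize $S_{D}$ as the orbit Schur ring of a single group of automorphisms of $\Z_{2}^{n}$ and then count its basic sets by Burnside's lemma. First I would observe that both the shift $C$ and every decimation $\delta_{k}$ act on $\Z_{2}^{n}$ by permuting coordinates, hence as automorphisms of the group $\Z_{2}^{n}$. The subgroup $G=\langle C,\Delta_{n}\rangle\le\mathrm{Aut}(\Z_{2}^{n})$ they generate is therefore a group of automorphisms, and the crucial structural point is that $S_{D}$ is exactly its orbit partition: applying $\Delta_{n}$ to a circulant basic set $X_{C}$ merely fuses those $\langle C\rangle$-orbits that lie in a common $G$-orbit, since $\Delta_{n}(X_{C})=\{(\delta_{r}X)_{C}\}$ has as its union precisely $\mathrm{Orb}_{G}(X)$.

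Next I would pin down the structure and order of $G$. Writing both maps on indices over $\Z_{n}$, namely $C:i\mapsto i+1$ and $\delta_{k}:i\mapsto ki$, a direct computation gives the conjugation relation $\delta_{k}C\delta_{k}^{-1}=C^{k}$, so $\langle C\rangle$ is normal in $G$ and $G=\langle C\rangle\rtimes\Delta_{n}$ is the one-dimensional affine group $\mathrm{AGL}(1,\Z_{n})$ of order $n\phi(n)$, with elements the affine permutations $\sigma_{k,i}:x\mapsto kx+i$, $k\in U(\Z_{n})$, $i\in\Z_{n}$. Because $G$ is a group of automorphisms fixing the identity $e=(+,\dots,+)$, and because every element of $\Z_{2}^{n}$ is its own inverse (so condition (2) of the Schur-ring definition is automatic), its orbit partition satisfies the three axioms; this is the standard automorphic (orbit) Schur ring construction, and it establishes that $S_{D}$ is indeed a Schur ring over $\Z_{2}^{n}$.

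For the dimension I would invoke Burnside's lemma: $\vert S_{D}\vert$ equals the number of $G$-orbits, that is $\tfrac{1}{\vert G\vert}\sum_{\sigma\in G}\vert\mathrm{Fix}(\sigma)\vert$. A sequence $X\in\Z_{2}^{n}$ is fixed by the automorphism induced by $\sigma_{k,i}$ if and only if $X$ is constant on each cycle of the permutation $\sigma_{k,i}$ of $\Z_{n}$, so $\vert\mathrm{Fix}(\sigma_{k,i})\vert=2^{C(k,i)}$, where $C(k,i)$ denotes the number of cycles of $x\mapsto kx+i$. Summing over the $n$ translations $i\in\{0,\dots,n-1\}$ and the $\phi(n)$ units $k$ (i.e.\ $1\le k\le n-1$ with $(n,k)=1$) and dividing by $\vert G\vert=n\phi(n)$ yields the stated formula.

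The counting step is routine once the cycle interpretation of fixed points is in place; the delicate part is the structural claim that the $\Delta_{n}$-refinement of $S_{C}$ coincides with the $G$-orbit partition and is genuinely a Schur ring. This hinges on the normalizing relation $\delta_{k}C\delta_{k}^{-1}=C^{k}$, which is what guarantees that $G$ is a group of the asserted order $n\phi(n)$ and that $S_{D}$-classes are unions of circulant basic sets closed under products and inverses. Verifying the multiplicative closure $\overline{T_{i}}\,\overline{T_{j}}=\sum_{k}\lambda_{ijk}\overline{T_{k}}$ directly from the orbit structure, rather than merely quoting the automorphic construction, is where I expect the main care to be needed.
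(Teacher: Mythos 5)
Your proposal is correct and takes essentially the same route as the paper: the paper's own proof also rests on the observation that $\langle C\rangle\Delta_{n}$ is a group (your $G=\langle C\rangle\rtimes\Delta_{n}$ of order $n\phi(n)$) whose orbits on $\mathbb{Z}_{2}^{n}$ are precisely the unions of the classes $\Delta_{n}(X_{C})$, so that $\vert S_{D}\vert$ is the number of such orbits. The only difference is that the paper defers the orbit count to the reference [12] (Titsworth), whereas you derive it directly via Burnside's lemma with the fixed-point count $2^{C(k,i)}$ for the affine map $x\mapsto kx+i$ --- a self-contained argument which also makes clear that the index $t$ in the paper's displayed formula should read $i$.
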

\begin{proof}
It is clear that $\Z_{2C}^{n}$ ia a Schur ring. The remainder part of the proof follows by bearing 
in mind that $\left\langle C\right\rangle\Delta_{n}$ is a group. Note also that
$\Delta_{n}(X_{C})=\{(x_{ri+j})_{i=0}^{n-1}:0\leq j\leq n-1,\ (r,n)=1\}$, that is to say, 
$\Delta_{n}(X_{C})$ contains equivalent sequences under the action of $\left\langle C\right\rangle\Delta_{n}$. Hence the dimension of $S_{D}$ corresponds with the number of classes
$\Delta_{n}(X_{C})$. For the proof of (\ref{dimension_decimated}) see [12].
\end{proof}

\section{Correspondence between Composition of Integers and Circulant Orbit Schur Ring}

Let $\mathbb{N}$ be the set of all positive integers. A composition of $n\in\N$ is any sequence
$c_{1}c_{2}\cdots c_{m}$ of positive integers such that $\sum_{i=1}^{m}c_{i}=n$. The $c_{i}$ are called the parts and $m$ denote the number of parts or lenght of the composition. In this paper we
will denote with $\mathrm{C}(n)$ the set of composition of $n$ and $\mathrm{C}_{r}(n)$ the set of composition of $n$ with lenght $r$. We wish to establish a correspondence between compositions 
of integers and ciculant orbit Schur ring. Let $X$ be a $\Z_{2}$-sequence. The \textit{run vector} 
of $X$ is defined as a string of consecutives run of $+$ and $-$ with total lenght $k$, denoted 
by $R_{k}$. For instance, if $X=+++--+-+-$, then the run vector of $X$ is $R_{3}R_{2}R_{1}R_{1}R_{1}R_{1}=(3,2,1,1,1,1)$. Also, $X_{C}$ contains the following run vector 
\begin{eqnarray*}
&&(3,2,1,1,1,1),(2,2,1,1,1,1,1),(1,2,1,1,1,1,2),(2,1,1,1,1,3),\\
&&(1,1,1,1,1,3,1),(1,1,1,1,3,2),(1,1,1,3,2,1),(1,1,3,2,1,1),\\
&&(1,3,2,1,1,1).
\end{eqnarray*}
We define the length of a circulant $X_{C}$, denote $l(X_{C})$, as the minimum lenght of the 
run vector in $X_{C}$. It is clear that $l(X_{C})$ is an even number so we are interested in the
following definitions

\begin{definition}
Let $X=(x_{1},...,x_{r})\in\mathrm{C}_{r}(n)$ and let $Y=(y_{1},...,y_{r})\in\mathrm{C}_{r}(m)$. We define the \textbf{r-alternating product} $\diamond_{r}$ of $X$ and $Y$ as
\begin{eqnarray*}
X\diamond_{r}Y&=&(x_{1},...,x_{r})\diamond_{r}(y_{1},...,y_{r})\\
&=&(x_{1},y_{1},...,x_{r},y_{r})
\end{eqnarray*}
The collection of all $X\diamond_{r}Y$ is denoted by $\mathrm{C}_{r}(n)\diamond_{r}\mathrm{C}_{r}(m)$.
\end{definition}

\begin{definition}
Pick $X$ in $\G_{n}(a)$. We will say that $X$ is in \textbf{partitioned form} if either $X=(p_{1},m_{1},...,p_{r},m_{r})\in\mathrm{C}_{r}(a)\diamond_{r}\mathrm{C}_{r}(n-a)$ or $X=(m_{1},p_{1},...,m_{r},p_{r})\in \mathrm{C}_{r}(n-a)\diamond_{r}\mathrm{C}_{r}(a)$, with $p_{1}+\cdots+p_{r}=a$ and $m_{1}+\cdots+m_{r}=n-a$.
\end{definition}

Now, when $X$ is in partitioned form we will denoted their length as $l(X)$. Let $X$ be as defined above. Then $l(X)=6$ and $X_{C}$ contains the run vectors $(3,1,1)\diamond_{3}(2,1,1),
(2,1,1)\diamond_{3}(1,1,3),(1,1,3)\diamond_{3}(1,1,2),(1,1,2)\diamond_{3}(1,3,1),(1,2,1)\diamond_{3}(3,1,1)$. On the other hand, let $\left\langle c\right\rangle$ be a group acting cyclically over 
$\mathrm{C}_{r}(n)$, thus, $\left\langle c\right\rangle\leq Aut(\mathrm{C}_{r}(n))$ and let 
$\left\langle\overline{C}\right\rangle\leq Aut(\mathrm{C}_{r}(n)\diamond_{r}\mathrm{C}_{r}(m))$.
Then $\left\langle c\right\rangle$ and $\left\langle\overline{C}\right\rangle$ are linked as follows

\begin{proposition}
Let $P\in\mathrm{C}_{r}(n)$ and $Q\in\mathrm{C}_{r}(m)$. Then
\begin{enumerate}
\item $\overline{C}^{2i}(P\diamond_{r}Q)=c^{i}P\diamond_{r}c^{i}Q$
\item $\overline{C}^{2i+1}(P\diamond_{r}Q)=c^{i}Q\diamond_{r}c^{i+1}P$.
\end{enumerate}
\end{proposition}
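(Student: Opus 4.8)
The plan is to reduce both identities to a single one-step computation and then iterate. First I would fix the conventions implicit in the statement. Writing $P\diamond_r Q=(p_1,q_1,\dots,p_r,q_r)$ as a sequence of length $2r$, the generator $\overline{C}$ acts as the cyclic left shift by one entry (the length-$2r$ analogue of the map $C(x_i)=x_{(i+1)\bmod n}$ on $\Z_2^n$, which sends $(x_0,\dots,x_{n-1})$ to $(x_1,\dots,x_{n-1},x_0)$), while $c$ is the cyclic left shift on $\mathrm{C}_r$, i.e. $c(p_1,\dots,p_r)=(p_2,\dots,p_r,p_1)$. With these (mutually compatible) conventions the content of the proposition is that one application of $\overline{C}$ changes the \emph{phase} of the interleaving, swapping the roles of the two factors and cyclically rotating exactly one of them, whereas two applications rotate both factors once.

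The key step is the base identity $\overline{C}(P\diamond_r Q)=Q\diamond_r cP$, valid for all $P\in\mathrm{C}_r(n)$ and $Q\in\mathrm{C}_r(m)$; note that this is precisely the $i=0$ instance of item (2). I would prove it by direct inspection of coordinates: if $s=(s_1,\dots,s_{2r})=P\diamond_r Q$, so that $s_{2j-1}=p_j$ and $s_{2j}=q_j$, then the left shift produces $t$ with $t_k=s_{k+1}$ for $k<2r$ and $t_{2r}=s_1$. Reading off $t_{2j-1}=s_{2j}=q_j$ and $t_{2j}=s_{2j+1}=p_{j+1}$ (with $t_{2r}=s_1=p_1$) gives $t=(q_1,p_2,q_2,p_3,\dots,q_r,p_1)$, which in partitioned form is exactly $Q\diamond_r(p_2,\dots,p_r,p_1)=Q\diamond_r cP$.

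From the base identity both statements follow formally. Applying it twice (the second time to the factors in the opposite order) yields $\overline{C}^2(P\diamond_r Q)=\overline{C}(Q\diamond_r cP)=cP\diamond_r cQ$, and more generally $\overline{C}^2(A\diamond_r B)=cA\diamond_r cB$ for any $A,B$ of equal length. Item (1) is then an immediate induction on $i$: assuming $\overline{C}^{2i}(P\diamond_r Q)=c^iP\diamond_r c^iQ$, one further double shift advances each factor by $c$, giving $\overline{C}^{2(i+1)}(P\diamond_r Q)=c^{i+1}P\diamond_r c^{i+1}Q$. Finally, item (2) follows by composing item (1) with the base identity: $\overline{C}^{2i+1}(P\diamond_r Q)=\overline{C}(c^iP\diamond_r c^iQ)=c^iQ\diamond_r c(c^iP)=c^iQ\diamond_r c^{i+1}P$.

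The only genuine obstacle is bookkeeping: pinning down the shift directions so that $\overline{C}$ and $c$ are compatible, and checking that the single left shift really lands in the stated partitioned form. In particular one must verify that the lone surviving entry $p_1$ migrates to the final coordinate, which is exactly what produces $c^{i+1}P$ (rather than $c^iP$) in item (2). Everything past the base case is a one-line induction, and no property of compositions beyond their being length-$r$ sequences is used, so the argument is purely combinatorial index-chasing.
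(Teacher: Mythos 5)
Your proof is correct, and it reaches the stated identities by a somewhat different organization than the paper. The paper proves the proposition in one shot: it writes out the effect of shifting the interleaved string $(p_1,q_1,\dots,p_r,q_r)$ by $2i$ (resp.\ $2i+1$) places and then re-reads the result in partitioned form as $c^iP\diamond_r c^iQ$ (resp.\ $c^iQ\diamond_r c^{i+1}P$); there is no induction, just a direct multi-index computation. You instead isolate the single-shift identity $\overline{C}(P\diamond_r Q)=Q\diamond_r cP$ as a base lemma, deduce $\overline{C}^{2}(A\diamond_r B)=cA\diamond_r cB$ by applying it twice, and then get item (1) by induction on $i$ and item (2) by composing item (1) with the base lemma. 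The mathematical content is the same in both cases (coordinate chasing on the interleaving), but your factorization confines all index bookkeeping to a single one-step verification, which makes the even/odd alternation structurally transparent and avoids the off-by-one hazards to which the paper's direct computation is exposed (its displayed formula $\overline{C}^{2i}(P\diamond_rQ)=(p_i,q_i,\dots,p_{i-1},q_{i-1})$ is loose about whether the leading entry should be $p_i$ or $p_{i+1}$, a point your explicit convention-fixing handles cleanly). The paper's version is shorter on the page; yours is easier to verify and generalizes verbatim to any pair of equal-length strings, since, as you note, no property of compositions is ever used.
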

\begin{proof}
Pick $P=(p_{1},p_{2},...,p_{r})\in\mathrm{C}_{r}(n)$ and $Q=(q_{1},q_{2},...,q_{r})\in\mathrm{C}_{r}(m)$. Then $P\diamond_{r}Q=(p_{1},q_{1},p_{2},q_{2},...,p_{r},q_{r})$ and
\begin{eqnarray*}
\overline{C}^{2i}(P\diamond_{r}Q)&=&(p_{i},q_{i},...,p_{r},q_{r},...,p_{i-1},q_{i-1})\\
&=&(p_{i},...,p_{r},...,p_{i-1})\diamond_{r}(q_{i},...,q_{r},...,q_{i-1})\\
&=&c^{i}P\diamond_{r}c^{i}Q
\end{eqnarray*}
and
\begin{eqnarray*}
\overline{C}^{2i+1}(P\diamond_{r}Q)&=&(q_{i},...,p_{r},q_{r},...,p_{i-1},q_{i-1},p_{i})\\
&=&(q_{i},...,q_{r},...,q_{i-1})\diamond_{r}(p_{i+1},...,p_{r},...,p_{i})\\
&=&c^{i}Q\diamond_{r}c^{i+1}P.
\end{eqnarray*}
as we hoped.
\end{proof}

The group $\left\langle\overline{C}\right\rangle$ define a partition over 
$\mathrm{C}_{r}(n)\diamond_{r}\mathrm{C}_{r}(m)$ and let $X_{\overline{C}}$ denote each 
equivalence class. If either $X\in\mathrm{C}_{r}(a)\diamond_{r}\mathrm{C}_{r}(n-a)$ or $X\in\mathrm{C}_{r}(n-a)\diamond_{r}\mathrm{C}_{r}(a)$, then $X_{\overline{C}}$ corresponds to $X_{C}$.
Hence the circulant orbit Schur ring $S_{C}$ in $\Z_{2}^{n}$ is characterized by the compositions of
$n$.

\section{Autocorrelation and Run String of Binary Sequences}

This section shows the relationship between the run string of a binary sequences and their
autocorrelacion. In this way, the reason for wanting to characterize circulant orbit Schur ring 
$S_{C}$ in $\Z_{2}^{n}$ is clear. For a periodical $\mathbb{Z}_{2}$-sequence $X=(x_{0},x_{1},...,x_{n-1})$ with period $n$, the \textit{periodic autocorrelation} at shift $k$ is the number defined by

\begin{equation*}
\mathsf{P}_{X}(k)=\sum_{i=0}^{n-1}x_{i}x_{i+k}.
\end{equation*}
It is well known that
\begin{eqnarray*}
\mathsf{P}_{X}(k)&=&\mathsf{P}_{X}(n-k),\\
\mathsf{P}_{X}(k)&\equiv& n \mod4,
\end{eqnarray*}
for $0\leq k\leq n-1$. It is obvious that $\mathsf{P}_{X}(0)=n$ and it is called 
\textit{trivial} autocorrelation value. $\mathsf{P}_{X}(k)$, $1\leq k\leq n-1$ are 
called \textit{nontrivial} autocorrelation values. Also, $\mathsf{P}_{C^{i}X}(k)=\mathsf{P}_{X}(k)$
for $1\leq i\leq n-1$. So we deal with class $X_{C}$ in the Schur ring $\mathbb{Z}_{2C}^{n}$ 
instead of binary sequences. With $\mathsf{P}_{X}(k)$ we refer to autocorrelation value of $X_{C}$.

On the other hand, let $\mathcal{R}$ be a collection of run string types and let $\mathcal{N}_{X}(\mathcal{R})$ denote the number of the runs string of $X$ which have a particular type in 
$\mathcal{R}$. In [11] can be found the following result which relates the autocorrelation of a
binary sequence with their run structure

\begin{theorem}
For any binary sequence $X$ with period $n$ and $k=1,2,3,...,n$,
\begin{equation}\label{equation_run}
\mathsf{P}_{X}(k)=n-2k\cdot l(X)-4\sum_{i_{1}+\cdots+i_{r}<k}(-1)^{r}(k-i)\mathcal{N}_{X}(R_{i_{1}}\cdots R_{i_{r}})
\end{equation}
where $i=i_{1}+i_{2}+\cdots+i_{r}$ and $l(X)$ is the lenght of run of $X$.
\end{theorem}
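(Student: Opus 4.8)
The plan is to exploit the telescoping identity $x_i x_{i+k} = \prod_{j=i}^{i+k-1} x_j x_{j+1}$, which converts the autocorrelation into a sum over run boundaries. Writing $f_j = \tfrac{1}{2}(1 - x_j x_{j+1}) \in \{0,1\}$ for the indicator that a run boundary sits between positions $j$ and $j+1$ (all indices mod $n$), each factor becomes $x_j x_{j+1} = 1 - 2 f_j$, so that $\mathsf{P}_X(k) = \sum_{i=0}^{n-1}\prod_{j=0}^{k-1}(1 - 2 f_{i+j})$. Expanding the product over subsets $S \subseteq \{0,1,\dots,k-1\}$ gives $\mathsf{P}_X(k) = \sum_{S}(-2)^{|S|} A_S$, where $A_S = \sum_{i=0}^{n-1}\prod_{j\in S} f_{i+j}$ counts the starting positions $i$ for which all the gaps $i+j$, $j \in S$, are run boundaries. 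The whole proof then amounts to reorganising this subset expansion by run pattern.

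First I would dispose of the low-order terms. The empty set contributes $A_\emptyset = n$. Each singleton $S = \{j\}$ contributes $A_S = \sum_i f_i$, the total number of run boundaries; since a cyclic binary word has exactly one boundary per run, this equals $l(X)$. As there are $k$ singletons, each carrying weight $-2$, the terms with $|S|\le 1$ sum to $n - 2k\,l(X)$, matching the first two terms of the claim.

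For $|S| = s+1 \ge 2$ I would reduce $A_S$ to a count of consecutive-run patterns. Since $A_S$ depends only on the successive gaps $d_\ell = j_\ell - j_{\ell-1}$ of $S = \{j_0 < \dots < j_s\}$, the substitution $g = i + j_0$ yields $A_S = \#\{g\ \text{a boundary}: g+d_1,\ g+d_1+d_2,\ \dots\ \text{all boundaries}\}$. Reading off the runs that lie between consecutive marked boundaries expresses $A_S$ as a sum of terms $\mathcal{N}_X(R_{i_1}\cdots R_{i_r})$, one for every way of writing each $d_\ell$ as an ordered sum of consecutive run lengths; on concatenating these compositions, a fixed pattern $R_{i_1}\cdots R_{i_r}$ with $i = i_1 + \cdots + i_r$ occurs exactly when its $r$ parts are grouped into $s$ consecutive blocks with prescribed block sums $d_1,\dots,d_s$. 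Because the parts are positive, distinct groupings give distinct gap-sequences, so no pattern is counted twice within a given $s$. For each such grouping the number of subsets $S$ inside $\{0,\dots,k-1\}$ realizing those gaps is $k-i$ (the choices of $j_0$), independent of the grouping, and the number of groupings into $s$ blocks is $\binom{r-1}{s-1}$.

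Collecting the coefficient of $\mathcal{N}_X(R_{i_1}\cdots R_{i_r})$ across all $s$ then gives $(k-i)\sum_{s=1}^{r}(-2)^{s+1}\binom{r-1}{s-1} = 4(k-i)\,(1-2)^{r-1} = -4(-1)^r(k-i)$ by the binomial theorem, which is exactly the weight in the asserted formula; the requirement $k-i\ge 1$ reproduces the summation range $i_1+\cdots+i_r<k$. The main obstacle is the bookkeeping of the previous paragraph: one must verify, respecting the cyclic indexing, that the passage from the subset expansion to the pattern counts $\mathcal{N}_X$ assigns to each pattern its contribution once and only once, with the placement multiplicity $k-i$ correctly decoupled from the block-splitting count $\binom{r-1}{s-1}$. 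Once that correspondence is pinned down, the binomial collapse is immediate and the theorem follows.
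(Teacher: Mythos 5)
You should know first that the paper does not actually prove this theorem: it is imported verbatim from Cai [11] (``In [11] can be found the following result\dots''), so there is no internal proof to compare against, and your argument stands as a self-contained derivation of a result the paper only cites. On its merits, your proof is correct. The telescoping identity $x_ix_{i+k}=\prod_{j=i}^{i+k-1}x_jx_{j+1}$, the substitution $x_jx_{j+1}=1-2f_j$, and the subset expansion give $\mathsf{P}_X(k)=\sum_{S\subseteq\{0,\dots,k-1\}}(-2)^{|S|}A_S$ exactly as you say; the $|S|\le 1$ terms yield $n-2k\,l(X)$ because a cyclic nonconstant word has one boundary per run; and your reduction of $A_S$ for $|S|=s+1\ge 2$ is the right key lemma: $A_S=\sum\mathcal{N}_X(R_{i_1}\cdots R_{i_r})$, summed over compositions $(i_1,\dots,i_r)$ refining the gap sequence $(d_1,\dots,d_s)$ of $S$, which is where the maximality of runs (interior positions of a run are non-boundaries) gets used. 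The bookkeeping you flag as the main obstacle does go through: pairs (grouping of $(i_1,\dots,i_r)$ into $s$ consecutive blocks, choice of $j_0$) are in bijection with pairs (subset $S$, refinement witness), since a grouping is recoverable from its cut points $D_1<\cdots<D_{s-1}$; the placement count is $k-i$ because $j_0$ ranges over $\{0,\dots,k-1-i\}$, the grouping count is $\binom{r-1}{s-1}$, and $\sum_{s=1}^{r}(-2)^{s+1}\binom{r-1}{s-1}=4(-1)^{r-1}=-4(-1)^{r}$ closes the computation, with the constraint $D_s\le k-1$ reproducing the range $i_1+\cdots+i_r<k$. One can confirm the outcome against the paper's displayed expansions of $\mathsf{P}_X(k)$ for $k\le 5$. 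The only caveat is the degenerate constant sequence, which has a run but no boundary; the theorem holds there only under your reading of $l(X)$ as the number of run boundaries, so your proof is, if anything, more careful than the statement itself.
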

For $k=1,2,3,4,5$ we obtain the following autocorrelation values $\mathsf{P}_{X}(k)$
\begin{eqnarray*}
\mathsf{P}_{X}(1)&=&n-2l(X),\\
\mathsf{P}_{X}(2)&=&n-4l(X)+4\mathcal{N}_{X}(R_{1}),\\
\mathsf{P}_{X}(3)&=&n-6l(X)+8\mathcal{N}_{X}(R_{1})+4\mathcal{N}_{X}(R_{2})-4\mathcal{N}_{X}(R_{1}R_{1}),\\
\mathsf{P}_{X}(4)&=&n-8l(X)+12\mathcal{N}_{X}(R_{1})+8\mathcal{N}_{X}(R_{2})+4\mathcal{N}_{X}(R_{3})-8\mathcal{N}_{X}(R_{1}R_{1})\\
&&-4\mathcal{N}_{X}(R_{1}R_{2})-4\mathcal{N}_{X}(R_{2}R_{1})+4\mathcal{N}_{X}(R_{1}R_{1}R_{1}),\\
\mathsf{P}_{X}(5)&=&n-10l(X)+16\mathcal{N}_{X}(R_{1})+12\mathcal{N}_{X}(R_{2})+8\mathcal{N}_{X}(R_{3})+4\mathcal{N}_{X}(R_{4})\\
&&-12\mathcal{N}_{X}(R_{1}R_{1})-8\mathcal{N}_{X}(R_{1}R_{2})-8\mathcal{N}_{X}(R_{2}R_{1})+8\mathcal{N}_{X}(R_{1}R_{1}R_{1})\\
&&-4\mathcal{N}_{X}(R_{1}R_{3})-4\mathcal{N}_{X}(R_{3}R_{1})-4\mathcal{N}_{X}(R_{2}R_{2})+4\mathcal{N}_{X}(R_{1}R_{1}R_{2})\\
&&+4\mathcal{N}_{X}(R_{1}R_{2}R_{1})+4\mathcal{N}_{X}(R_{2}R_{1}R_{1})-4\mathcal{N}_{X}(R_{1}R_{1}R_{1}R_{1})
\end{eqnarray*}

It is easy to notice a pattern in the above values, so the equation (\ref{equation_run}) can 
be expressed as follows:
\begin{eqnarray}\label{run-new}
\mathsf{P}_{X}(k)&=&n-2kl(X)+4(k-1)\mathcal{N}_{X}(R_{1})\\
&&+\sum_{i=2}^{k-1}4(k-i)\left(\mathcal{N}_{X}(R_{i})-\sum_{\overset{i_{1}+\cdots+i_{r}=i}{1<r\leq i}}(-1)^{r}\mathcal{N}_{X}(R_{i_{1}}\cdots R_{i_{r}})\right)\nonumber
\end{eqnarray}
for all $k\geq3$. The proof it is easy. For $k\geq3$ we have by (\ref{equation_run})
\begin{eqnarray*}
\mathsf{P}_{X}(k)&=&n-2kl(X)-4\sum_{i_{1}+\cdots+i_{r}<k}(-1)^{r}(k-i)\mathcal{N}_{X}(R_{i_{1}}\cdots R_{i_{r}})\\
&=&n-2kl(X)-4\left(\sum_{i=1}^{k-1}\sum_{i_{1}+\cdots+i_{r}=i}(-1)^{r}(k-i)\mathcal{N}_{X}(R_{i_{1}}\cdots R_{i_{r}})\right)\\
&=&n-2kl(X)+4(k-1)\mathcal{N}_{X}(R_{1})+\\
&&4\left(\sum_{i=2}^{k-1}(k-i)\sum_{i_{1}+\cdots+i_{r}=i}(-1)^{r+1}\mathcal{N}_{X}(R_{i_{1}}\cdots R_{i_{r}})\right)\\
&=&n-2kl(X)+4(k-1)\mathcal{N}_{X}(R_{1})+\\
&&4\sum_{i=2}^{k-1}(k-i)\left(\mathcal{N}_{X}(R_{i})-\sum_{\overset{i_{1}+\cdots+i_{r}=i}{1<r\leq i}}(-1)^{r}\mathcal{N}_{X}(R_{i_{1}}\cdots R_{i_{r}})\right)
\end{eqnarray*}
as we wanted.

We will say that $X_{C}$ has \textit{2-level autocorrelation values} if all nontrivial
autocorrelation values are equal to some constant $d$. In the theorem 5.6 of [11] a 
characterization on Hadamard circulant matrices was obtained. We generalize this theorem 
for sequence with $2$-level autocorrelation values

\begin{theorem}\label{caracterizacion}
If $X_{C}$ has 2-level autocorrelation values with nontrivial autocorrelation values $d$, then
\begin{eqnarray}\label{numero_run_k}
l(X)&=&\frac{n-d}{2},\nonumber\\
\mathcal{N}_{X}(R_{1})&=&\frac{n-d}{4},\nonumber\\
\mathcal{N}_{X}(R_{k})&=&\sum_{\overset{i_{1}+\cdots+i_{r}=k}{1<r\leq k}}(-1)^{r}\mathcal{N}_{X}(R_{i_{1}}\cdots R_{i_{r}}),
\end{eqnarray}
with $n\equiv d\mod4$ and $2\leq k\leq n-2$.
\end{theorem}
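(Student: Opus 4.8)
The plan is to exploit the hypothesis that every nontrivial autocorrelation value equals $d$, i.e.\ $\mathsf{P}_{X}(k)=d$ for all $1\leq k\leq n-1$, and to feed this into the run formula (\ref{run-new}). First I would read off the two lowest shifts directly. Setting $\mathsf{P}_{X}(1)=n-2l(X)=d$ gives $l(X)=\frac{n-d}{2}$, and then $\mathsf{P}_{X}(2)=n-4l(X)+4\mathcal{N}_{X}(R_{1})=d$ together with the value just found forces $\mathcal{N}_{X}(R_{1})=\frac{n-d}{4}$. This already yields the first two asserted identities; incidentally, since $\mathsf{P}_{X}(k)\equiv n\pmod 4$ for every $k$ while $\mathsf{P}_X(k)=d$, one also reads off the congruence $n\equiv d\pmod 4$.

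For the third family of identities I would substitute the two values $l(X)=\frac{n-d}{2}$ and $\mathcal{N}_X(R_1)=\frac{n-d}{4}$ into (\ref{run-new}). The key point is that the part of (\ref{run-new}) not involving the higher run-counts, namely $n-2kl(X)+4(k-1)\mathcal{N}_X(R_1)$, collapses to a constant: it equals $n-k(n-d)+(k-1)(n-d)=d$ independently of $k$. Writing $A_i=\mathcal{N}_X(R_i)-\sum_{i_1+\cdots+i_r=i,\,1<r\leq i}(-1)^r\mathcal{N}_X(R_{i_1}\cdots R_{i_r})$ for the quantity whose vanishing is to be proved, formula (\ref{run-new}) then reduces to $\mathsf{P}_X(k)=d+4\sum_{i=2}^{k-1}(k-i)A_i$ for every $3\leq k\leq n-1$. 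Since $\mathsf{P}_X(k)=d$ by hypothesis, this produces the homogeneous linear relations $\sum_{i=2}^{k-1}(k-i)A_i=0$ for all $3\leq k\leq n-1$.

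The last step is to invert this system. These relations form a lower-triangular system in the unknowns $A_2,\dots,A_{n-2}$: the relation at shift $k$ is $A_{k-1}+\sum_{i=2}^{k-2}(k-i)A_i=0$, whose highest-index term $A_{k-1}$ carries coefficient $1$. Hence $k=3$ gives $A_2=0$ (the residual sum being empty), and an easy induction on $k$ shows that once $A_2=\cdots=A_{k-2}=0$ the relation at shift $k$ forces $A_{k-1}=0$. Letting $k$ run up to $n-1$ yields $A_j=0$ for all $2\leq j\leq n-2$, which is precisely the claimed recursion $\mathcal{N}_X(R_k)=\sum_{i_1+\cdots+i_r=k,\,1<r\leq k}(-1)^r\mathcal{N}_X(R_{i_1}\cdots R_{i_r})$. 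I expect the only delicate points to be purely bookkeeping: verifying the clean cancellation to $d$ in the polynomial-in-$k$ part, and checking that the triangular system really is unitriangular, so that the induction closes over the full range $2\leq k\leq n-2$.
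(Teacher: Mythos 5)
Your proposal is correct and follows essentially the same route as the paper's own proof: both extract $l(X)$ and $\mathcal{N}_{X}(R_{1})$ from shifts $1$ and $2$ of (\ref{run-new}), observe that the part $n-2kl(X)+4(k-1)\mathcal{N}_{X}(R_{1})$ collapses to $d$, and then induct so that at each shift only the top-index quantity $A_{k}$ survives with a unit (up to the factor $4$) coefficient, forcing it to vanish. Your packaging of the induction as inverting a unitriangular homogeneous system is merely a restatement of the paper's inductive step, not a different argument.
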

\begin{proof}
$l(X)$ is easily obtained by replacing $\mathsf{P}_{X}(1)$ with $d$ in (\ref{run-new}), for $k=1$.
Making $k=2$ we obtain $\mathcal{N}_{X}(R_{1})$ by replacing $\mathsf{P}_{X}(1)$ with $d$ and by using the above result for $l(X)$. Equally, $\mathcal{N}_{X}(R_{2})$ is obtained by making 
$\mathsf{P}_{X}(3)=d$ and from the values of $l(X)$ and $\mathcal{N}_{X}(R_{1})$. Now, suppose 
we have calculated the first values $k$ of $\mathsf{P}_{X}$. Hence (\ref{numero_run_k}) is true
for $k-1$. From (\ref{run-new}) we have
\begin{eqnarray*}
d&=&\mathsf{P}_{X}(k+1)\\
&=&n-2(k+1)\left(\frac{n-d}{2}\right)+4k\left(\frac{n-d}{4}\right)\\
&&+\sum_{i=3}^{k}4(k+1-i)\left(\mathcal{N}_{X}(R_{i})-\sum_{\overset{i_{1}+\cdots+i_{r}=i}{1<r\leq i}}(-1)^{r}\mathcal{N}_{X}(R_{i_{1}}\cdots R_{i_{r}})\right)\\
&=&d+4\mathcal{N}_{X}(R_{k})-4\sum_{\overset{i_{1}+\cdots+i_{r}=k}{1<r\leq k}}(-1)^{r}\mathcal{N}_{X}(R_{i_{1}}\cdots R_{i_{r}})
\end{eqnarray*}
Therefore, (\ref{numero_run_k}) is also true for $k$. 
\end{proof}

The above result characterizes all class $X_{C}$ with $2$-level autocorrelation. In the following
section we will get any further ahead and we will to characterize families of binary sequences whose
sum of nontrivial autocorrelations has constant value $c$.

\section{Periodic Compatible Binary Sequences}

In [17] was defined two sequences $A$ and $B$ as compatible if the sum of their periodic nontrivial
autocorrelations is a constant, say $c$. That is
\begin{equation*}
\mathsf{P}_{A}(k)+\mathsf{P}_{B}(k)=c.
\end{equation*}
In this section we extend the above definition to any family of sequences with this property.

\begin{definition}
A family of circulant $S$-sets $\{A_{iC}\}$, $1\leq i\leq q$, in $\mathbb{Z}_{2C}^{n}$, that
satisfy $\sum_{i=1}^{q}\mathsf{P}_{A_{i}}(k)=c$ for $k\neq0$, it a family of \textbf{periodic compatible sequences} and denoted $PComS(n,q,c)$.
\end{definition}

When $c=0$ this family is called \textit{periodic complementary sequences}(see [13],[14]) 
and denoted $PCS(n,q)$. From this, $PComS(n,q,0)=PCS(n,q)$. We will prove some preliminary results 
on $PComS$.
\begin{theorem}
If there exist families $PComS(n,q_{1},c_{1})$ and $PComS(n,q_{2},c_{2})$, then there exists a
family $PComS(n,q_{1}+q_{1},c_{1}+c_{2})$. 
\end{theorem}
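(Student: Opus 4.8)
The plan is to prove this by simple concatenation of the two given families, relying on the fact that the defining condition for $PComS$ is a purely additive constraint on the sum of nontrivial autocorrelations. First I would fix a family $\{A_{iC}\}_{i=1}^{q_{1}}$ realising $PComS(n,q_{1},c_{1})$, so that $\sum_{i=1}^{q_{1}}\mathsf{P}_{A_{i}}(k)=c_{1}$ for every $k\neq0$, together with a family $\{B_{jC}\}_{j=1}^{q_{2}}$ realising $PComS(n,q_{2},c_{2})$, so that $\sum_{j=1}^{q_{2}}\mathsf{P}_{B_{j}}(k)=c_{2}$ for every $k\neq0$. Because both families are defined over the same group $\Z_{2}^{n}$, sharing the common period $n$, the autocorrelation values $\mathsf{P}_{A_{i}}(k)$ and $\mathsf{P}_{B_{j}}(k)$ are comparable at the same shifts $k$, and the two families may legitimately be merged.

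Next I would form the list of $q_{1}+q_{2}$ circulant $S$-sets obtained by concatenating the two families, relabelling indices so that the first $q_{1}$ entries are the $A_{iC}$ and the remaining $q_{2}$ entries are the $B_{jC}$. The key observation here is that the definition indexes a family by $1\leq i\leq q$ and imposes no distinctness condition on its members; hence any repetitions occurring between the two families are harmless, and the concatenated object is again a genuine family of circulant $S$-sets in $\Z_{2C}^{n}$.

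Then I would evaluate the sum of nontrivial autocorrelations of the concatenated family. For each fixed shift $k\neq0$ the sum splits along the two blocks,
\begin{equation*}
\sum_{i=1}^{q_{1}}\mathsf{P}_{A_{i}}(k)+\sum_{j=1}^{q_{2}}\mathsf{P}_{B_{j}}(k)=c_{1}+c_{2},
\end{equation*}
where the right-hand equality is exactly the two defining hypotheses applied separately. Since the value $c_{1}+c_{2}$ is independent of $k$ and this holds for every nonzero shift, the concatenated family meets the defining requirement of $PComS(n,q_{1}+q_{2},c_{1}+c_{2})$ (which is the intended reading of the stated parameter $q_{1}+q_{2}$), completing the argument.

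I do not expect any genuine obstacle: the entire content of the statement is the additivity of the shift-wise autocorrelation sum over a disjoint union of families, combined with the index-based rather than set-based formulation of the definition. The only point meriting a line of care is the verification that concatenation preserves the \emph{circulant} $S$-set structure of each member, which is immediate because each constituent $A_{iC}$ and $B_{jC}$ is already a circulant $S$-set in $\Z_{2C}^{n}$ and this property is unaffected by placing them in a common list.
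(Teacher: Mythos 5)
Your proposal is correct: concatenating the two indexed families and adding the two shift-wise autocorrelation identities is all that is needed, and you rightly note both that the definition is index-based (so repetitions are harmless) and that the stated parameter $q_{1}+q_{1}$ is a typo for $q_{1}+q_{2}$. The paper itself states this theorem without any proof, evidently regarding it as immediate, and your concatenation argument is exactly the intended one (it is also what makes the paper's subsequent corollary, iterating a single family $k$ times to get $PComS(n,kq,kc)$, follow at once).
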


\begin{corollary}
If there exist $PComS(n,q,c)$, then there exists $PComS(n,kq,kc)$ for all $k\geq2$.
\end{corollary}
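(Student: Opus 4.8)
The plan is to derive this corollary from the preceding theorem by a straightforward induction on $k$, using that theorem as the engine which, given two compatible families, produces a single larger family whose parameters are the sums of the originals. Before starting the induction I would settle the one conceptual point the argument depends on: the theorem may legitimately be invoked with two copies of one and the same family. Its hypothesis asks only for the \emph{existence} of families $PComS(n,q_{1},c_{1})$ and $PComS(n,q_{2},c_{2})$ and imposes no requirement that they be distinct, so taking both to be the given $PComS(n,q,c)$ is permitted; concretely, juxtaposing a family with a second copy of itself yields a family of $2q$ circulant $S$-sets, and since $\sum_{i=1}^{2q}\mathsf{P}_{A_{i}}(k)$ is by definition additive over the members, this sum equals $c+c$.

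For the base case $k=2$ I would apply the theorem with $q_{1}=q_{2}=q$ and $c_{1}=c_{2}=c$, obtaining at once a family $PComS(n,q+q,c+c)=PComS(n,2q,2c)$, which is the assertion for $k=2$. For the inductive step I would assume, for some $k\geq 3$, that $PComS(n,(k-1)q,(k-1)c)$ exists, and then apply the theorem to this family together with the original $PComS(n,q,c)$. The theorem returns a family $PComS(n,(k-1)q+q,(k-1)c+c)=PComS(n,kq,kc)$, completing the induction and hence the proof for all $k\geq 2$.

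I do not expect a genuine obstacle: the whole argument is a repeated specialization of the previous theorem to equal arguments, and it rests entirely on the additivity of the periodic autocorrelation over a family, which is exactly the content that theorem encodes. The only step demanding an explicit word of justification is the observation recorded above, that duplicating a single family is a valid instance of the theorem's hypothesis; once that is granted, both the base case and the inductive step are immediate, and no computation with the run structure or with the formula (\ref{run-new}) is needed at this level.
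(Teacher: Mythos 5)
Your proof is correct and is exactly the intended argument: the paper states this corollary without proof as an immediate consequence of the preceding theorem, namely iterating that theorem with two copies of the same family (base case $k=2$) and then with the family obtained at step $k-1$ together with the original one. Your explicit remark that the theorem's hypothesis permits taking the two families equal, with the autocorrelation sum additive over the indexed (multiset) family, is the only point needing care, and you handle it correctly.
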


Then, we show a generalization of theorem \ref{caracterizacion} for a family $PComS(n,q,c)$ 

\begin{theorem}\label{caract_PComS}
Let $\{A_{iC}\}$ be a $PComS(n,q,c)$. Then
\begin{eqnarray*}
\sum_{i=1}^{q}l(A_{i})&=&\frac{nq-c}{2}\\
\sum_{i=1}^{q}\mathcal{N}_{A_{i}}(R_{1})&=&\frac{nq-c}{4}\\
\sum_{i=1}^{q}\mathcal{N}_{A_{i}}(R_{k})&=&\sum_{i=1}^{q}\sum_{\overset{i_{1}+\cdots+i_{r}=k}{1<r\leq k}}(-1)^{r}\mathcal{N}_{A_{i}}(R_{i_{1}}\cdots R_{i_{r}})
\end{eqnarray*}
for $2\leq k\leq n-2$.
\end{theorem}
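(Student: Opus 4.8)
The plan is to exploit the fact that the run formula (\ref{run-new}) is linear in the data $l(X)$, $\mathcal{N}_X(R_1)$ and the higher run-counts $\mathcal{N}_X(R_{i_1}\cdots R_{i_r})$. Each $A_{iC}$ is a circulant $S$-set, so (\ref{run-new}) applies to every $A_i$ separately. Summing that identity over $i=1,\dots,q$ and invoking the compatibility hypothesis $\sum_{i=1}^{q}\mathsf{P}_{A_i}(k)=c$ turns it into an identity of exactly the same shape as in Theorem~\ref{caracterizacion}, with $n$ replaced by $nq$, the $2$-level constant $d$ replaced by $c$, and each single-sequence quantity replaced by its aggregate $\sum_{i=1}^{q}(\cdot)$ over the family. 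Consequently the entire argument of Theorem~\ref{caracterizacion} transfers, and the work reduces to bookkeeping.

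First I would dispose of the two base cases. Summing $\mathsf{P}_{A_i}(1)=n-2l(A_i)$ over $i$ gives $c=nq-2\sum_i l(A_i)$, hence $\sum_i l(A_i)=(nq-c)/2$. Summing $\mathsf{P}_{A_i}(2)=n-4l(A_i)+4\mathcal{N}_{A_i}(R_1)$ and substituting the value just found yields $c=nq-2(nq-c)+4\sum_i\mathcal{N}_{A_i}(R_1)$, so $\sum_i\mathcal{N}_{A_i}(R_1)=(nq-c)/4$. These are the first two asserted identities.

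For the third identity I would induct on $k\ge 2$, assuming it already holds for all indices $j$ with $2\le j\le k-1$. Applying (\ref{run-new}) to each $A_i$ at shift $k+1$ and summing, the leading terms $nq-2(k+1)\tfrac{nq-c}{2}+4k\tfrac{nq-c}{4}$ collapse to $c$, exactly as in Theorem~\ref{caracterizacion}. In the residual sum $\sum_{j=2}^{k}4(k+1-j)\big(\sum_i\mathcal{N}_{A_i}(R_j)-\sum_i\sum_{i_1+\cdots+i_r=j,\,1<r\le j}(-1)^r\mathcal{N}_{A_i}(R_{i_1}\cdots R_{i_r})\big)$ every bracket with $j\le k-1$ vanishes by the induction hypothesis, so only the $j=k$ term survives, carrying coefficient $4(k+1-k)=4$. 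Cancelling $c$ from both sides then gives $\sum_i\mathcal{N}_{A_i}(R_k)=\sum_i\sum_{i_1+\cdots+i_r=k,\,1<r\le k}(-1)^r\mathcal{N}_{A_i}(R_{i_1}\cdots R_{i_r})$, completing the induction.

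The one point demanding care---the nearest thing to an obstacle---is that the induction hypothesis must be invoked for the \emph{aggregated} quantities $\sum_i\mathcal{N}_{A_i}(\cdot)$, not for any single $A_i$: no individual member of a $PComS(n,q,c)$ need have $2$-level autocorrelation, so the vanishing brackets are a statement about the family as a whole. The interchange of the finite sum over $i$ with the nested composition sums inside (\ref{run-new}) is automatic, so once the aggregation is tracked consistently the proof goes through without further difficulty.
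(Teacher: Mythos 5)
Your proof is correct and is exactly the intended argument: the paper states this theorem \emph{without} proof, presenting it as a generalization of Theorem \ref{caracterizacion}, and your approach---summing the run formula (\ref{run-new}) over the family, using $\sum_{i=1}^{q}\mathsf{P}_{A_i}(k)=c$ in place of the $2$-level hypothesis, and running the same induction on $k$---is precisely the transfer of the paper's proof of Theorem \ref{caracterizacion} to the aggregated quantities $\sum_{i}l(A_i)$, $\sum_{i}\mathcal{N}_{A_i}(R_1)$, $\sum_{i}\mathcal{N}_{A_i}(R_k)$. Your closing caveat, that the induction hypothesis must be invoked for the family sums rather than for any individual $A_i$ (since no single member need have $2$-level autocorrelation), is the only genuine point of difference from the single-sequence case, and you handle it correctly.
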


Since $\G_{n}(a)=-\G_{n}(n-a)$ and $\mathsf{P}_{-X}(k)=\mathsf{P}_{X}(k)$, then will be enough
to find families $PComS$ in $\cup_{1<a\leq\lfloor\frac{n}{2}\rfloor}\G_{n}(a)$.

\begin{definition}
Let $\mathrm{I}$ be a any subset in $[0,1,...,n]$ and let $T(\mathrm{I})=\{\G_{n}(k)\}_{k\in\mathrm{I}}$ be a $S_{H}$-set of $\mathbb{Z}_{2}^{n}$. Pick a $\G_{n}(a)$ any in $S_{H}$. We say 
that $T(\mathrm{I})$ is $\G_{n}(a)$-\textbf{complete} if
\begin{enumerate}
\item $\G_{n}(i)\G_{n}(j)\supset\G_{n}(a)$ for all $\G_{n}(i),\G_{n}(j)\in T$.
\item There is no $\G_{n}(b)$ in $S_{H}$ such that $\G_{n}(b)^{2}\supset\G_{n}(a)$ and 
$\G_{n}(b)\G_{n}(k)\supset\G_{n}(a)$ for all $\G_{n}(k)\in T$.
\end{enumerate}
\end{definition}

\begin{theorem}
There is exactly a set $\G_{n}(a)$-complete $S_{H}$-set $T(\mathrm{I})$ of order $a$ with
$\mathrm{I}=[\frac{n-a}{2},\frac{n+a}{2}]$, $a<n$.
\end{theorem}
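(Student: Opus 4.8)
The plan is to translate the set-theoretic relation $\G_n(i)\G_n(j)\supset\G_n(a)$ into explicit numerical conditions on the weights by means of the product rule (\ref{producto}), and then to read off the unique admissible index set from those conditions. Recall that by the reduction preceding the statement we may assume $1<a\le\lfloor n/2\rfloor$. Setting $i=j=b$ in (\ref{producto}), the self-product $\G_n(b)^{2}$ is the union of the classes $\G_n(w)$ with $w$ running over an arithmetic progression of common difference $2$ from $|2b-n|$ up to $n$; hence $\G_n(b)^{2}\supset\G_n(a)$ holds precisely when $a\equiv n\pmod 2$ and $|2b-n|\le a$, i.e. when $\frac{n-a}{2}\le b\le\frac{n+a}{2}$. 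This single computation already pins down the only candidates for membership in any $\G_n(a)$-complete set: each $\G_n(b)$ in such a set must satisfy the first defining condition with $i=j=b$, so its index must lie in $\mathrm{I}=[\frac{n-a}{2},\frac{n+a}{2}]$.

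First I would prove existence, namely that $T(\mathrm{I})$ itself is $\G_n(a)$-complete. For the first condition, take any $i,j\in\mathrm{I}$; by (\ref{producto}) the product $\G_n(i)\G_n(j)$ contains $\G_n(a)$ as soon as $a$ lies in the weight range $|i+j-n|\le a\le n-|i-j|$ and has the parity forced by the progression, which is exactly the congruence $a\equiv n\pmod 2$ already built into the well-definedness of the endpoints $\frac{n\pm a}{2}$. The left inequality is automatic from the defining bounds of $\mathrm{I}$, since $i,j\in\mathrm{I}$ forces $n-a\le i+j\le n+a$. The right inequality reduces to $|i-j|\le n-a$; as the span of $\mathrm{I}$ equals $a$ we have $|i-j|\le a$, and here the hypothesis $a\le\lfloor n/2\rfloor$ is used decisively to give $a\le n-a$ and hence $|i-j|\le n-a$. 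Thus every pair drawn from $\mathrm{I}$ satisfies the first condition.

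Next I would establish the second condition together with uniqueness. Maximality is immediate from the vertex characterization above: any $\G_n(b)$ with $\G_n(b)^{2}\supset\G_n(a)$ necessarily has $b\in\mathrm{I}$, so no index outside $\mathrm{I}$ is available to adjoin, and $T(\mathrm{I})$ is maximal in the required sense. For uniqueness, let $T(\mathrm{I}')$ be an arbitrary $\G_n(a)$-complete set. Applying the first condition with $i=j$ to each of its elements gives $\mathrm{I}'\subseteq\mathrm{I}$; but by the existence part all indices of $\mathrm{I}$ are mutually compatible, so if some index of $\mathrm{I}$ were missing from $\mathrm{I}'$ it could be adjoined, contradicting the second condition. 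Hence $\mathrm{I}'=\mathrm{I}$, and the interval $[\frac{n-a}{2},\frac{n+a}{2}]$, whose length (order) is $a$, is the unique $\G_n(a)$-complete $S_{H}$-set.

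The main obstacle is the verification of the first condition across the whole interval, i.e. checking that the weight range $[\,|i+j-n|,\ n-|i-j|\,]$ produced by (\ref{producto}) really covers $\G_n(a)$ for every pair from $\mathrm{I}$ and not merely for the diagonal pairs. This is where the arithmetic bookkeeping concentrates: the left endpoint of the range is governed by the location of $\mathrm{I}$ relative to $n$, the right endpoint by the span of $\mathrm{I}$, and the congruence $a\equiv n\pmod 2$ must be tracked alongside both; it is precisely the constraint $a\le\lfloor n/2\rfloor$ that forces the two one-sided bounds to be simultaneously satisfiable. Once this coverage is in place, maximality and uniqueness follow formally from the self-product characterization that fixes the admissible indices to be exactly $\mathrm{I}$.
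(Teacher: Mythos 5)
Your reduction of the self-product condition is correct, and it is the only part of the theorem the paper actually uses afterwards: $\G_{n}(b)^{2}\supset\G_{n}(a)$ holds exactly when $a\equiv n\pmod 2$ and $\frac{n-a}{2}\le b\le\frac{n+a}{2}$. (The paper itself offers no self-contained argument here, only a citation of an external reference, so the comparison must be against the definitions.) However, your existence step contains a genuine error in the parity bookkeeping. By (\ref{producto}), the classes occurring in $\G_{n}(i)\G_{n}(j)$ have weights $n-i-j,\, n-i-j+2,\,\dots,\, n-|i-j|$, i.e.\ all weights congruent to $n-i-j \pmod 2$, \emph{not} to $n\pmod 2$. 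So the congruence required for $\G_{n}(i)\G_{n}(j)\supset\G_{n}(a)$ is $a\equiv n+i+j\pmod 2$, which, given $a\equiv n\pmod 2$, forces $i\equiv j\pmod 2$. Since $\mathrm{I}=[\frac{n-a}{2},\frac{n+a}{2}]$ consists of $a+1$ consecutive integers, it contains indices of both parities, and for any mixed-parity pair the product contains no class of weight $\equiv n\pmod 2$ at all, in particular not $\G_{n}(a)$. Concretely, take $n=4$, $a=2$: then $1,2\in\mathrm{I}=[1,3]$, but $\G_{4}(1)\G_{4}(2)=\G_{4}(1)\cup\G_{4}(3)\not\supset\G_{4}(2)$, so $T(\mathrm{I})$ violates the first defining condition of completeness.

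The same error undermines your maximality and uniqueness step, which relies on all indices of $\mathrm{I}$ being ``mutually compatible'': they are not. Under the literal definition, families satisfying condition 1 split along parity classes; in the example above both $\{\G_{4}(2)\}$ and $\{\G_{4}(1),\G_{4}(3)\}$ satisfy condition 1 and neither can be enlarged, so the argument ``any missing index of $\mathrm{I}$ could be adjoined'' breaks down, and neither existence of $T(\mathrm{I})$ as a complete set nor its uniqueness follows the way you propose. What survives of your attempt --- and what the subsequent corollary on $PComS(n,q,c)$ actually needs --- is precisely the self-product characterization of the interval $\mathrm{I}$. To repair the proof along your lines you would have to either restrict $\mathrm{I}$ to indices of a fixed parity, or reinterpret ``$\G_{n}(a)$-complete'' as involving only the self-product condition; as written, your verification of condition 1 for arbitrary pairs from $\mathrm{I}$ is false. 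A further, smaller point: your standing assumption $a\le\lfloor n/2\rfloor$ is not harmless, since the corollary applies the theorem with $a=\frac{nq+c}{2}$, which can exceed $nq/2$, and completeness for $\G_{n}(a)$ does not transfer to $\G_{n}(n-a)$ by negation in any obvious way, so the case $a>\lfloor n/2\rfloor$ would still need a separate treatment.
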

\begin{proof}
The proof is analogous to the proof of theorem 4 in [3].
\end{proof}

\begin{corollary}
If there exist $PComS(n,q,c)$, then is contained in a set $\G_{nq}(\frac{nq+c}{2})$-complete 
$S_{H}$-set.
\end{corollary}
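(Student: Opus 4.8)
The plan is to reduce the statement to a single membership claim in the Hamming Schur ring $S_{H}$ over $\Z_{2}^{nq}$ and then quote the preceding theorem. To a family $\{A_{iC}\}$ realizing $PComS(n,q,c)$ I associate the concatenation $A=(A_{1},\dots,A_{q})\in\Z_{2}^{nq}$, of weight $W=\omega(A)=\sum_{i=1}^{q}\omega(A_{i})$. Since a cyclic shift inside each block preserves weights, the blockwise shift $(C^{k}A_{1},\dots,C^{k}A_{q})$ again has weight $W$ and so lies in $\G_{nq}(W)$; consequently the coordinatewise product $(A_{1}C^{k}A_{1},\dots,A_{q}C^{k}A_{q})$ belongs to $\G_{nq}(W)\,\G_{nq}(W)=\G_{nq}(W)^{2}$. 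The goal is to show $\G_{nq}(W)\in T(\mathrm{I})$ for the $\G_{nq}(\tfrac{nq+c}{2})$-complete set $T(\mathrm{I})$ furnished by the previous theorem; this is exactly the assertion that the $PComS$ sits inside that $S_{H}$-set.

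First I would rewrite each autocorrelation value as a weight. Writing entries as $\pm1$, for $k\neq0$ one has $\mathsf{P}_{A_{i}}(k)=2\,\omega(A_{i}C^{k}A_{i})-n$, because $\omega(A_{i}C^{k}A_{i})$ counts the coordinates on which $A_{i}$ and its shift agree. Summing over $i$ and using the defining relation $\sum_{i=1}^{q}\mathsf{P}_{A_{i}}(k)=c$ gives $\sum_{i=1}^{q}\omega(A_{i}C^{k}A_{i})=\tfrac{nq+c}{2}$; but the left-hand side is precisely the weight of the blockwise product above. Hence $\G_{nq}(W)^{2}$ contains an element of weight $\tfrac{nq+c}{2}$, and since $\G_{nq}(W)^{2}$ is an $S_{H}$-set, i.e.\ a union of whole layers $\G_{nq}(\cdot)$, it contains that entire layer: $\G_{nq}(\tfrac{nq+c}{2})\subseteq\G_{nq}(W)^{2}$.

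Next I would read off the admissible range of $W$ from the product rule (\ref{producto}). That rule shows $\G_{nq}(W)^{2}$ consists exactly of the layers $\G_{nq}(w)$ with $w\equiv nq\pmod2$ and $|nq-2W|\le w\le nq$. Imposing $w=\tfrac{nq+c}{2}$ yields $|nq-2W|\le\tfrac{nq+c}{2}$, equivalently $\tfrac{nq-c}{4}\le W\le\tfrac{3nq+c}{4}$, which is exactly the interval $[\tfrac{n-a}{2},\tfrac{n+a}{2}]$ of the preceding theorem evaluated at $n\mapsto nq$ and $a=\tfrac{nq+c}{2}$. Therefore $\G_{nq}(W)\in T(\mathrm{I})$, and by that theorem $T(\mathrm{I})$ is precisely the $\G_{nq}(\tfrac{nq+c}{2})$-complete $S_{H}$-set, proving the containment. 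A parity check confirms the layer genuinely occurs: from $\mathsf{P}_{X}(k)\equiv n\pmod4$ one gets $c\equiv nq\pmod4$, whence $\tfrac{nq+c}{2}\equiv nq\pmod2$, matching the parity constraint in the product rule.

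The main obstacle is the translation in the middle step: one must verify that the blockwise correlation products are genuinely elements of $\G_{nq}(W)^{2}$ — this is where the weight-invariance of blockwise shifts and the coordinatewise description of $\mathsf{P}$ are both used — and that a single element of weight $\tfrac{nq+c}{2}$ forces the whole layer $\G_{nq}(\tfrac{nq+c}{2})$ into the $S_{H}$-set. Once the inclusion $\G_{nq}(\tfrac{nq+c}{2})\subseteq\G_{nq}(W)^{2}$ is secured, extracting the weight bound and appealing to the previous theorem are routine.
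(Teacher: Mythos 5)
Your proposal is correct and takes essentially the same route as the paper's proof: concatenate the family into a single sequence lying in some layer $\G_{nq}(a)$, establish $\G_{nq}(a)^{2}\supset\G_{nq}\bigl(\frac{nq+c}{2}\bigr)$, and conclude from the preceding theorem that $a\in\bigl[\frac{nq-c}{4},\frac{3nq+c}{4}\bigr]$, i.e.\ that the layer belongs to the $\G_{nq}(\frac{nq+c}{2})$-complete $S_{H}$-set. The only difference is that you supply the justification the paper leaves implicit (the identity $\mathsf{P}_{A_{i}}(k)=2\,\omega(A_{i}C^{k}A_{i})-n$, the blockwise-shift argument placing the correlation products inside $\G_{nq}(a)^{2}$, and the derivation of the weight interval from the product rule), where the paper simply asserts the inclusion with ``then should be''.
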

\begin{proof}
Let $X=(A_{1},A_{2},...,A_{q})\in PComS(n,q,c)$ and suppose $X\in\G_{nq}(a)$. Then should be
$\G_{nq}(a)^{2}\supset\G_{nq}(\frac{nq+c}{2})$. From above theorem $a\in[\frac{nq-c}{4},\frac{3nq+c}{4}]$.
\end{proof}

\begin{theorem}\label{no_trivial_tamanio_1}
$PComS(n,1,n-4)=\G_{n}(1)$ for all $n$.
\end{theorem}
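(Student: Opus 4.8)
The plan is to recognize that a family $PComS(n,1,c)$ consists of a single circulant orbit $X_C$ whose every nontrivial autocorrelation equals $c$; that is, $X_C$ has $2$-level autocorrelation with value $c$. With $c=n-4$, Theorem~\ref{caracterizacion} is exactly the tool that converts this spectral condition into information about the run structure of $X$, and I would prove the asserted set equality by two inclusions.

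For the inclusion $\G_n(1)\subseteq PComS(n,1,n-4)$ I would argue by direct computation. All members of $\G_n(1)$ are cyclic shifts of the representative $X=(+,-,\dots,-)$, so they form a single class $X_C$ and, since $\mathsf{P}_{C^iX}(k)=\mathsf{P}_X(k)$, it suffices to evaluate $\mathsf{P}_X(k)$ for this one $X$. For $k\neq 0$ the product $x_ix_{i+k}$ equals $-1$ at exactly the two indices $i=0$ and $i=n-k$ and equals $+1$ at the remaining $n-2$ indices, whence $\mathsf{P}_X(k)=(n-2)-2=n-4$ for every nontrivial $k$. Thus $X_C$ is a $PComS(n,1,n-4)$.

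For the reverse inclusion I would start from an arbitrary $X_C\in PComS(n,1,n-4)$, i.e. a class with $2$-level value $d=n-4$, and feed it into Theorem~\ref{caracterizacion}. The first line gives $l(X)=(n-d)/2=2$, so (for $n\geq3$) $X$ has exactly two cyclic runs and is therefore, up to a shift, of the form $\underbrace{+\cdots+}_{a}\underbrace{-\cdots-}_{b}$ with $a+b=n$ and $a,b\geq1$. The second line gives $\mathcal{N}_X(R_1)=(n-d)/4=1$, i.e. exactly one of the two runs has length $1$; hence exactly one of $a,b$ equals $1$, forcing the weight $\omega(X)=a$ to be $1$ or $n-1$. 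Consequently $X_C\in\G_n(1)\cup\G_n(n-1)$, and since $\G_n(n-1)=-\G_n(1)$ with $\mathsf{P}_{-X}=\mathsf{P}_X$, both cases represent the same class $\G_n(1)$ under the identification already adopted before the theorem. This yields $PComS(n,1,n-4)\subseteq\G_n(1)$ and, combined with the previous paragraph, the equality.

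The step I expect to carry the real weight is the reverse inclusion, and within it the translation of the purely numerical datum $l(X)=2$ into the structural statement that $X$ is a two-run sequence $+^a-^b$: this rests on the fact that $l(X_C)$ is the cyclic run count, so $l(X)=2$ leaves no room for a third run. The remaining subtlety is bookkeeping rather than mathematics, namely collapsing the two solutions $\omega\in\{1,n-1\}$ onto the single symbol $\G_n(1)$ via the $\pm$ identification. The degenerate period $n=2$ (where $k=2$ is not a nontrivial shift, so the $\mathcal{N}_X(R_1)$ line is unavailable) I would dispatch by a one-line direct check that among $++,+-,-+,--$ only the weight-one sequences have $\mathsf{P}_X(1)=-2=n-4$. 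As an alternative route avoiding the run machinery entirely, one may note that summing $\mathsf{P}_X$ over all shifts gives $(\sum_i x_i)^2=(2\omega-n)^2$, while the $2$-level hypothesis gives $n+(n-1)(n-4)=(n-2)^2$, so $(2\omega-n)^2=(n-2)^2$ immediately forces $\omega\in\{1,n-1\}$.
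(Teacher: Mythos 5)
Your proof is correct and follows essentially the same route as the paper's: the inclusion $\G_n(1)\subseteq PComS(n,1,n-4)$ by direct evaluation of $\mathsf{P}_X(k)$, and the reverse inclusion by feeding $d=n-4$ into Theorem~\ref{caracterizacion} to get $l(X)=2$ and $\mathcal{N}_X(R_1)=1$, forcing the two-run form $(1)\diamond(n-1)$. Your additional care with the $\omega\in\{1,n-1\}$ identification and the degenerate case $n=2$ only makes explicit what the paper leaves implicit.
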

\begin{proof}
Pick $X$ in $PComS(n,1,n-4)$. Then $l(X)=2$ and $\mathcal{N}_{X}(R_{1})=1$. Hence 
$X=(1)\diamond(n-1)$ and $X\in\G_{n}(1)$. Now, pick $X$ in $\G_{n}(1)$. Then $\mathsf{P}_{X}(0)=n$
and $\mathsf{P}_{X}(k)=n-4$ for $1\leq k\leq n-1$. Thus $X\in PComS(n,1,n-4)$.
\end{proof}

Let $\mathfrak{A}(\Z_{2C}^{n})$ denote the set of \textit{autocorrelations vector} in $\Z^{n}$
and let $\theta:\Z_{2C}^{n}\rightarrow\mathfrak{A}(\Z_{2C}^{n})$ be the mapping 
$\theta(X_{C})=(\mathsf{P}_{X}(0),\mathsf{P}_{X}(1),\dots ,\mathsf{P}_{X}(n-1))$. The decimation group $\Delta_{n}$ do not alter the set of values which $\mathsf{P}_{X}(k)$ takes on, but merely the order in which they appear. This is, $XC^{i}X\longrightarrow XC^{ri}X$ and $\delta_{r}\in\Delta_{n}$ is a permutation over $\theta(X_{C})$. Let $\Delta_{n}(\theta(X_{C}))$ denote the set
\begin{equation*}
\{\delta_{r}(\theta (X_{C})):\delta_{r}\in \Delta_{n} \}\subset\mathfrak{A}(\Z_{2C}^{n}).
\end{equation*}
Then $\theta$ is a mapping of equivalence class, thus $\theta: \Delta_{n}(X_{C})\rightarrow \Delta_{n}(\theta(X_{C}))$. We have the commutative diagram
\begin{equation}\label{diagram}
\xymatrix{
 \Z_{2C}^{n} \ar[d]^{\theta} \ar[r]^{\delta_{r}} & \Z_{2C}^{n} \ar[d]^{\theta}\\
   \mathfrak{A}(\Z_{2C}^{n}) \ar[r]^{\delta_{r}} & \mathfrak{A}(\Z_{2C}^{n}) 
}
\end{equation}
and $\theta \circ \delta_{r} = \delta_{r}\circ \theta.$\\

\begin{theorem}
Let $p$ be a prime number, $p>2$. $PComS(p,\frac{p-1}{2},2a(a-p)+\frac{p(p-1)}{2})$ 
there exist in $\G_{p}(a)$ for all $p$ and $1<a<p$.
\end{theorem}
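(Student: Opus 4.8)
The plan is to produce the family explicitly as the orbit of a single base sequence under half of the decimation group $\Delta_p\cong U(\Z_p)$. The first thing I would do is observe that the prescribed constant is forced. For any $X\in\G_p(a)$, interchanging the order of summation gives the elementary identity
\[
\sum_{k=0}^{p-1}\mathsf{P}_X(k)=\sum_{i=0}^{p-1}x_i\sum_{k=0}^{p-1}x_{i+k}=\Big(\sum_{i=0}^{p-1}x_i\Big)^2=(2a-p)^2 ,
\]
since a weight-$a$ sequence has digit sum $a-(p-a)=2a-p$. As $\mathsf{P}_X(0)=p$, this yields $\sum_{k=1}^{p-1}\mathsf{P}_X(k)=(2a-p)^2-p$, and a one-line expansion shows $(2a-p)^2-p=2\big(2a(a-p)+\tfrac{p(p-1)}{2}\big)=2c$. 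Thus $c$ is exactly half of the total nontrivial autocorrelation common to every class in $\G_p(a)$, which both explains the formula and signals that the family should sum the autocorrelation over precisely half the shifts.

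Next I would record two facts about a fixed (otherwise arbitrary) $X\in\G_p(a)$. First, decimation merely reindexes shifts: writing $\delta_r(x_i)=x_{ri}$ and substituting $m=rj$, one gets $\mathsf{P}_{\delta_rX}(k)=\sum_j x_{rj}x_{rj+rk}=\mathsf{P}_X(rk)$ for every $\delta_r\in\Delta_p$. Second, autocorrelation is symmetric, $\mathsf{P}_X(k)=\mathsf{P}_X(p-k)$, so $\mathsf{P}_X$ is constant on each pair $\{j,-j\}$, that is, on the orbits of $\{\pm1\}\le U(\Z_p)$ acting by multiplication on $\{1,\dots,p-1\}$.

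Then I would choose a transversal $R$ of $U(\Z_p)/\{\pm1\}$; since $p>2$ forces $-1\neq1$, this set has exactly $\tfrac{p-1}{2}=q$ elements, which is where the hypotheses enter. Put $A_i=(\delta_{r_i}X)_C$ for $r_i\in R$; decimation preserves Hamming weight, so each $A_i\in\G_p(a)$. For a fixed $k\neq0$, multiplication by the unit $k$ sends $R$ to another transversal $kR$ of $U(\Z_p)/\{\pm1\}$, so by the two facts above
\[
\sum_{i=1}^{(p-1)/2}\mathsf{P}_{A_i}(k)=\sum_{r\in R}\mathsf{P}_X(rk)=\tfrac12\sum_{j=1}^{p-1}\mathsf{P}_X(j)=\tfrac12\big((2a-p)^2-p\big)=c ,
\]
independently of $k$. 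Hence $\{A_{iC}\}$ is a $PComS(p,\tfrac{p-1}{2},c)$ contained in $\G_p(a)$.

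The autocorrelation computation is immediate once the averaging structure is in place, so the main point is conceptual: recognising that summing a symmetric class function over a half-transversal of the decimation group collapses it to a constant. The remaining care is pure bookkeeping. I would verify that $kR$ is again a transversal of $U(\Z_p)/\{\pm1\}$ (it is, since multiplication by a unit commutes with negation), and I would allow the family $\{A_{iC}\}$ to repeat a class when the base orbit is decimation-invariant. The latter occurs only at the extreme weights --- $\G_p(1)$ and $\G_p(p-1)$ are each a single class, so the $\tfrac{p-1}{2}$ members coincide --- but the sum identity is unaffected and still returns $c$, in accord with the earlier $PComS(n,1,n-4)$ theorem; for intermediate weights a base class such as the single block of $a$ consecutive $+$'s yields $\tfrac{p-1}{2}$ distinct orbits.
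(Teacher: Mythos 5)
Your proof is correct and follows essentially the same route as the paper: both realize the family as the orbit $\{(\delta_{r}X)_{C}\}$ of an arbitrary $X\in\G_{p}(a)$ under a set of decimations representing $U(\Z_{p})/\{\pm1\}$, use the reindexing identity $\mathsf{P}_{\delta_{r}X}(k)=\mathsf{P}_{X}(rk)$ together with the symmetry $\mathsf{P}_{X}(k)=\mathsf{P}_{X}(p-k)$, and conclude that the sum at each nonzero shift is $\tfrac{1}{2}\left[(2a-p)^{2}-p\right]=2a(a-p)+\tfrac{p(p-1)}{2}$. Your write-up is in fact more careful than the paper's (the transversal bookkeeping and the remark about possibly repeated classes are left implicit there), but the underlying idea is identical.
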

\begin{proof}
Pick $X$ in $\G_{p}(a)$. On the one hand, $\vert\Delta_{p}\vert=\phi(p)=p-1$. 
On the other hand, $\delta_{r}=\delta_{p-1}\delta_{p-r}$ implies that 
$\theta((\delta_{r}X)_{C})=\theta((\delta_{p-1}\delta_{p-r}X)_{C})=\theta((\delta_{p-r}X)_{C})$.
As $\theta(X_{C})$ has no fixed point under the action of $\delta_{r}\in\delta_{p}$, except 
$\mathsf{P}_{X}(0)$, then $\sum_{r=1}^{p-1}\mathsf{P}_{\delta_{r}X}(k)=\sum_{k=1}^{p-1}\mathsf{P}_{X}(k)=(2a-p)^{2}-p$. Hence
\begin{equation*}
c=\frac{1}{2}[(2a-p)^{2}-p]=2a(a-p)+\frac{p(p-1)}{2}.
\end{equation*}
\end{proof}

From the above, it is clear that the families $PComS(n,1,n-4)$ and $PComS(p,\frac{p-1}{2},2a(a-p)+\frac{p(p-1)}{2})$ are basic sets of the Schur rings $S_{C}$ and $S_{D}$, respectively.
Other relationship between families $PComS$ and Schur rings over $\Z_{2}^{n}$ will be shown 
in subsequent sections.

On ther other hand, in the theorem below we establish invariance properties of a $PComS$

\begin{theorem}
Let $\{A_{iC}\}$ be a family of $PComS(n,q,c)$. Then
\begin{enumerate}
\item $PComS(n,q,c)$ is invariant by negating any number of circulant sets in $\{A_{iC}\}$.
\item $PComS(n,q,c)$ is invariant by reversing any number of circulant sets in $\{A_{iC}\}$.
\end{enumerate}
\end{theorem}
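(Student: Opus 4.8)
The plan is to reduce the invariance of the whole family to a pointwise invariance of each individual autocorrelation value, since the defining condition $\sum_{i=1}^{q}\mathsf{P}_{A_{i}}(k)=c$ depends on the sequences only through the numbers $\mathsf{P}_{A_{i}}(k)$. Thus it suffices to show that both negation and reversal leave each $\mathsf{P}_{A_{i}}(k)$ unchanged; once this is established, replacing any subset of the $A_{i}$ by their negatives or reverses alters no summand, so the sum retains the value $c$ for every $k\neq0$ and the family is again a $PComS(n,q,c)$.

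For the first item I would compute directly from the definition $\mathsf{P}_{X}(k)=\sum_{i=0}^{n-1}x_{i}x_{i+k}$ that
\[
\mathsf{P}_{-X}(k)=\sum_{i=0}^{n-1}(-x_{i})(-x_{i+k})=\sum_{i=0}^{n-1}x_{i}x_{i+k}=\mathsf{P}_{X}(k),
\]
an identity already recorded in Section 5. Hence negating any $A_{i}$ fixes $\mathsf{P}_{A_{i}}(k)$ for all $k$, and part (1) follows immediately.

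For the second item, write $X=(x_{0},\dots,x_{n-1})$ so that $RX=(x_{n-1},\dots,x_{0})$ has entries $(RX)_{i}=x_{n-1-i}$, all indices taken mod $n$. Substituting $j=n-1-i$ in the autocorrelation sum then gives
\[
\mathsf{P}_{RX}(k)=\sum_{i=0}^{n-1}x_{n-1-i}x_{n-1-i-k}=\sum_{j=0}^{n-1}x_{j}x_{j-k}=\mathsf{P}_{X}(n-k),
\]
since $-k\equiv n-k\pmod n$. Invoking the palindromic symmetry $\mathsf{P}_{X}(k)=\mathsf{P}_{X}(n-k)$ from Section 4 then yields $\mathsf{P}_{RX}(k)=\mathsf{P}_{X}(k)$, so reversing any $A_{i}$ also fixes every autocorrelation value, and part (2) follows as well.

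I expect no genuine obstacle here: both claims are one-line index computations. The only point requiring slight care is the reversal identity, where one must track the cyclic index shift correctly and only at the end appeal to $\mathsf{P}_{X}(k)=\mathsf{P}_{X}(n-k)$, rather than expecting $\mathsf{P}_{RX}(k)=\mathsf{P}_{X}(k)$ on the nose; collapsing $n-k$ into $k$ prematurely is the easy slip to avoid.
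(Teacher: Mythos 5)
Your proposal is correct and follows exactly the route the paper intends: the paper's proof is the single line that the claims ``are derived from the properties of the autocorrelation function,'' and the two properties it has in mind are precisely the identities you verify, namely $\mathsf{P}_{-X}(k)=\mathsf{P}_{X}(k)$ (stated in Section 5) and $\mathsf{P}_{X}(k)=\mathsf{P}_{X}(n-k)$ (stated in Section 4), combined with the observation that the defining condition depends only on the values $\mathsf{P}_{A_{i}}(k)$. You have simply filled in the index computations that the paper leaves implicit, including the correct handling of the reversal case via $\mathsf{P}_{RX}(k)=\mathsf{P}_{X}(n-k)$.
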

\begin{proof}
This properties are derived from the properties of the autocorrelation function.
\end{proof}

\begin{definition}
Two families $PComS$ are \textbf{equivalent} if they are obtained via condictions in the 
above theorem.
\end{definition}

\begin{definition}
We say that a family $PComS(n,q,c)$ is \textbf{no trivial} if there are no families $PComS(n,q_{1},c_{1})$ and $PComS(n,q_{2},c_{2})$ such that $q=q_{1}+q_{2}$ y $c=c_{1}+c_{2}$. 
\end{definition}

The following non-equivalent families of $PComS$ are obtained by exhaustive search in 
$\mathbb{Z}_{2}^{n}$, with $n=4,5,6,7,8,9$.

\begin{theorem}
In $\mathbb{Z}_{2}^{4}$ the only non-trivial family of periodic compatible sequences is $PComS(4,1,0)=\{+---\}$.
\end{theorem}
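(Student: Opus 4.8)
The plan is to settle the claim by a short exhaustive analysis of the circulant orbits of $\Z_2^4$, using the correspondence of Section 3 (a family $PComS(4,q,c)$ is a collection of distinct circulant basic sets) together with the negation and reversal equivalences. There are six orbits, one per composition of $4$ up to rotation: the two constant sequences $\pm(1,1,1,1)$ lying in $\G_4(0)$ and $\G_4(4)$, the orbits $\{+---\}=\G_4(1)$ and $\{+++-\}=\G_4(3)$, and the two weight-$2$ orbits $\{++--\}$ and $\{+-+-\}$ inside $\G_4(2)$. The constant sequences are degenerate: their autocorrelation is identically $n$, i.e. the trivial value, and they yield only the exceptional $PComS(4,1,4)$; excluding them, four orbits remain to be weighed.

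First I would compute the nontrivial autocorrelation vectors $(\mathsf{P}_X(1),\mathsf{P}_X(2),\mathsf{P}_X(3))$ directly from $\mathsf{P}_X(k)=\sum_i x_i x_{i+k}$, using $\mathsf{P}_X(1)=\mathsf{P}_X(3)$ for $n=4$; this returns $(0,0,0)$ for both $+---$ and $+++-$, $(0,-4,0)$ for $++--$, and $(-4,4,-4)$ for $+-+-$. For the positive assertion, Theorem \ref{no_trivial_tamanio_1} specialised to $n=4$ gives $\G_4(1)=PComS(4,1,0)$ outright, and a family of length $q=1$ cannot be partitioned into two non-empty subfamilies, so $\{+---\}$ is genuinely non-trivial.

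The crux is to exclude every competing family. A subcollection is a $PComS$ exactly when the coordinatewise sum of its autocorrelation vectors is constant, which for $n=4$ collapses to the single equation $\sum\mathsf{P}(1)=\sum\mathsf{P}(2)$ because the third coordinate is forced equal to the first. Writing $\delta_X=\mathsf{P}_X(1)-\mathsf{P}_X(2)$ yields $\delta=0$ for $+---$ and $+++-$, $\delta=4$ for $++--$, and $\delta=-8$ for $+-+-$, so the constancy condition is the linear constraint $\sum\delta=0$ over the chosen distinct orbits. Any subset containing a weight-$2$ orbit produces $\sum\delta\in\{4,-8,-4\}$, never $0$; hence no weight-$2$ orbit can appear in a $PComS$, and the only non-constant $PComS$ are the non-empty subsets of $\{+---,+++-\}$.

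Finally I would assemble the result. Among these, the pair $\{+---,+++-\}$ is a $PComS$ but splits into the two singletons $\{+---\}$ and $\{+++-\}$, each a $PComS(4,1,0)$, hence is trivial; the remaining non-trivial families are the two singletons, which are interchanged by negation. Choosing the weight-$\le 2$ representative gives the unique non-trivial family $PComS(4,1,0)=\{+---\}$. The step needing the most care is completeness combined with bookkeeping of the equivalences: one must be sure that excluding the constant classes and identifying $+---$ with $+++-$ under negation does not silently suppress a genuinely different family, and it is precisely the reduction to the one-variable condition $\sum\delta=0$ that certifies no weight-$2$ orbit, alone or in combination, can contribute.
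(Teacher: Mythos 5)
Your proof is correct and follows what is essentially the paper's own approach: the paper offers no written argument for this theorem (it is presented as the outcome of an exhaustive search, and the appendix does not even list the $n=4$ case), and your enumeration of the six circulant orbits, the reduction of the constancy condition to the single linear constraint $\sum\delta=0$ with $\delta_{X}=\mathsf{P}_{X}(1)-\mathsf{P}_{X}(2)$ (valid since $\mathsf{P}_{X}(3)=\mathsf{P}_{X}(1)$ for $n=4$), and the bookkeeping of negation-equivalence and triviality simply carry that search out explicitly. Note that your two standing interpretations --- families consist of \emph{distinct} orbits, and the constant sequences $\pm(+,+,+,+)$ are excluded as having only the trivial autocorrelation value --- are exactly the conventions under which the paper's statement is true, so making them explicit strengthens rather than weakens the write-up.
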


\begin{theorem}\label{PComS_5}
In $\mathbb{Z}_{2}^{5}$ the only non-trivial and non-equivalent families of periodic compatible sequences are $PComS(5,1,1)$, $PComS(5,2,-2)$. See appendix.
\end{theorem}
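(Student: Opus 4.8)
The plan is to reduce the claim to a finite computation over the basic sets of the circulant orbit Schur ring $S_{C}$ in $\Z_{2}^{5}$, organised through the autocorrelation vectors. Since $5$ is an odd prime, $\widehat{F}(\Z_{2C}^{5})=\emptyset$, so every circulant basic set other than $\pm(1,1,1,1,1)$ has size $5$; and by the dimension formula $\vert S_{C}\vert=(2^{5}+2\cdot 5-2)/5=8$ there are exactly eight basic sets. First I would list representatives by Hamming weight: the two constant sequences (weights $0$ and $5$); the weight-$1$ and weight-$4$ classes; and, for each of weights $2$ and $3$, the two classes distinguished by whether the two like symbols are cyclically adjacent or separated. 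Invoking $\G_{5}(a)=-\G_{5}(5-a)$ together with $\mathsf{P}_{-X}(k)=\mathsf{P}_{X}(k)$ and the negation/reversal invariance of a $PComS$, the search collapses onto the classes of weight $1$ and $2$, the weight-$0$ and weight-$5$ classes being the excluded constant (trivial) sequences.

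Next I would compute the nontrivial autocorrelation vector $(\mathsf{P}_{X}(1),\dots,\mathsf{P}_{X}(4))$ of each surviving class, either directly from $\mathsf{P}_{X}(k)=\sum_{i}x_{i}x_{i+k}$ or from the run-structure formula (\ref{run-new}); using $\mathsf{P}_{X}(k)=\mathsf{P}_{X}(5-k)$ these are tuples of the shape $(u,v,v,u)$. The weight-$1$ class gives the constant tuple $(1,1,1,1)$, which is exactly $PComS(5,1,1)$ and agrees with Theorem \ref{no_trivial_tamanio_1} since $n-4=1$; the two weight-$2$ classes give $(1,-3,-3,1)$ and $(-3,1,1,-3)$.

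Then I would translate the defining condition of a $PComS$ into linear algebra. A family is a sub-multiset of basic sets, say with $\alpha$ copies of the weight-$1$ tuple $(1,1,1,1)$, $\beta$ copies of $(1,-3,-3,1)$ and $\gamma$ copies of $(-3,1,1,-3)$ (the weight-$3$ classes contribute the same two tuples, and the weight-$4$ class the same tuple as weight $1$, up to the equivalence noted above). The coordinatewise sum is $(\alpha+\beta-3\gamma,\ \alpha-3\beta+\gamma,\ \alpha-3\beta+\gamma,\ \alpha+\beta-3\gamma)$, which is constant in $k$ if and only if $\beta=\gamma$; in that case the common value equals $\alpha-2\beta$, producing a $PComS(5,\alpha+2\beta,\alpha-2\beta)$. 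Finally I would isolate the indecomposable solutions: $(\alpha,\beta,\gamma)=(1,0,0)$ yields $PComS(5,1,1)$ and $(0,1,1)$ yields $PComS(5,2,-2)$, whereas every other solution with $\beta=\gamma$ is the sum of $\alpha$ copies of the first and $\beta$ copies of the second and is therefore trivial in the sense of the non-triviality definition. Up to equivalence these two are the only non-trivial families, with the explicit orbit representatives recorded in the appendix.

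The main obstacle is organisational rather than computational: one must argue that the exhaustive search genuinely reduces to this three-parameter linear condition. The two points requiring care are (i) completeness of the orbit list, which is guaranteed by the primality of $5$ forcing every non-constant orbit to have size $5$ and by the count $\vert S_{C}\vert=8$; and (ii) that admitting the weight-$3$ and weight-$4$ classes, and allowing repetitions, produces no genuinely new families, which follows from the negation/reversal equivalence collapsing those classes onto the weight-$2$ and weight-$1$ tuples together with the decomposition $\alpha-2\beta$ analysis above.
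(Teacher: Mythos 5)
Your proof is correct, but it is genuinely more than what the paper offers: the paper gives no argument at all for this theorem, stating only that the families were ``obtained by exhaustive search'' and pointing to the appendix, where the two families are listed as raw data. What you do differently is replace the brute-force search over $\Z_{2}^{5}$ by a structured argument inside the circulant orbit Schur ring: primality of $5$ plus the dimension formula $\vert S_{C}\vert=(2^{5}+2\cdot 5-2)/5=8$ certifies the complete orbit list; negation/reversal equivalence collapses the orbits onto three autocorrelation tuples $(1,1,1,1)$, $(1,-3,-3,1)$, $(-3,1,1,-3)$; and the $PComS$ condition becomes the linear constraint $\beta=\gamma$ with value $\alpha-2\beta$, whose indecomposable solutions $(1,0,0)$ and $(0,1,1)$ are exactly $PComS(5,1,1)$ and $PComS(5,2,-2)$. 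This buys a hand-checkable, reproducible proof (and a template that would scale to the other small $n$ in Theorems 12--16, at least for prime $n$), whereas the paper's approach buys nothing beyond the asserted output of a computation. One caveat you share with the paper: the exclusion of the constant orbits is by convention rather than by the stated definitions, since $\{(+++++)_{C}\}$ is a $PComS(5,1,5)$ with $q=1$ and hence is not ``trivial'' in the sense of the paper's decomposition definition; the paper hides this in its restriction to weights $1<a\leq\lfloor n/2\rfloor$ (itself inconsistent, as written, with listing $\G_{5}(1)=PComS(5,1,1)$), so if you want your argument airtight you should state explicitly that constant sequences are excluded from consideration.
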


\begin{theorem}
In $\mathbb{Z}_{2}^{6}$ the only non-trivial and non-equivalent families of periodic compatible sequences are $PComS(6,1,2)$, $PComS(6,3,-2)$. See appendix.
\end{theorem}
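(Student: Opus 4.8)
The plan is to treat the statement as the outcome of a finite but carefully organized search over the circulant classes of $\Z_{2C}^{6}$, pruned by the invariances already established. First I would enumerate the circulant basic sets: a routine necklace count $\frac{1}{6}\sum_{d\mid 6}\phi(d)2^{6/d}=14$ shows there are exactly $14$ classes in $\Z_{2C}^{6}$, which I would list grouped by Hamming weight $\omega$ and annotate with the autocorrelation triple $(\mathsf{P}_{X}(1),\mathsf{P}_{X}(2),\mathsf{P}_{X}(3))$, using $\mathsf{P}_{X}(k)=\mathsf{P}_{X}(6-k)$ so that $\mathsf{P}_{X}(4)=\mathsf{P}_{X}(2)$ and $\mathsf{P}_{X}(5)=\mathsf{P}_{X}(1)$. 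Since $\mathsf{P}_{-X}(k)=\mathsf{P}_{X}(k)$ and reversal also leaves the autocorrelation unchanged, and since the decimation group $\Delta_{6}$ (of order $\phi(6)=2$) merely permutes the entries of the autocorrelation vector by the commutative diagram (\ref{diagram}), I can restrict to one representative per equivalence class, and by $\G_{6}(a)=-\G_{6}(6-a)$ to weights $a\le 3$.

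A family $PComS(6,q,c)$ is, by definition, a multiset $\{A_{iC}\}$ of these classes whose autocorrelation vectors sum to the constant vector $(c,c,c,c,c)$, and two linear constraints cut the search drastically. Because each nontrivial value satisfies $\mathsf{P}_{X}(k)\equiv 6\equiv 2\pmod 4$, summing gives $c\equiv 2q\pmod 4$. Moreover the energy identity $\sum_{k=0}^{5}\mathsf{P}_{X}(k)=(2\omega(X)-6)^{2}$ yields $\sum_{k=1}^{5}\mathsf{P}_{X}(k)=(2\omega(X)-6)^{2}-6$, so that $5c=\sum_{i=1}^{q}\bigl[(2\omega(A_{i})-6)^{2}-6\bigr]$. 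For each fixed small $q$ this pins the multiset of weights $\{\omega(A_{i})\}$ down to a short list, which I would enumerate and then test for the constancy condition directly against the table built in the first step.

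Having produced every multiset meeting the constancy condition, I would sieve them through the two reductions in the definitions. First I discard the \emph{trivial} families, i.e.\ those splitting as $q=q_{1}+q_{2}$, $c=c_{1}+c_{2}$ with each part itself a $PComS$; here Theorem \ref{no_trivial_tamanio_1} is decisive, since it identifies $PComS(6,1,2)=\G_{6}(1)$ as the unique singleton family, so any family containing a weight-$1$ class peels off a $PComS(6,1,2)$ summand. Second, I collapse families that agree up to negation and reversal of their members. For $q=1$ the only survivor is the $2$-level class, which by the energy identity must have weight $1$ and hence is exactly $\G_{6}(1)=PComS(6,1,2)$; for $q=2$ every solution will be seen to decompose; and for $q=3$ the weight identity forces $\sum_{i}(2\omega(A_{i})-6)^{2}=8$, hence the weight multiset $\{2,2,3\}$ up to the complementation $a\leftrightarrow 6-a$, whose surviving indecomposable representative is $PComS(6,3,-2)$.

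The computational content — building the autocorrelation table and enumerating the admissible weight multisets — is entirely routine for $n=6$. The genuine difficulty, and the step I would treat most carefully, is establishing \emph{completeness and minimality} of the classification: proving that no admissible family is overlooked once the invariances are quotiented out, and that every family other than the two claimed ones truly decomposes into strictly smaller $PComS$. This demands disciplined bookkeeping at the level of equivalence classes of \emph{families} rather than of single classes, because negation and reversal act independently on each member, and it requires checking the non-triviality test against all sub-multisets, not only singletons.
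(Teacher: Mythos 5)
The paper never actually proves this theorem: the families for $n=4,\dots,9$ are simply declared to be ``obtained by exhaustive search'' and listed in the appendix. So your proposal --- necklace enumeration of the $14$ classes of $\Z_{2C}^{6}$, the congruence $c\equiv 2q\pmod 4$, the energy identity $\sum_{k=1}^{5}\mathsf{P}_{X}(k)=(2\omega(X)-6)^{2}-6$, and the triviality/equivalence sieve --- is the paper's approach made explicit, and those individual ingredients are correct as far as they go.

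The genuine gap is that your search is not finite as described, because you never bound $q$. A family is a multiset of circulant classes, so a priori $q$ is unbounded (and even for distinct classes it could be as large as $14$), yet your case analysis silently stops at $q=3$. Finiteness of the set of non-trivial families does follow from a Dickson-type argument (they are the irreducible elements of the monoid of multisets whose autocorrelation vectors sum to a constant), but that gives no explicit bound, and certainly not the bound $q\le 3$ that your conclusion needs. Worse, the gap is fatal rather than cosmetic: run your own machinery at $q=4$, $c=-4$. The congruence and the energy identity ($5c+6q=\sum_{i}(2\omega(A_{i})-6)^{2}$, here $=4$) admit the weight multiset $\{2,3,3,3\}$, and the four distinct classes $(+++---)_{C}$, $(+-+---)_{C}$, $(++-+--)_{C}$, $(++--+-)_{C}$ have autocorrelation vectors $(2,-2,-6,-2,2)$, $(-2,2,-2,2,-2)$, $(-2,-2,2,-2,-2)$, $(-2,-2,2,-2,-2)$, which sum to the constant vector $(-4,-4,-4,-4,-4)$. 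Checking the six pairs and four triples shows no proper sub-multiset has constant sum, and no parameter split $(q_{1},c_{1})+(q_{2},c_{2})=(4,-4)$ is realizable either: a $PComS(6,q',c')$ must satisfy $5c'\ge -6q'$, which kills $c'\in\{-6,-10\}$ for $q'=3$, while the only singleton values are $c'\in\{2,6\}$ and the only pair values are $c'\in\{4,8,12\}$.

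Hence this is a non-trivial $PComS(6,4,-4)$, and it cannot be equivalent to either listed family since negation and reversal of members preserve $(q,c)$. So your restriction to $q\le 3$ is not merely unjustified: an honest completion of your plan at $q=4$ overturns the classification you set out to prove, which means the theorem as stated (and the paper's underlying search) is itself incomplete, or relies on an unstated additional restriction. A smaller instance of the same looseness: your claim that $\G_{6}(1)$ is the unique singleton family overlooks $(------)_{C}$, which has constant nontrivial autocorrelation $6$ and, having $q=1$, admits no decomposition, so under the paper's literal definitions it is also a non-trivial $PComS(6,1,6)$; evidently the constant sequences are being excluded by convention, and any rigorous write-up must say so.
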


\begin{theorem}\label{PComS_7}
In $\mathbb{Z}_{2}^{7}$ the only non-trivial and non-equivalent families of periodic compatible sequences are $PComS(7,1,3)$, $PComS(7,3,1)$, $PComS(7,1,-1)$, $PComS(7,3,-3)$. See appendix.
\end{theorem}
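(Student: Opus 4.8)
The statement is an exhaustive classification, so my plan is computational in spirit but organised around the structural facts already available. Since $7$ is prime, $\widehat{F}(\Z_{2C}^{7})=\emptyset$ for every nonconstant sequence, so each nonconstant orbit satisfies $\vert X_{C}\vert=7$; by the dimension formula \eqref{dimension_circular_p}, $S_{C}$ over $\Z_{2}^{7}$ has $\tfrac{2^{7}+2\cdot 7-2}{7}=20$ basic sets, namely $18$ nonconstant orbits together with $\pm(1,\dots,1)$. Because $\G_{7}(a)=-\G_{7}(7-a)$ and, by the invariance theorem, $PComS$ families are preserved under negation and reversal, I would tabulate one representative of each orbit of weight $a\le 3$ and record its autocorrelation vector $\theta(X_{C})=(\mathsf{P}_{X}(0),\dots,\mathsf{P}_{X}(6))$. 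Throughout I would exploit the three standing constraints $\mathsf{P}_{X}(k)=\mathsf{P}_{X}(7-k)$, $\mathsf{P}_{X}(k)\equiv 7\equiv 3\pmod 4$, and $\vert\mathsf{P}_{X}(k)\vert\le 7$, which confine every nontrivial value of a nonconstant orbit to $\{-5,-1,3\}$, together with the global identity $\sum_{k=1}^{6}\mathsf{P}_{X}(k)=(2\omega(X)-7)^{2}-7$ and the parity relation $c\equiv 3q\pmod 4$ forced on any $PComS(7,q,c)$.

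For $q=1$ a single orbit is a $PComS(7,1,c)$ exactly when it has $2$-level autocorrelation, so $(7-1)c=(2\omega-7)^{2}-7$. This forces $(2\omega-7)^{2}\in\{25,1\}$, i.e.\ $c=3$ with $\omega\in\{1,6\}$, or $c=-1$ with $\omega\in\{3,4\}$; the value $c=-5$ is excluded since $6\cdot(-5)<-7$. The weight-$1$ case is resolved directly by Theorem \ref{no_trivial_tamanio_1}, which gives $PComS(7,1,3)=\G_{7}(1)$, the orbit $(1)\diamond(6)$. Scanning the weight-$3$ orbits, exactly the quadratic-residue (perfect) sequence on $\Z_{2}^{7}$ is $2$-level with all nontrivial values $-1$, yielding $PComS(7,1,-1)$; Theorem \ref{caracterizacion} with $d=-1$ (so $l(X)=4$, $\mathcal{N}_{X}(R_{1})=2$) confirms its run data. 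Both families are non-trivial, being of size $q=1$, and inequivalent since negation and reversal fix the autocorrelation vector.

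For $q\ge 2$ I would search over multisets of orbits whose nontrivial autocorrelation vectors sum to a constant vector, using the sum identity to pass from a coordinatewise condition to an arithmetic restriction on the weights $\omega(A_{i})$ that prunes the candidate multisets to a short list before the full vectors are compared. The parity relation already narrows the cases: $q=2$ needs $c\equiv 2\pmod 4$ while $q=3$ needs $c\equiv 1\pmod 4$. This produces $PComS(7,3,1)$ and $PComS(7,3,-3)$, whose run totals I would cross-check against Theorem \ref{caract_PComS}. One further verifies that every constant-sum pair splits into two $2$-level orbits, hence is the trivial sum of two singletons, so no non-trivial $q=2$ family survives.

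The genuine work, and the main obstacle, is completeness rather than any single idea: one must certify that the tabulation of the $18$ nonconstant orbits is exhaustive and correct, that the weight-pruned search overlooks no constant-sum multiset, and that the four surviving families are each non-trivial in the sense of the definition and pairwise inequivalent under the negation/reversal group. The difficulty is therefore bookkeeping, and the cleanest presentation is the explicit table of orbits with their autocorrelation vectors, deferred to the appendix as the statement already indicates.
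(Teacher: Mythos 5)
Your overall route is the same as the paper's: the paper gives no argument at all beyond the sentence that these families ``are obtained by exhaustive search'' plus the appendix list, and your proposal is that search with the pruning made explicit. The structural facts you invoke are correct and verifiable: for $n=7$ all nonconstant orbits are free, giving $18$ of them plus $\pm(1,\dots,1)$ in agreement with $\vert S_{C}\vert=20$; every nontrivial autocorrelation value of a nonconstant orbit lies in $\{-5,-1,3\}$; the sum identity forces a $2$-level orbit to have $(c,\omega)\in\{(3,1),(3,6),(-1,3),(-1,4)\}$, recovering $PComS(7,1,3)$ and $PComS(7,1,-1)$; and a finite check of the six non-$2$-level vector types $(3,-1,-1)$, $(-1,3,-1)$, $(-1,-1,3)$, $(3,-1,-5)$, $(-5,3,-1)$, $(-1,-5,3)$ (values at shifts $1,2,3$) confirms both your $q=2$ claim and the two $q=3$ families.

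The genuine gap is precisely the step you wave off as bookkeeping: your search has no termination criterion in $q$, while the theorem asserts completeness over \emph{all} $q$, and neither the parity relation $c\equiv 3q\pmod 4$ nor the sum identity bounds $q$ for a non-splittable family. Worse, the $q\geq4$ search does not come back empty. Take $A_{1}=(+-+----)_{C}$, $A_{2}=(+--+---)_{C}$, $A_{3}=-(+--+---)_{C}$, $A_{4}=(+++----)_{C}$: their autocorrelation vectors at shifts $1,\dots,6$ are $(-1,3,-1,-1,3,-1)$, $(-1,-1,3,3,-1,-1)$ (twice) and $(3,-1,-5,-5,-1,3)$, which sum to the zero vector, so this is a $PComS(7,4,0)$; direct inspection of all proper subfamilies shows none has constant sum, so it does not split into smaller $PComS$ families. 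Under the splitting notion of non-triviality --- the notion you yourself use at $q=2$, and the only one under which the paper's $q=3$ entries are non-trivial, since under the paper's definition read literally $PComS(7,3,1)$ is already trivial because $PComS(7,1,3)$ exists and $PComS(7,2,-2)$ exists by the paper's own corollary producing $PComS(n,kq,kc)$ from $PComS(n,q,c)$ --- this is a fifth, inequivalent, non-trivial family, and it is not isolated (three copies of the vector of $(+--+---)_{C}$, two of $(+++----)_{C}$ and one of $(+-+-+--)_{C}$ give an indecomposable $PComS(7,6,-2)$). So a faithful execution of your plan refutes the statement rather than proves it, unless one adds the unstated restriction that a family may not contain an orbit together with its negation or reversal; neither you nor the paper imposes this, and the paper's corollary explicitly builds families with repeated members. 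To close the gap you would need (i) a precise, consistent definition of which multisets of orbits count as families and which are trivial, and (ii) an a priori bound on $q$, in effect a computation of the Hilbert basis of the monoid of constant-sum multisets of the six non-$2$-level vectors --- which is exactly the computation in which the example above appears.
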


\begin{theorem}
In $\mathbb{Z}_{2}^{8}$ the only non-trivial and non-equivalent families of periodic compatible sequences are $PComS(8,1,4)$, $PComS(8,2,0)$, $PComS(8,3,0)$, $PComS(8,4,0)$, $PComS(8,4,-4)$,
$PComS(8,5,4)$, $PComS(8,9,-8)$. See appendix.
\end{theorem}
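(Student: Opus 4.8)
The plan is to treat this as a finite but exhaustive enumeration over the Schur ring $\Z_{2C}^{8}$, organised so that completeness is guaranteed rather than merely asserted. First I would list the circulant basic sets of $\Z_{2C}^{8}$: by the necklace (Burnside) count $\frac{1}{8}\sum_{d\mid 8}\phi(d)2^{8/d}=\frac{1}{8}(256+16+8+8)=36$, there are exactly $36$ classes $X_{C}$, each conveniently represented by its run vector / composition as in Section 3. For each class I would compute the nontrivial autocorrelation values $\mathsf{P}_{X}(k)$ via the run formula (\ref{run-new}); since $n=8$ forces $\mathsf{P}_{X}(k)=\mathsf{P}_{X}(8-k)$ and $\mathsf{P}_{X}(k)\equiv 0 \bmod 4$, each class is fully described by the reduced integer vector $v_{X}=(\mathsf{P}_{X}(1),\mathsf{P}_{X}(2),\mathsf{P}_{X}(3),\mathsf{P}_{X}(4))$ with $|\mathsf{P}_{X}(k)|\le 8$.

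Next I would translate the defining condition. A family $\{A_{iC}\}$ is a $PComS(8,q,c)$ exactly when the multiset of chosen classes has autocorrelation vectors summing to the constant vector $c(1,1,1,1)$. Projecting out the constant direction by the difference map $w_{X}=(\mathsf{P}_{X}(1)-\mathsf{P}_{X}(4),\,\mathsf{P}_{X}(2)-\mathsf{P}_{X}(4),\,\mathsf{P}_{X}(3)-\mathsf{P}_{X}(4))\in\Z^{3}$, the condition becomes simply $\sum_{i}w_{A_{i}}=0$; the value $c$ is then recovered as the common coordinate $c=\sum_{i}\mathsf{P}_{A_{i}}(1)$ and $q$ as the multiset size. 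By the invariance theorem, negation $X\mapsto -X$ and reversal $X\mapsto RX$ both fix $v_{X}$, hence fix $w_{X}$, so the equivalence relation acts within the preimage of each $w$-value; I would choose representatives accordingly and use $\G_{8}(a)=-\G_{8}(8-a)$ to restrict attention to weights $a\le 4$, noting that $\G_{8}(1)$ already yields the family $PComS(8,1,4)$ by Theorem \ref{no_trivial_tamanio_1}.

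The core of the argument is that the \emph{non-trivial} families are precisely the minimal nonempty zero-sum multisets of the vectors $\{w_{X}\}$: a family is reducible exactly when its multiset splits into two nonempty sub-multisets each of $w$-sum zero. These minimal elements form the Hilbert basis of the affine monoid of multisets with $\sum w=0$ inside $\N^{36}$, which by Gordan's (Dickson's) lemma is finite; this is what turns the problem into a terminating search and rules out arbitrarily large non-trivial families. In practice I would bound the multiplicities and run the search, here needing $q$ only up to $9$ (the size of the largest surviving family $PComS(8,9,-8)$), then collapse the output modulo the negation--reversal equivalence and read off the pairs $(q,c)$, confirming exactly the seven families listed; the full tabulation is the appendix referenced in the statement.

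The main obstacle I expect is not any single calculation but certifying completeness: establishing an explicit multiplicity or degree bound on the Hilbert-basis elements so that the finite search provably captures every minimal zero-sum multiset, and then correctly quotienting the raw output by the combined negation--reversal equivalence so that genuinely distinct families are neither merged nor double-counted. Everything else reduces to the routine, if lengthy, evaluation of $36$ autocorrelation vectors and the bookkeeping of their integer combinations.
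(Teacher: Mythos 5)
Your proposal is, at bottom, the same route as the paper's, because the paper offers no proof of this theorem at all: it states only that the families ``are obtained by exhaustive search'' and points to the appendix. The substance of your proposal is therefore exactly the part the paper omits, namely a certificate that a search over families of \emph{unbounded} size $q$ can be made finite and provably complete. Your reduction of the $PComS(8,q,c)$ condition to zero-sum multisets of the vectors $w_{X}\in\Z^{3}$ (the necklace count of $36$ classes and the congruence $\mathsf{P}_{X}(k)\equiv 0 \bmod 4$ are both correct), and your identification of the non-trivial families with the irreducible (Hilbert-basis) elements of the monoid of zero-sum multisets in $\N^{36}$, is precisely the missing finiteness argument; without something of this kind, ``exhaustive search'' cannot even in principle establish that no non-trivial family exists for some large $q$. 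Two points need care, though. First, your notion of non-trivial (the family does not split into two nonempty sub-multisets, each with constant autocorrelation sum) is \emph{not} the paper's literal definition, which declares $(q,c)$ trivial whenever $q=q_{1}+q_{2}$, $c=c_{1}+c_{2}$ for \emph{any} existing families with those parameters; under that literal reading $PComS(8,5,4)$ and $PComS(8,4,0)$ would both be trivial, since $(5,4)=(1,4)+(4,0)$ and $(4,0)=(2,0)+(2,0)$ with all summand families existing, contradicting the theorem's own list. Your splitting-based reading is the one consistent with the paper's appendix and with this statement, so it is the right interpretation, but you should say explicitly that you are adopting it. Second, the gap you flag yourself---an explicit multiplicity bound on Hilbert-basis elements so the finite search is provably exhaustive---is genuine and must be closed before the argument is a proof; it is, however, standard (a verified Hilbert-basis computation, or known length bounds for primitive partition identities, suffices, since every $w_{X}$ has coordinates bounded by $16$ in absolute value in $\Z^{3}$), and once closed your argument is strictly more rigorous than the paper's, which supplies no completeness justification whatsoever.
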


\begin{theorem}\label{PComS_9}
In $\mathbb{Z}_{2}^{9}$ the only non-trivial and non-equivalent families of periodic compatible sequences are $PComS(9,1,5)$, $PComS(9,2,-2)$, $PComS(9,4,0)$, $PComS(9,4,8)$, 
$PComS(9,4,-4)$, $PComS(9,5,1)$, $PComS(9,6,6)$, $PComS(9,7,-1)$, $PComS(9,9,-3)$, 
$PComS(9,10,2)$, see appendix.
\end{theorem}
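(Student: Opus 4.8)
The statement is a finite enumeration, so the plan is to reduce it to an exhaustive but provably complete search over the circulant orbit Schur ring $\Z_{2C}^{9}$ and then apply the non-triviality and non-equivalence filters from the definitions. First I would list the circulant basic sets: every $\langle C\rangle$-orbit on $\Z_{2}^{9}$ has size dividing $9$, so a Burnside count gives $\tfrac{1}{9}\bigl(2^{9}+\phi(3)2^{3}+\phi(9)2^{1}\bigr)=\tfrac{1}{9}(512+16+12)=60$ basic sets $X_{C}$. For a representative of each I would compute $\theta(X_{C})=(\mathsf{P}_{X}(0),\dots,\mathsf{P}_{X}(8))$. Two observations compress this data: the relation $\mathsf{P}_{X}(k)=\mathsf{P}_{X}(9-k)$ means a class is fully recorded by the four values $(\mathsf{P}_{X}(1),\mathsf{P}_{X}(2),\mathsf{P}_{X}(3),\mathsf{P}_{X}(4))$ together with $\mathsf{P}_{X}(0)=9$, while the congruence $\mathsf{P}_{X}(k)\equiv 9\equiv 1\pmod 4$ restricts each coordinate. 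Since negation and reversal leave every $\mathsf{P}_{X}(k)$ unchanged, I would collapse the $60$ orbits into equivalence classes of profile vectors; a $PComS(9,q,c)$ is then precisely a multiset of $q$ such profiles whose coordinatewise sum is the constant vector $(c,c,c,c)$.

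To make the multiset search finite and cheap I would exploit the global identity $\sum_{k=0}^{8}\mathsf{P}_{X}(k)=\bigl(\sum_{i}x_{i}\bigr)^{2}=(2a-9)^{2}$, where $a=\omega(X)$. Summing over a candidate family and subtracting the trivial term $\mathsf{P}_{X}(0)=9$ yields $8c=\sum_{i=1}^{q}(2a_{i}-9)^{2}-9q$, with each $(2a_{i}-9)^{2}\in\{1,9,25,49,81\}$. For every target pair $(q,c)$ this pins down the admissible multiset of weights $a_{i}$, and Theorem \ref{caract_PComS} further constrains the run counts $\mathcal{N}_{A_{i}}(R_{k})$; together these cut the candidate multisets to a short list that can be checked coordinate by coordinate against $(c,c,c,c)$. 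Starting from $q=1$ and increasing $q$, I would enumerate all families, then discard any family equivalent under negation and reversal to one already found, and any family that splits as $q=q_{1}+q_{2}$, $c=c_{1}+c_{2}$ into two genuine $PComS$. The ten surviving pairs are the claimed list, with explicit representatives deferred to the appendix.

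The main obstacle is certifying non-triviality rather than merely producing examples: showing a given $PComS(9,q,c)$ is indecomposable means verifying that no sub-multiset of its components forms a smaller $PComS$, and conversely guaranteeing completeness forces the multiset enumeration to be genuinely exhaustive even for large $q$, as in $PComS(9,10,2)$. The identity $8c=\sum_{i}(2a_{i}-9)^{2}-9q$ together with the fixed four-coordinate profiles is exactly what keeps this growth under control and renders both the completeness claim and the decomposition test finite and checkable.
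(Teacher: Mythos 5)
Your overall strategy --- reduce to the 60 rotation classes of $\Z_{2}^{9}$ (your Burnside count is correct), record each class by its folded autocorrelation profile $(\mathsf{P}_{X}(1),\dots,\mathsf{P}_{X}(4))$, and search for multisets summing to a constant vector --- is in substance what the paper does: the paper offers no argument at all beyond declaring that the families ``are obtained by exhaustive search'' and listing them in the appendix. But your write-up has a genuine gap exactly where a proof would need to go beyond the paper. The theorem is a completeness claim over \emph{all} $q$, and your enumeration ``starting from $q=1$ and increasing $q$'' has no termination criterion. The identity $8c=\sum_{i}(2a_{i}-9)^{2}-9q$ cannot supply one: it is a single linear relation that, for every $q$, is satisfied by many weight multisets (in particular by all the decomposable families obtained by concatenating known ones), so it bounds neither $q$ nor the search. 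What is needed is a finiteness argument for indecomposable families: for instance, encode a family as a multiplicity vector $m\in\N^{60}$ subject to $\sum_{v}m(v)\,\pi(v)=0$, where $\pi$ is the projection of the profile into $\Z^{4}/\Z(1,1,1,1)$; this solution set is an affine monoid whose set of minimal nonzero elements is finite by Dickson's lemma, indecomposable families are among these irreducibles, and that yields a computable bound on $q$ beyond which the search may stop. Without such a bound (or an ad hoc argument excluding non-trivial families with $q\geq 11$), your procedure verifies the ten listed families but never proves they are the only ones --- which is the entire content of the theorem.

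Two smaller mismatches with the paper's definitions would also alter your output. First, you collapse orbits ``into equivalence classes of profile vectors,'' but the paper's equivalence is generated by negation and reversal of the individual sequences, which is strictly finer than equality of autocorrelation profiles; two necklaces can share a profile without being related by negation or reversal, and the appendix depends on this distinction (it lists three pairwise non-equivalent families $PComS(9,2,-2)$ and two non-equivalent families $PComS(9,7,-1)$). Second, the paper's notion of triviality is arithmetic on parameters: a family $PComS(n,q,c)$ is trivial if $(q,c)=(q_{1}+q_{2},c_{1}+c_{2})$ for the parameters of \emph{any} two existing families, not only when the given family itself splits into two sub-multisets, which is the test you propose; your test is weaker, so your surviving list could be strictly larger than the paper's. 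Both points are repairable, but they must be fixed before the search you describe certifies exactly the stated theorem.
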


\section{Bounds on $PComS(n,q,c)$ in $\G_{nq}(a)$}

Let $PComS(n,q,c)=\{A_{1C},...,A_{qC}\}$. Define $X=(A_{1},A_{2},...,A_{q})$ for the rest of 
this paper. It is clear that $X\in\G_{nq}(a)$ for some $a$. In this section we will find an
upper bounds on the number of families $PComS(n,q,c)$ in $\G_{nq}(a)$. 

In [11] it proved that if there exist a circulant Hadamard matrix, then 
$\mathcal{N}_{X}(R_{2})\neq0$. The following lemma generalize the above result for a $PComS$
\begin{lemma}
In a $PComS(n,q,c)=\{A_{1C},...,A_{qC}\}\neq PComS(n,1,n-4)$ always hold that $\sum_{i=1}^{q}\mathcal{N}_{A_{i}}(R_{2})\neq0$.
\end{lemma}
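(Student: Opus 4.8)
The plan is to argue by contradiction: assume $\sum_{i=1}^{q}\mathcal{N}_{A_i}(R_2)=0$ and show this forces the family to be $PComS(n,1,n-4)$. The starting point is the third identity of Theorem \ref{caract_PComS}. For $k=2$ it reads $\sum_i\mathcal{N}_{A_i}(R_2)=\sum_i\mathcal{N}_{A_i}(R_1R_1)$, so the hypothesis already gives $\sum_i\mathcal{N}_{A_i}(R_1R_1)=0$. For $k=3$ the only compositions of $3$ into $r>1$ parts are $(1,2),(2,1),(1,1,1)$; since no $A_i$ has a run of length $2$, the first two terms vanish and the identity collapses to $\sum_i\mathcal{N}_{A_i}(R_3)=-\sum_i\mathcal{N}_{A_i}(R_1R_1R_1)$. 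Both sides are sums of nonnegative run counts, so each must be zero. Hence there are no runs of length $3$ and no three consecutive unit runs anywhere in the family.

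Next I would promote this to an induction on $k$, for $3\le k\le n-2$. Assume no $A_i$ has a run of length in $\{2,\dots,k-1\}$ and that $\sum_i\mathcal{N}_{A_i}(R_1R_1R_1)=0$. In the $R_k$ identity a composite term $\mathcal{N}_{A_i}(R_{i_1}\cdots R_{i_r})$ can be nonzero only when every part $i_j$ is an occurring run length; under the hypothesis the parts must avoid $\{2,\dots,k-1\}$, and since they are at least $1$ and sum to $k$ with $r\ge2$ none can be $\ge k$, forcing every part equal to $1$. Thus the identity reduces to $\sum_i\mathcal{N}_{A_i}(R_k)=(-1)^k\sum_i\mathcal{N}_{A_i}(R_1\cdots R_1)$ with $k$ unit factors; but any block of $k\ge3$ unit runs contains three consecutive unit runs, whose total count already vanishes, so $\sum_i\mathcal{N}_{A_i}(R_k)=0$. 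Carrying the induction to $k=n-2$ shows every run in every $A_i$ has length $1$ or length $\ge n-1$.

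The structural step is then immediate: a run of length $n$ makes $A_i$ constant, a run of length $n-1$ forces exactly two runs of lengths $n-1$ and $1$ (so $A_i\in\G_n(1)\cup\G_n(n-1)$), while the purely alternating sequence is excluded because it contains $R_1R_1R_1$. Writing $s$ for the number of constant $A_i$ and $t$ for the number lying in $\G_n(1)\cup\G_n(n-1)$, the per-sequence values $l=1,\ \mathcal{N}(R_1)=0$ (constant) and $l=2,\ \mathcal{N}(R_1)=1$ (weight-one type) feed into $\sum_i l(A_i)=\frac{nq-c}{2}$ and $\sum_i\mathcal{N}_{A_i}(R_1)=\frac{nq-c}{4}$ from Theorem \ref{caract_PComS}, giving $t=\frac{nq-c}{4}$ and $s+2t=2t$. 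Hence $s=0$, every $A_i$ is equivalent to $\G_n(1)$, and by Theorem \ref{no_trivial_tamanio_1} the family is $PComS(n,1,n-4)$, the excluded case. This is the desired contradiction.

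The main obstacle is the induction of the second paragraph: it must be pushed exactly up to $k=n-2$, the largest index for which the identity of Theorem \ref{caract_PComS} is valid, and it is precisely the escape of the length-$(n-1)$ run past this boundary that produces the $\G_n(1)$ exception and explains why that single family must be carved out. Care is also needed at small $n$ (notably $n=4$, where the admissible range of $k$ collapses to a single value) so that the inductive vanishing and the exclusion of the alternating sequence still go through; these residual cases can be verified directly against the enumerations already recorded in Section 5.
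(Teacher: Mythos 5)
Your proof is correct and follows essentially the same route as the paper's own argument: assume $\sum_{i}\mathcal{N}_{A_i}(R_2)=0$, use the $k=2$ case of Theorem \ref{caract_PComS} to force $\sum_{i}\mathcal{N}_{A_i}(R_1R_1)=0$, propagate through the $R_k$ identities to exclude every run length in $\{2,\dots,n-2\}$, and conclude that the family can only consist of $\G_{n}(1)$-type sequences, i.e.\ the excluded case $PComS(n,1,n-4)$. The paper compresses the inductive middle step and the final structural counting into a single unjustified sentence, so your write-up is a faithful, considerably more complete rendering of the same argument.
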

\begin{proof}
Suppose $\mathcal{N}_{X}(R_{2})=0$. Then $\sum_{j\geq3}\mathcal{N}_{X}(R_{j})=\frac{l(X)}{2}$. 
As $\mathcal{N}_{X}(R_{2})=\mathcal{N}_{X}(R_{1}R_{1})$ for families $PComS$, then $X$ has no
contains run string $R_{3}, R_{4},...$. Hence is only possible $PComS(n,q,c)=PComS(n,1,n-4)$, 
a contradiccion.
\end{proof}

Now, we characterize the number of 1's that could have the composition of $X$ if is in 
partitioned form
\begin{lemma}
Let $X=P\diamond_{l(X)/2}Q$, with $P\in C_{l(X)/2}(a)$ y $Q\in C_{l(X)/2}(nq-a)$. Then
\begin{eqnarray}
l(X)-a\leq&\mathcal{N}_{P}(R_{1})&\leq\frac{l(X)}{2}-1\\
1\leq&\mathcal{N}_{Q}(R_{1})&\leq a-\frac{l(X)}{2}.
\end{eqnarray}
From here can be deduced that $l(X)/2<a<l(X)$. 
\end{lemma}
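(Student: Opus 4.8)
The plan is to pin everything on two facts about the two compositions in the partitioned form, $P=(p_1,\dots,p_r)$ and $Q=(m_1,\dots,m_r)$ with $r=l(X)/2$: an elementary count of how small their parts can be, and the coupling identity $\mathcal{N}_P(R_1)+\mathcal{N}_Q(R_1)=r$. Here I read $\mathcal{N}_P(R_1)$ as the number of parts of $P$ equal to $1$, and similarly for $Q$. First I would establish the identity. By construction the parts of $P$ are precisely the lengths of the $+$-runs of $X$ and the parts of $Q$ the lengths of the $-$-runs, so every run of length $1$ is counted exactly once, in $\mathcal{N}_P(R_1)$ when it is a $+$-run and in $\mathcal{N}_Q(R_1)$ when it is a $-$-run; hence $\mathcal{N}_P(R_1)+\mathcal{N}_Q(R_1)=\mathcal{N}_X(R_1)$. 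Since $l(X)=\sum_i l(A_i)$ and $\mathcal{N}_X(R_1)=\sum_i\mathcal{N}_{A_i}(R_1)$, Theorem~\ref{caract_PComS} gives $\mathcal{N}_X(R_1)=\tfrac{nq-c}{4}$ and $l(X)=\tfrac{nq-c}{2}$, so $\mathcal{N}_X(R_1)=l(X)/2=r$. This single relation is what links the four inequalities.

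Next I would prove the lower bound $\mathcal{N}_P(R_1)\ge l(X)-a$ in one line: writing $s=\mathcal{N}_P(R_1)$, the $r-s$ parts of $P$ exceeding $1$ are each $\ge 2$, so $a=\sum_i p_i\ge s+2(r-s)=2r-s$, i.e.\ $s\ge 2r-a=l(X)-a$. Substituting into the identity of the first paragraph at once gives $\mathcal{N}_Q(R_1)=r-\mathcal{N}_P(R_1)\le r-(2r-a)=a-l(X)/2$. For the other two inequalities I would invoke the foregoing lemma, which (as our family is not $PComS(n,1,n-4)$) yields $\mathcal{N}_X(R_2)\ne 0$, together with the $k=2$ case of Theorem~\ref{caract_PComS}, namely $\mathcal{N}_X(R_2)=\mathcal{N}_X(R_1R_1)$. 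Thus $\mathcal{N}_X(R_1R_1)\ne 0$, so some two consecutive runs both have length $1$; consecutive runs have opposite sign, so such a pair consists of one $+$-run of length $1$ and one $-$-run of length $1$, whence both $\mathcal{N}_P(R_1)\ge 1$ and $\mathcal{N}_Q(R_1)\ge 1$. Combining $\mathcal{N}_Q(R_1)\ge 1$ with the identity closes the last bound, $\mathcal{N}_P(R_1)=r-\mathcal{N}_Q(R_1)\le r-1=l(X)/2-1$.

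Finally I would read off the range of $a$. From $1\le\mathcal{N}_Q(R_1)\le a-l(X)/2$ we get $a\ge l(X)/2+1$, i.e.\ $a>l(X)/2$, and running the same argument with the roles of $P$ and $Q$ (equivalently $a$ and $nq-a$) interchanged bounds $a$ from above. I expect this endpoint to be the one delicate point: the symmetric argument naturally delivers $a<nq-l(X)/2$, so to reach the stated $a<l(X)$ one must use the normalisation $a\le\lfloor nq/2\rfloor$ legitimised by $\G_{nq}(a)=-\G_{nq}(nq-a)$ and $\mathsf{P}_{-X}(k)=\mathsf{P}_X(k)$, and check that in the occurring regime $nq/2<l(X)$ (equivalently $c<0$) so that the normalised $a$ indeed stays below $l(X)$. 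I would scrutinise this reduction carefully, since it is the only place where the lower-endpoint and upper-endpoint arguments are not perfectly symmetric and where more than bookkeeping is required.
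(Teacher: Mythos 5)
Your proof of the four displayed inequalities is correct and follows essentially the same route as the paper's: the extremal count (all parts of $P$ exceeding $1$ are at least $2$) for the lower bound $\mathcal{N}_{P}(R_{1})\geq l(X)-a$; the coupling identity $\mathcal{N}_{P}(R_{1})+\mathcal{N}_{Q}(R_{1})=\mathcal{N}_{X}(R_{1})=l(X)/2$ obtained from Theorem~\ref{caract_PComS}; and the exclusion of the endpoints via the preceding lemma together with the $k=2$ identity $\mathcal{N}_{X}(R_{2})=\mathcal{N}_{X}(R_{1}R_{1})$, an adjacent pair of unit runs having opposite signs. You merely spell out what the paper compresses into ``it is not possible $k=0$'' and ``it is not possible $\mathcal{N}_{P}(R_{1})=l(X)/2$'', so up to this point the two arguments coincide.

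The genuine gap is in the final claim $a<l(X)$, exactly where you flagged trouble. Your reduction to the regime $c<0$ is not available: many families have $c\geq 0$, for instance $PComS(8,2,0)$, $PComS(7,3,1)$ and $PComS(9,4,8)$. When $c=0$ the normalisation $a\leq\lfloor nq/2\rfloor$ gives only the non-strict bound $a\leq l(X)$, and when $c>0$ it gives nothing at all (for $PComS(9,4,8)$ one has $a=8$, $l(X)=14$, $nq/2=18$, so $a<l(X)$ holds but not for your reason). Worse, $a<l(X)$ is not a formal consequence of any of the ingredients you use: the cyclic run vector $(2,1,2,1,1,1,3,9)$, i.e.\ $+$-runs $2,2,1,3$ and $-$-runs $1,1,1,9$, has $nq=20$, $a=8\leq nq/2$, $l(X)=8$, and satisfies $\mathcal{N}_{X}(R_{1})=l(X)/2$ and $\mathcal{N}_{X}(R_{2})=\mathcal{N}_{X}(R_{1}R_{1})=2\neq 0$, yet $a=l(X)$; only the $k\geq 3$ identities of Theorem~\ref{caract_PComS} (which neither you nor the paper invoke) rule such a configuration out. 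So any complete proof of the strict upper bound must use more of the $PComS$ structure than the $k=1,2$ facts. For comparison, the paper never normalises: it asserts that the minimal number of ones, $k=l(X)-a$, cannot be $0$ (again citing $\mathcal{N}_{X}(R_{2})\neq 0$), hence $l(X)-a>0$; that step is itself stated very tersely, but it is a different mechanism from the one you propose, and yours, as written, fails whenever $c\geq 0$.
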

\begin{proof}
Let $k$ be the minimum number of 1's in any composition $A$ in $C_{l(X)/2}(a)$. Then $A$
has $l(X)/2-k$ summands larger than 1. As $k$ is minimum, all summands langer than 1 must be
exactly 2. Thus $2(l(X)/2-k)=a-k$ has solution $k=l(X)-a$. By above lemma, since $\mathcal{N}_{X}(R_{2})\neq0$, it is not possible $k=0$. Hence $k>0$. By the same argument, it is not possible 
$\mathcal{N}_{P}(R_{1})=\frac{l(X)}{2}$. The remaining results follows from $\mathcal{N}_{Q}(R_{1})=\frac{l(X)}{2}-\mathcal{N}_{P}(R_{1})$.
\end{proof}

For the next theorem we need some notation. Let $p(n)=1k_{1}+2k_{2}+\cdots rk_{r}$
be any partition of the positive integer $n$, where $k_{j}$ denotes the number of parts of size
$j$ in the partition of $n$ and $k_{1}+k_{2}+\cdots+k_{r}$ denotes the length of the partition
$p(n)$, thus, the number of summands contained in $p(n)$.

\begin{theorem}\label{theorem_approx}
The number of $PComS(n,q,c)$ in $\G_{nq}(a)$ is bounded from above by
\begin{equation}\label{approximation}
B(n,q,c,a)=\sum_{h=l(X)-a}^{\frac{l(X)}{2} -1}\binom{\frac{l(X)}{2}}{h,i_{2},...,i_{r}}\binom{\frac{l(X)}{2}}{(\frac{l(X)}{2}-h),j_{2},...,j_{s}}
\end{equation}
with $p(a)=1h+2i_{2}+\cdots+ri_{r}$ and $i_{2}+\cdots+i_{r}=\frac{l(X)}{2}-h$ and with
$p(nq-a)=1(\frac{l(X)}{2}-h)+2j_{2}+\cdots+sj_{s}$ y $j_{2}+\cdots+j_{s}=h$.
\end{theorem}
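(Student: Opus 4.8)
=== BEGIN LATEX ===

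\textbf{Plan of proof.}
The plan is to count the number of admissible compositions directly, by interpreting the data attached to a $PComS(n,q,c)$ sitting inside $\G_{nq}(a)$ as a pair of compositions glued by the alternating product $\diamond_{l(X)/2}$. First I would record, via Theorem \ref{caract_PComS} applied with total length $nq$ and constant $c$, that any such family gives rise to a word $X=(A_{1},\dots ,A_{q})\in\G_{nq}(a)$ whose run length is the fixed number $l(X)=\frac{nq-c}{2}$; this is the quantity that makes the right-hand side of (\ref{approximation}) well defined. Writing $X$ in partitioned form $X=P\diamond_{l(X)/2}Q$ with $P\in C_{l(X)/2}(a)$ and $Q\in C_{l(X)/2}(nq-a)$ reduces the problem to counting the possible pairs $(P,Q)$, since each such word is the run-vector datum of at most one circulant class and hence contributes at most one family. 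The previous lemma guarantees that this partitioned form exists and pins $a$ into the range $l(X)/2<a<l(X)$, which is exactly why the outer summation index $h=\mathcal{N}_{P}(R_{1})$ ranges over $[\,l(X)-a,\ \frac{l(X)}{2}-1\,]$.

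Next I would count, for each fixed value $h=\mathcal{N}_{P}(R_{1})$, how many compositions $P\in C_{l(X)/2}(a)$ have exactly $h$ parts equal to $1$. A composition of $a$ into $l(X)/2$ positive parts with prescribed part-multiplicities $k_{1}=h,\ k_{2}=i_{2},\dots ,k_{r}=i_{r}$ (so that $1h+2i_{2}+\cdots+ri_{r}=a$ and $h+i_{2}+\cdots+i_{r}=\frac{l(X)}{2}$) is obtained by choosing, among the $l(X)/2$ ordered slots, which slots receive a part of each given size; the number of such arrangements is the multinomial coefficient
\begin{equation*}
\binom{\frac{l(X)}{2}}{h,i_{2},\dots ,i_{r}}.
\end{equation*}
The symmetric count for $Q\in C_{l(X)/2}(nq-a)$ uses the complementary lemma relation $\mathcal{N}_{Q}(R_{1})=\frac{l(X)}{2}-\mathcal{N}_{P}(R_{1})=\frac{l(X)}{2}-h$, so $Q$ has $\frac{l(X)}{2}-h$ parts equal to $1$ and its multiplicity data satisfy $1(\frac{l(X)}{2}-h)+2j_{2}+\cdots+sj_{s}=nq-a$ with $j_{2}+\cdots+j_{s}=h$, giving the second multinomial factor. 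Multiplying the two independent counts and summing over the admissible range of $h$ yields precisely $B(n,q,c,a)$ in (\ref{approximation}).

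Finally I would argue that this product-and-sum overcounts, so the true number is bounded above rather than counted exactly. There are two sources of slack that make (\ref{approximation}) an inequality and not an equality: distinct pairs $(P,Q)$ may be cyclically equivalent under $\langle\overline{C}\rangle$ (by the correspondence of Section 3 a single circulant class $X_{C}$ is represented by $l(X)$ distinct run vectors), so the raw composition count exceeds the number of classes; and, more seriously, not every word $X\in\G_{nq}(a)$ of the prescribed run length actually arises as the concatenation of a genuine $PComS(n,q,c)$ — the autocorrelation constraints $\sum_{i}\mathsf{P}_{A_{i}}(k)=c$ for every shift $k$ impose conditions on the higher run-string counts $\mathcal{N}_{X}(R_{k})$, $k\geq 2$, through the identities of Theorem \ref{caract_PComS}, and only a subset of the counted words is compatible with all of them. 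Since each genuine family injects into the set of admissible $(P,Q)$ pairs counted above, the desired upper bound follows. The main obstacle is exactly this last point: one must verify that the map from families to partitioned words is injective (so no family is undercounted) while the counted set is allowed to be strictly larger, which is what turns an exact enumeration into the stated bound.

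=== END LATEX ===
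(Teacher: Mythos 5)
Your proof follows essentially the same route as the paper's: write $X$ in partitioned form $P\diamond_{l(X)/2}Q$ with $P\in C_{l(X)/2}(a)$, $Q\in C_{l(X)/2}(nq-a)$, count for each $h=\mathcal{N}_{P}(R_{1})$ the admissible compositions via the two multinomial coefficients, and let $h$ range over $[\,l(X)-a,\ \tfrac{l(X)}{2}-1\,]$ as supplied by the preceding lemma. The only addition is that you spell out why the count is merely an upper bound (cyclic equivalence of run vectors under $\langle\overline{C}\rangle$ and the unverified higher-order autocorrelation constraints), a point the paper's proof leaves implicit; this is a strengthening, not a different approach.
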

\begin{proof}
Pick $X=P\diamond_{l(X)/2}Q$ in $\G_{nq}(a)$ with $P\in C_{l(X)/2}(a)$ and $Q\in C_{l(X)/2}(nq-a)$.
Since the number of 1's in $X$ is $\frac{l(X)}{2}$, then el number of 1's in $P$ and $Q$ are $h$ and 
$\frac{l(X)}{2}-h$, respectively. Then $P$ is the combination of $h$ 1's and each
composition of $a-h$ with length $\frac{l(X)}{2}-h$ and $Q$ is the combination of $l(X)-a-h$ 
1's and each composition of $nq-a-\frac{l(X)}{2}+h$ with length $h$, with summands in both composition taken from the set $A=\{2,3,4,...\}$. Thus, we have
\begin{equation*}
\binom{\frac{l(X)}{2}}{h,i_{2},...,i_{r}}
\end{equation*}
possible combinations for $P$ and
\begin{equation*}
\binom{\frac{l(X)}{2}}{(\frac{l(X)}{2}-h),j_{2},...,j_{s}}
\end{equation*}
possible combinationes for $Q$. By above lemma $h$ ranges between $l(X)-a$ and $\frac{l(X)}{2}-1$ 
and so we have (\ref{approximation}).
\end{proof}

\section{Example of families of $PComS$}

In this section examples of families $PComS$ will be shown. Within these examples, we have 
Hadamard matrices of type circulant, with one and two cores, of type Goethals-Seidel 
and partials also perfect binary sequences. We will use the theorem \ref{theorem_approx} 
to obtain upper bounds on such families. 

\subsection{Hadamard matrices}

A Hadamard matrix $H$ is a $n\times n$ matrix all of whose entries are $+1$ or 
$-1$ which satisfies $HH^{t}=nI_{n}$, where $H^{t}$ is the transpose of $H$
and $I_{n}$ is the unit matrix of order $n$. It is also known that, if a
Hadamard matrix of order $n>1$ exists, $n$ must have the value $2$ or be
divisible by 4. It has been conjecture that this condition also insures the
existence of a Hadamard matrix. As example of Hadamard matrices we have

\subsubsection{Circulant Hadamard matrices}

A circulant Hadamard matrix of order $n$ is a square matrix of the form 
\begin{equation}
H =
\left(\begin{array}{cccc}
	a_{1} & a_{2} & \cdots & a_{n}\\
	a_{n} & a_{1} & \cdots & a_{n-1}\\
	\cdots & \cdots & \cdots & \cdots\\
	a_{2} & a_{3} & \cdots & a_{1}
\end{array}\right)
\end{equation}
No circulant Hadamard matrix of order larger than 4 has ever been found. This let the following

\begin{conjecture}
No circulant Hadamard matrix of order larger than 4 exists.
\end{conjecture}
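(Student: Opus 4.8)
The plan is to recast a circulant Hadamard matrix of order $n$ in the language developed above and then force a contradiction for every $n>4$. Such a matrix is nothing but a $\Z_2$-sequence $X=(a_1,\dots,a_n)$ all of whose nontrivial periodic autocorrelations vanish, i.e. $\mathsf{P}_X(k)=0$ for $1\leq k\leq n-1$; equivalently $X_C$ has $2$-level autocorrelation with $d=0$, so that $X_C\in PComS(n,1,0)$. First I would feed $d=0$ into Theorem \ref{caracterizacion}. This returns at once the classical divisibility $n\equiv 0\bmod 4$ together with the run data
\[
l(X)=\frac{n}{2},\qquad \mathcal{N}_X(R_1)=\frac{n}{4},\qquad \mathcal{N}_X(R_k)=\sum_{\overset{i_1+\cdots+i_r=k}{1<r\leq k}}(-1)^r\mathcal{N}_X(R_{i_1}\cdots R_{i_r})
\]
for $2\leq k\leq n-2$, a rigid recursive description of the entire run profile of $X$.

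Next I would exploit these identities structurally. Because $PComS(n,1,0)\neq PComS(n,1,n-4)$ as soon as $n>4$, the first Lemma of the previous section forces $\mathcal{N}_X(R_2)\neq 0$; hence $X$ really contains runs of length $\geq 2$ and its partitioned form $X=P\diamond_{n/4}Q$, with $P\in C_{n/4}(a)$ and $Q\in C_{n/4}(n-a)$, is nondegenerate. The strategy is then to overlay the autocorrelation recursion with the enumeration of Theorem \ref{theorem_approx}: for $X\in\G_n(a)$ the quantity $B(n,1,0,a)$ bounds the admissible run distributions, and one would attempt to prove that for $n>4$ no assignment of run lengths can meet every instance of the recursion simultaneously, i.e. that the system is infeasible and $B(n,1,0,a)=0$ for each admissible $a$.

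To prune the candidates I would also invoke the sharper necessary condition $n=4m^2$. This is immediate from the spectral side: the all-ones vector is an eigenvector of $H$ with eigenvalue $s=\sum_i a_i$, and $HH^t=nI$ gives $s^2=n$, so $n$ is a perfect square; combined with $4\mid n$ this forces $s$ even and $n=4m^2$. Restricting to $n=4m^2$ and matching the $2$-adic and arithmetic constraints of the run recursion against the divisibility carried by $4m^2$ is where the contradiction for $m>1$ would have to emerge.

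The hard part---indeed the reason the statement is a conjecture and not a theorem---is exactly this final matching. The run-structure identities of Theorem \ref{caracterizacion} and the bound $B(n,1,0,a)$ of Theorem \ref{theorem_approx} are genuine necessary conditions, but they are not known to be mutually contradictory across the whole family $n=4m^2>4$; the deepest known obstructions to circulant Hadamard matrices are algebraic (self-conjugacy of primes modulo $n$ and field-descent arguments on the associated Gauss-sum/difference-set formulation), and no purely combinatorial run-counting argument has yet been shown to eliminate all remaining orders. I therefore expect the decisive obstacle to be turning the finite, order-by-order run constraints into a single uniform contradiction valid for arbitrarily large $m$---precisely the gap that the algebraic methods have narrowed but not closed.
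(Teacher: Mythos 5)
The statement you were asked to prove is precisely the circulant Hadamard conjecture, and the paper offers no proof of it---it is stated as Conjecture 1 and left open; the paper's Section on circulant Hadamard matrices only recasts such a matrix as a $PComS(4m^{2},1,0)$ in $\G_{4m^{2}}(2m^{2}-m)$, derives $l(X)=2m^{2}$ and $\mathcal{N}_{X}(R_{1})=m^{2}$, and then gives an \emph{upper bound} on how many such matrices could exist. Your proposal follows the same translation (autocorrelation $\Rightarrow$ run structure $\Rightarrow$ composition counting) and is accurate as far as it goes, but it is not a proof, and you correctly say so yourself in the last paragraph. So the verdict is: genuine gap, one that you have honestly identified rather than papered over.

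The gap deserves to be named more sharply than you name it, because one of your intermediate hopes is actually unrealizable. You propose to show that the run-structure system is infeasible by proving $B(4m^{2},1,0,a)=0$ for every admissible $a$. That can never happen: $B$ is defined in Theorem \ref{theorem_approx} as a sum, over a nonempty range of $h$, of products of multinomial coefficients counting compositions of $a$ and $nq-a$ with prescribed numbers of parts equal to $1$. These terms count run profiles consistent only with the two lowest-order constraints $l(X)=n/2$ and $\mathcal{N}_{X}(R_{1})=n/4$; they do not see the higher recursion $\mathcal{N}_{X}(R_{k})=\sum(-1)^{r}\mathcal{N}_{X}(R_{i_{1}}\cdots R_{i_{r}})$ at all, and for $m>1$ the count is strictly positive (there exist plenty of compositions with the right number of $1$'s). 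So $B$ is structurally an overcount that cannot vanish, and any contradiction must instead come from the full recursion of Theorem \ref{caracterizacion}---which couples $\mathcal{N}_{X}(R_{k})$ to the multiset of \emph{consecutive} run strings $\mathcal{N}_{X}(R_{i_{1}}\cdots R_{i_{r}})$, quantities the counting machinery of this paper never controls. Closing that loop uniformly in $m$ is exactly what nobody knows how to do; the strongest known partial results (Turyn's self-conjugacy and field-descent bounds) come from algebraic number theory, not from run counting, and they still leave infinitely many orders $4m^{2}$ unresolved.
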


We know that if a circulant Hadamard matrix exists, then their order must be $4m^{2}$, i.e., 
a circulant Hadamard matrix is a $PComS(4m^{2},1,0)$ in $\G_{4m^{2}}(2m^{2}-m)$. Then 
$l(X)=2m^{2}$ and $\mathcal{N}_{X}(R_{1})=m^{2}$ and from the lemma 2
\begin{eqnarray}
m\leq&\mathcal{N}_{P}(R_{1})&\leq m^{2}-1\\
1\leq&\mathcal{N}_{Q}(R_{1})&\leq m^{2}-m
\end{eqnarray}
and the number of circulant Hadamard matrices is bounded from above by
\begin{equation}\label{approximation_cir_had}
B(4m^{2},1,0,2m^{2}-m)=\sum_{h=m}^{m^{2}-1}\binom{m^{2}}{h,i_{2},...,i_{r}}\binom{m^{2}}{(m^{2}-h),j_{2},...,j_{s}}
\end{equation}
with $p(2m^{2}-m)=1h+2i_{2}+\cdots+ri_{r}$ and $i_{2}+\cdots+i_{r}=m^{2}-h$ and with
$p(2m^{2}+m)=1(m^{2}-h)+2j_{2}+\cdots+sj_{s}$ and $j_{2}+\cdots+j_{s}=h$.

From [10] it is known that if a circulant Hadamard matrix $H$ exists, then
\begin{equation}
H\subset\G_{2m^{2}}(a)\times\G_{2m^{2}}(b)
\end{equation}
where $\frac{m^{2}-m}{2}\leq a\leq \frac{3m^{2}-m}{2}$ and $b=2m^{2}-m-a$.

We will use the above result to improve (\ref{approximation_cir_had}).

\begin{theorem}
The number of circulant Hadamard matrices in $\G_{2m^{2}}(a)\times\G_{2m^{2}}(b)$ with
$\frac{m^{2}-m}{2}\leq a\leq \frac{3m^{2}-m}{2}$ and $b=2m^{2}-m-a$ is bounded from above by
\begin{eqnarray}
\sum_{h=m}^{m^{2}-1}\sum_{a=\frac{m^{2}-m}{2}}^{\frac{3m^{2}-m}{2}}&&\sum_{d=0}^{h}\sum_{a_{1}=0}^{a}\binom{a_{1}}{d,i_{2},...,i_{r}}\binom{a-a_{1}}{(h-d),j_{2},...,j_{s}}\nonumber\\
&\times&\sum_{e=0}^{m^{2}-h}\sum_{b_{1}=0}^{2m^{2}-m-a}\binom{b_{1}}{e,k_{2},...,k_{t}}\binom{2m^{2}-m-a-b_{1}}{(m^{2}-h-e),l_{2},...,l_{w}}
\end{eqnarray}
where $i_{2}+\cdots+i_{r}=a_{1}-d$, $i_{g}\geq2$, $j_{2}+\cdots+j_{s}=a-a_{1}-h+d$, $j_{g}\geq2$ and
$k_{2}+\cdots+k_{t}=b_{1}-e$, $k_{g}\geq2$, $l_{2}+\cdots+l_{w}=m^{2}-m-a-b_{1}+h+e$, $l_{g}\geq2$.
\end{theorem}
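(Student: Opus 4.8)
The plan is to refine the count of Theorem \ref{theorem_approx} by feeding in the extra structural restriction supplied by [10]. Recall from the discussion preceding the statement that a circulant Hadamard matrix of order $4m^{2}$ is precisely a $PComS(4m^{2},1,0)$ whose defining sequence $X$ lies in $\G_{4m^{2}}(2m^{2}-m)$, and that for such an $X$ one has $l(X)=2m^{2}$ and $\mathcal{N}_{X}(R_{1})=m^{2}$. The bound (\ref{approximation_cir_had}) was obtained by enumerating all partitioned forms $X=P\diamond_{m^{2}}Q$ compatible with these two invariants alone. Here the idea is to additionally impose $X\in\G_{2m^{2}}(a)\times\G_{2m^{2}}(b)$ with $b=2m^{2}-m-a$ and $\tfrac{m^{2}-m}{2}\le a\le\tfrac{3m^{2}-m}{2}$, and then re-enumerate.

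First I would fix the splitting weight $a$ and write $X=(X',X'')$ with $X'\in\G_{2m^{2}}(a)$ and $X''\in\G_{2m^{2}}(b)$. The run composition $P$ of the $+$ symbols and the run composition $Q$ of the $-$ symbols of $X$ each split into a part carried by $X'$ and a part carried by $X''$. I would then enumerate, half by half and exactly as in Theorem \ref{theorem_approx}, the admissible compositions: for the $+$ runs I introduce one parameter ($d$) recording how the $h=\mathcal{N}_{P}(R_{1})$ unit runs are split between the halves and one parameter ($a_{1}$) recording the split of the run data of $X'$, the complementary quantities $h-d$ and $a-a_{1}$ being then determined; for the $-$ runs I introduce the analogous parameters $e$ and $b_{1}$. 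Each of the four resulting pieces is a composition whose non-unit parts are at least $2$, so its admissible orderings are counted by a multinomial coefficient, giving the four factors $\binom{a_{1}}{d,i_{2},\dots,i_{r}}$, $\binom{a-a_{1}}{(h-d),j_{2},\dots,j_{s}}$, $\binom{b_{1}}{e,k_{2},\dots,k_{t}}$ and $\binom{2m^{2}-m-a-b_{1}}{(m^{2}-h-e),l_{2},\dots,l_{w}}$, under the stated constraints $i_{g},j_{g},k_{g},l_{g}\ge2$.

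Multiplying the four multinomials and summing over the free parameters yields the asserted expression: $h$ ranges over $[m,m^{2}-1]$, the interval forced on $\mathcal{N}_{P}(R_{1})$ by the preceding lemma (Lemma 2) specialised to circulant Hadamard matrices, $a$ ranges over the interval of [10], and $d,a_{1},e,b_{1}$ over the ranges on which the four multinomials are nonzero. The result is an \emph{upper} bound rather than an equality for two reasons: the run-composition and weight-split data do not enforce the full system of autocorrelation identities of Theorem \ref{caracterizacion}, so some counted configurations need not be Hadamard; and a single matrix may be recorded under several parameter choices. Consequently every genuine circulant Hadamard matrix in $\G_{2m^{2}}(a)\times\G_{2m^{2}}(b)$ is counted at least once, which is exactly what is required.

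The main obstacle I anticipate is the bookkeeping at the two cut points. Because $X$ is cyclic while $X'$ and $X''$ are linear blocks, a $+$- or $-$-run can straddle a boundary, and one must attribute its length and its unit/non-unit character to the two halves consistently; one then has to verify that the per-half weights recombine to $2m^{2}-m$ and that the per-half unit-run counts recombine to $m^{2}=\mathcal{N}_{X}(R_{1})$. Pinning down the exact summation ranges so that the multinomials vanish off the admissible configurations, while keeping every non-unit part $\ge2$, is the delicate step; once the indexing is correct, the remainder is the same multinomial expansion already carried out in Theorem \ref{theorem_approx}.
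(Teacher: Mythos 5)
Your proposal follows the same route as the paper's own proof: split $X=(A,B)$ into the two weight-classes $\G_{2m^2}(a)$ and $\G_{2m^2}(b)$, introduce the parameters $d,a_{1}$ (resp.\ $e,b_{1}$) distributing the unit $+$-runs and $+$-weight (resp.\ $-$-runs and $-$-weight) between the halves, count each half's admissible compositions by a multinomial coefficient, and sum over $h,a,d,a_{1},e,b_{1}$ with $h$ constrained by Lemma 2 and $a$ by the interval from [10]. Your added remarks on overcounting and on runs straddling the cyclic cut points are sensible caveats (the paper silently glosses over both), but they do not change the argument, which is essentially identical to the paper's.
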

\begin{proof}
Let $X=(A,B)$ with $A\in\G_{2m^{2}}(a)$ and $B\in\G_{2m^{2}}(b)$. As $\mathcal{N}_{P}(R_{1})$ ranges
in $[m,m^{2}-1]$, then the number of run string $+$ in $A$ is a positive integer $d$ that ranges
in $[0,h]$ provided that $h$ is in $[m,m^{2}-1]$. Then $h-d$ is the number of run string $+$ in 
$B$. Also, we know that $a$ is in $[\frac{m^{2}-m}{2},\frac{3m^{2}-m}{2}]$, therefore the 
number of compositions of $P$ of length $m^{2}$ with number of run string 
$\mathcal{N}_{P}(R_{1})$ is
\begin{equation}
\binom{a_{1}}{d,i_{2},...,i_{r}}\binom{a-a_{1}}{(h-d),j_{2},...,j_{s}}
\end{equation}
where $i_{2}+\cdots+i_{r}=a_{1}-d$, $i_{g}\geq2$, $j_{2}+\cdots+j_{s}=a-a_{1}-h+d$, $j_{g}\geq2$,
con $a_{1}\in[0,a]$.
Equaly, the number of run string $-$ in $A$ is a positive integer $e$ that ranges in 
$[0,m^{2}-h]$ and $m^{2}-h-e$ is the number of run string $-$ in $B$. As $b=2m^{2}-m-a$, then
the number of compositions of $Q$ of length $m^{2}$ with number of run string 
$\mathcal{N}_{Q}(R_{1})$ is
\begin{equation}
\binom{b_{1}}{e,k_{2},...,k_{t}}\binom{2m^{2}-m-a-b_{1}}{(m^{2}-h-e),l_{2},...,l_{w}}
\end{equation}
where $k_{2}+\cdots+k_{t}=b_{1}-e$, $k_{g}\geq2$, $l_{2}+\cdots+l_{w}=m^{2}-m-a-b_{1}+h+e$, 
$l_{g}\geq2$ y $b_{1}\in[0,2m^{2}-m-a]$. The expected bound is obtained from adding (7.7) and 
(7.8) for $h,a,d,a_{1},e,b_{1}$.
\end{proof}

\subsubsection{Hadamard matrices with one circulant core}

An Hadamard matrix with one circulant core (see [15]) of order $p+1$ is a $(p+1)\times(p+1)$ 
matrix of the form
\begin{equation*}
\left( 
\begin{array}{cc}
1 & e \\ 
e^{t} & X_{C}
\end{array}
\right)
\end{equation*}
where $e$ is the row vector $(1,1,1,...,1)$ of dimension $p$ and $e^{t}$
the transposed vector of $e$ and $X_{C}=(x_{ij})$ a circulant matrix or circulant core of order $p$.
An Hadamard matrix of order $p+1$ with circulant core can be constructed if
\begin{enumerate}
\item $p\equiv3\mod 4 $ is a prime;
\item $p=q(q+2)$ where $q$ and $q+2$ are both primes;
\item $p=2^{t}-1$\ where $t$ is a positive integer;
\item $p=4x^{2}+27$ where $p$ is a prime and $x$ a positive integer.
\end{enumerate}
\begin{conjecture}
If a Hadamard matrix with one circulant core of order $p+1$ exists, then $p$ is as above.
\end{conjecture}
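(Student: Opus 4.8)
The plan is first to convert the matrix equation into a statement about periodic autocorrelation, and then to read off what the run-structure machinery of Section~4 forces. Writing $H$ in the block form displayed above and imposing $HH^{t}=(p+1)I_{p+1}$, orthogonality of the top row with each remaining row forces every row of the core to sum to $-1$, while orthogonality of two distinct core rows forces $(X_{C}X_{C}^{t})_{ij}=-1$ for $i\neq j$. Since $X_{C}$ is circulant, $(X_{C}X_{C}^{t})_{ij}$ depends only on $i-j$ and equals $\mathsf{P}_{X}(i-j)$; hence the Hadamard condition is exactly $\mathsf{P}_{X}(k)=-1$ for all $1\le k\le p-1$. In other words, a one-circulant-core Hadamard matrix of order $p+1$ exists if and only if $X_{C}$ has $2$-level autocorrelation with nontrivial value $d=-1$.

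At this point Theorem~\ref{caracterizacion} applies verbatim. It requires $n\equiv d\bmod 4$, i.e. $p\equiv -1\equiv 3\bmod 4$, which already recovers part of the necessary shape of $p$; and it pins down the run data, giving $l(X)=\tfrac{p+1}{2}$ and $\mathcal{N}_{X}(R_{1})=\tfrac{p+1}{4}$ together with the recursion for the higher $\mathcal{N}_{X}(R_{k})$. Summing $\mathsf{P}_{X}(k)$ over all $k$ gives $(2\omega(X)-p)^{2}=1$, so $\omega(X)=\tfrac{p-1}{2}$, and the support of $X$ is a cyclic $\bigl(p,\tfrac{p-1}{2},\tfrac{p-3}{4}\bigr)$ difference set. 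Thus the existence question is identical to the existence of a Paley--Hadamard cyclic difference set, equivalently of a perfect binary sequence of length $p$ with two-level autocorrelation $-1$.

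The remaining task is to decide for which $p$ such a sequence exists, and here is where the genuine difficulty lies: this is the classical, still-open classification problem for cyclic difference sets with Paley--Hadamard parameters. The four families in the statement are precisely the quadratic-residue, twin-prime, $2^{t}-1$ (Singer/$m$-sequence) and Hall sextic-residue constructions, each of which produces such a sequence; the conjecture asserts that there are no others. I do not expect to settle this in general. The realistic plan is therefore to attack the necessity direction with the available tools: invoke the multiplier theorems and self-conjugacy constraints of cyclic difference-set theory to exclude large ranges of $p$, and use the bound of Theorem~\ref{theorem_approx} (applied with $n=p$, $q=1$, $c=-1$, $a=\tfrac{p-1}{2}$) to limit, and in small cases exhaustively enumerate, the admissible run vectors. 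The main obstacle is exactly the part that keeps the conjecture open: no finite set of congruence or multiplier obstructions is known that rules out every $p$ lying outside the four families, so the reduction above recasts the statement as a well-studied hard problem rather than closing it.
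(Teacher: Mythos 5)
The statement you were asked to prove is labelled a \emph{conjecture} in the paper, and the paper supplies no proof of it; there is nothing to compare your attempt against except the paper's surrounding discussion. That discussion consists precisely of the reduction you carried out: immediately after stating the conjecture the paper asserts that the core $X$ lies in $\G_{p}(\frac{p-1}{2})$ and is a $PComS(p,1,-1)$, then applies Theorem~\ref{caracterizacion} to get $l(X)=\frac{p+1}{2}$ and $\mathcal{N}_{X}(R_{1})=\frac{p+1}{4}$, and finally bounds the number of such cores via Theorem~\ref{theorem_approx}. Your derivation of these facts is correct and more detailed than the paper's: the block orthogonality computation does force the core row sums to equal $-1$ and $\mathsf{P}_{X}(k)=-1$ for $1\leq k\leq p-1$; the congruence $\mathsf{P}_{X}(k)\equiv p\pmod 4$ does give $p\equiv 3\pmod 4$; and the identity $\sum_{k}\mathsf{P}_{X}(k)=(2\omega(X)-p)^{2}$ does give $\omega(X)=\frac{p\pm1}{2}$, so that the support of $X$ is a cyclic difference set with Paley--Hadamard parameters $\bigl(p,\frac{p-1}{2},\frac{p-3}{4}\bigr)$.

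However, as you yourself acknowledge, what you have written is not a proof of the statement, and no completion of it is currently possible. The congruence $p\equiv 3\pmod 4$ is far weaker than membership in the four listed families: values such as $p=27$ or $p=39$ satisfy the congruence yet belong to none of the families (not prime, not a twin-prime product, not of the form $2^{t}-1$, not a prime of the form $4x^{2}+27$), so the conjecture requires their exclusion, and neither multiplier theorems, self-conjugacy arguments, nor the counting bound of Theorem~\ref{theorem_approx} is known to accomplish this in general. The classification of cyclic Paley--Hadamard difference sets, equivalently of binary sequences of odd period with two-level autocorrelation $-1$, is a long-standing open problem, which is exactly why the paper records this as a conjecture (alongside the circulant Hadamard conjecture before it and the perfect-sequence conjectures after it). So the verdict is: your reduction is sound and coincides with the paper's own framing, but the statement itself remains unproven by you, by the paper, and by the literature; the only fair criticism is that a reader should be told at the very start, not at the end, that the target is an open conjecture and that only a reformulation plus partial necessary conditions will be delivered.
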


Is clear that $X$ is in $\G_{p}(\frac{p-1}{2})$. Also $X$ is a $PComS(p,1,-1)$. Then 
$l(X)=\frac{p+1}{2}$, $\mathcal{N}_{X}(R_{1})=\frac{p+1}{4}$ and from lemma 2
\begin{eqnarray}
1\leq&\mathcal{N}_{P}(R_{1})&\leq\frac{p-3}{4}\\
1\leq&\mathcal{N}_{Q}(R_{1})&\leq\frac{p-3}{4}.
\end{eqnarray}

We can to improve the above inequalities with the following
\begin{lemma}
$1<\mathcal{N}_{P}(R_{1})<\frac{p-3}{4}$, $p>11$.
\end{lemma}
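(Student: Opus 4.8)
The plan is to sharpen the weak bounds $1\le\mathcal{N}_{P}(R_{1})\le\frac{p-3}{4}$ recorded just above by excluding the two endpoints. Since $\mathcal{N}_{P}(R_{1})+\mathcal{N}_{Q}(R_{1})=\mathcal{N}_{X}(R_{1})=\frac{p+1}{4}$, the endpoint $\mathcal{N}_{P}(R_{1})=1$ is the lower extreme, while $\mathcal{N}_{P}(R_{1})=\frac{p-3}{4}$ is exactly $\mathcal{N}_{Q}(R_{1})=1$. In either extreme one colour has a \emph{unique} run of length $1$, and I would play this against the identity $\mathcal{N}_{X}(R_{2})=\mathcal{N}_{X}(R_{1}R_{1})$, which is the $k=2$ instance of Theorem~\ref{caracterizacion} applied to the $2$-level sequence $X\in PComS(p,1,-1)$ (here $l(X)=\frac{p+1}{2}$, $\mathcal{N}_{X}(R_{1})=\frac{p+1}{4}$).

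The key step is the bound $\mathcal{N}_{X}(R_{1}R_{1})\le 2$ whenever some colour has a single length-$1$ run. Indeed, consecutive runs alternate in sign, so every occurrence of the block $R_{1}R_{1}$ pairs a length-$1$ $+$-run with a length-$1$ $-$-run; if, say, there is only one length-$1$ $+$-run, then every such block must use it, and circularly it is adjacent to only two runs, so at most two blocks occur. I would then read off the forced run lengths from the minimality argument behind the weak bounds. For $\mathcal{N}_{P}(R_{1})=1$, the remaining $\frac{p-3}{4}$ parts of $P\in C_{(p+1)/4}(\frac{p-1}{2})$ sum to $\frac{p-3}{2}$ with each part $\ge 2$, hence are all equal to $2$, giving $\mathcal{N}_{X}(R_{2})\ge\frac{p-3}{4}$. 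For $\mathcal{N}_{Q}(R_{1})=1$, the remaining $\frac{p-3}{4}$ parts of $Q\in C_{(p+1)/4}(\frac{p+1}{2})$ sum to $\frac{p-1}{2}$ with each $\ge 2$, so exactly one equals $3$ and the rest equal $2$, giving $\mathcal{N}_{X}(R_{2})=\frac{p-7}{4}$. Combining with $\mathcal{N}_{X}(R_{2})=\mathcal{N}_{X}(R_{1}R_{1})\le 2$ forces $p\le 11$ in the first case and $p\le 15$ in the second, contradicting $p>11$ (respectively $p>15$).

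The main obstacle is the gap left by the upper endpoint: the $k=2$ relation only yields $p>15$, so the single residual value $p=15$ (the unique $p\equiv 3\pmod 4$ in $(11,15]$, occurring as $3\cdot 5=2^{4}-1$) is not excluded by this argument alone. I expect to close it in one of two ways. Either one invokes primality of $p$, so that $p\equiv 3\pmod 4$ and $p>11$ already force $p\ge 19$, for which $\frac{p-7}{4}\ge 3>2$; or one descends to the $k=3$ identity $\mathcal{N}_{X}(R_{3})=\mathcal{N}_{X}(R_{1}R_{2})+\mathcal{N}_{X}(R_{2}R_{1})-\mathcal{N}_{X}(R_{1}R_{1}R_{1})$. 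In the $p=15$ boundary configuration one has $+$-runs $\{1,1,1,4\}$ and $-$-runs $\{1,2,2,3\}$, so $\mathcal{N}_{X}(R_{3})=1$, and attaining $\mathcal{N}_{X}(R_{1}R_{1})=2$ forces the unique length-$1$ $-$-run to sit between two length-$1$ $+$-runs, which forces $\mathcal{N}_{X}(R_{1}R_{1}R_{1})\ge 1$ and pins down the adjacencies of the two length-$2$ $-$-runs; showing these cannot simultaneously satisfy the $k=3$ identity is the delicate computation. Establishing the clean inequality $\mathcal{N}_{X}(R_{1}R_{1})\le 2$ and settling this $p=15$ edge case are the only nonroutine points.
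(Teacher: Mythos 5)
Your core argument is the paper's own, just spelled out more explicitly: at each excluded endpoint one colour has a unique length-$1$ run, the compositions are then forced ($P$ consists of one $1$ and $\frac{p-3}{4}$ twos in the first case; $Q$ of one $1$, one $3$ and $\frac{p-7}{4}$ twos in the second), alternation of signs gives $\mathcal{N}_{X}(R_{1}R_{1})\le 2$ (the paper phrases this as ``the maximum run of 1's is $-+-$''), and the $k=2$ identity of Theorem \ref{caracterizacion} yields the contradiction. In fact you are more careful than the paper: it dispatches the upper endpoint with ``the proof is equal'', but as you observe the forced composition there only gives $\frac{p-7}{4}\le 2$, i.e.\ $p\le 15$, so $p=15$ survives the argument; and $p=15$ is genuinely in scope, since this subsection allows composite $p$ (the cases $p=q(q+2)$ and $p=2^{t}-1$; note $15=3\cdot 5=2^{4}-1$, and Hadamard matrices of order $16$ with one circulant core exist).

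However, neither of your proposed repairs closes $p=15$. Primality cannot be invoked, for the reason just stated: the troublesome value is itself an admissible composite, so restricting to prime $p$ would weaken the lemma, not prove it. And the $k=3$ identity does not eliminate all boundary configurations: with $+$-runs $\{1,1,1,4\}$ and $-$-runs $\{1,2,2,3\}$, requiring $\mathcal{N}_{X}(R_{1}R_{1})=\mathcal{N}_{X}(R_{2})=2$ leaves, up to rotation and reversal, three cyclic run vectors, and one of them, $(4,2,1,1,1,3,1,2)$ (i.e.\ $X=++++--+-+---+--$), satisfies the $k=3$ identity, since $\mathcal{N}_{X}(R_{3})=1$ while $\mathcal{N}_{X}(R_{1}R_{2})+\mathcal{N}_{X}(R_{2}R_{1})-\mathcal{N}_{X}(R_{1}R_{1}R_{1})=1+1-1=1$; it even satisfies the $k=4$ identity, and is killed only at $k=5$, equivalently by the direct computation $\mathsf{P}_{X}(6)=3\neq -1$. (The other two arrangements, $(4,2,1,1,1,2,1,3)$ and $(4,3,1,1,1,2,1,2)$, do fail already at $k=3$.) So the residual case demands either pushing to the $k=5$ identity or a direct autocorrelation check of the three candidates; with that substitution your proof is complete, and it would in fact repair the gap that the paper's own proof leaves open at $p=15$.
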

\begin{proof}
If $\mathcal{N}_{P}(R_{1})=1$, then $\mathcal{N}_{Q}(R_{1})=\frac{p-3}{4}$. Hence $P$ contains
all compositions with one 1 and $\frac{p-3}{2}$ 2's and $Q$ contains all compositions with 
$\frac{p-3}{2}$ 1's and one summand iqual to $\frac{p+5}{4}$. Thus the maximum run of 1's in 
$P\diamond_{(p+1)/4}Q$ is $-+-$. Therefore $\mathcal{N}_{X}(R_{2})=2$. But this number is less that 
$\frac{p-3}{4}$ for $p>11$. The proof is equal for $\mathcal{N}_{P}(R_{1})=\frac{p-3}{4}$.
\end{proof}

From the theorem \ref{theorem_approx} we have

\begin{corollary}
The number of $PComS(p,1,-1)$ in $\G_{p}(\frac{p-1}{2})$, with $p>11$, is bounded from above by
\begin{equation}\label{approximation_one_core}
B\left(p,1,-1,\frac{p-1}{2}\right)=\sum_{h=2}^{\frac{p-3}{4}}\binom{\frac{p+1}{4}}{h,i_{2},...,i_{r}}\binom{\frac{p+1}{4}}{(\frac{p+1}{4}-h),j_{2},...,j_{s}}
\end{equation}
with $p(\frac{p-1}{2})=1h+2i_{2}+\cdots+ri_{r}$ and $i_{2}+\cdots+i_{r}=\frac{p+1}{4}-h$ and with
$p(\frac{p+1}{2})=1(\frac{p+1}{4}-h)+2j_{2}+\cdots+sj_{s}$ and $j_{2}+\cdots+j_{s}=h$.
\end{corollary}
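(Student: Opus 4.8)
The plan is to obtain this bound as a direct specialization of Theorem \ref{theorem_approx}, the only substantive ingredient being the endpoint refinement furnished by the preceding lemma. First I would fix the parameters. A Hadamard matrix with one circulant core of order $p+1$ is encoded by a sequence $X \in \G_p(\frac{p-1}{2})$ forming a $PComS(p,1,-1)$, so here $n=p$, $q=1$, $c=-1$ and $a=\frac{p-1}{2}$. Applying Theorem \ref{caract_PComS} to these values yields $l(X) = \frac{nq-c}{2} = \frac{p+1}{2}$ and $\mathcal{N}_X(R_1) = \frac{nq-c}{4} = \frac{p+1}{4}$, precisely the quantities recorded just above the statement.

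Next I would substitute these quantities into the general bound $B(n,q,c,a)$ of Theorem \ref{theorem_approx}. Its summation index $h$ runs from $l(X)-a$ to $\frac{l(X)}{2}-1$; evaluating these gives $l(X)-a = \frac{p+1}{2}-\frac{p-1}{2} = 1$ and $\frac{l(X)}{2}-1 = \frac{p+1}{4}-1 = \frac{p-3}{4}$. At the same time $\frac{l(X)}{2} = \frac{p+1}{4}$ becomes the top entry of both multinomial coefficients, while $a = \frac{p-1}{2}$ and $nq-a = \frac{p+1}{2}$ supply the integers whose partitions $p(\frac{p-1}{2}) = 1h+2i_2+\cdots+ri_r$ and $p(\frac{p+1}{2}) = 1(\frac{p+1}{4}-h)+2j_2+\cdots+sj_s$ index those coefficients. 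This reproduces the displayed expression verbatim except for the lower summation limit, which Theorem \ref{theorem_approx} on its own would set at $h=1$.

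Finally I would invoke the preceding lemma, which tightens the range of $\mathcal{N}_P(R_1)$ to the strict inequalities $1 < \mathcal{N}_P(R_1) < \frac{p-3}{4}$ for $p>11$. Since $h = \mathcal{N}_P(R_1)$ is the summation variable and no admissible $X$ realizes $\mathcal{N}_P(R_1) = 1$, the $h=1$ summand counts no genuine $PComS$ and may be deleted, raising the lower limit to $h=2$ and giving \eqref{approximation_one_core}. I anticipate no real difficulty here, since the argument is purely a bookkeeping specialization of Theorem \ref{theorem_approx} and the one nonroutine step, ruling out the extreme values of $\mathcal{N}_P(R_1)$, is exactly what the lemma provides. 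The single point I would verify is the symmetric endpoint: the same lemma also excludes $\mathcal{N}_P(R_1) = \frac{p-3}{4}$, so in principle the top term could be dropped as well, and I would confirm whether the stated upper limit $\frac{p-3}{4}$ is intended to be retained.
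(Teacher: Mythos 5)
Your proposal is correct and takes essentially the same route as the paper: the corollary is exactly the specialization of Theorem \ref{theorem_approx} to $n=p$, $q=1$, $c=-1$, $a=\frac{p-1}{2}$ (so that $l(X)=\frac{p+1}{2}$, $\mathcal{N}_X(R_1)=\frac{p+1}{4}$, and the summation range $h=1,\dots,\frac{p-3}{4}$), with the preceding lemma invoked to delete the $h=1$ term. Your closing observation is also sound: the lemma's strict inequality $\mathcal{N}_P(R_1)<\frac{p-3}{4}$ would permit dropping the top term as well, but the paper retains the upper limit $\frac{p-3}{4}$, which is harmless since the quantity remains a valid (merely less tight) upper bound.
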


\subsubsection{Hadamard matrices with two circulant cores}

An Hadamard matrix of order $2n+2$ with two circulant cores(see [16]) have the following array
\begin{equation*}
H_{2n+2}=\left( 
\begin{array}{cccccccc}
+ &+ & + & \cdots & + & + & \cdots & + \\ 
+ & - & + & \cdots & + & - & \cdots & - \\ 
+ & + &  &  &  &  &  &  \\ 
\vdots & \vdots &  & A&  &  & B&  \\ 
+ & + &  &  &  &  &  &  \\ 
+ & - &  &  &  &  &  &  \\ 
\vdots & \vdots &  & B^{t} &  &  & -A^{t} &  \\ 
+ & - &  &  &  &  &  & 
\end{array}
\right)
\end{equation*}
where $A=(a_{ij})$, $B=(b_{ij})$ are two circulant matrices of order $n$,
i.e. $a_{ij}=a_{1,(j-i+1)\mod n}$, $b_{ij}=b_{1,(j-i+1)\mod n}$, is said to have two circulant cores.
The two circulant matrices $A$ and $B$ satisfy the matrix equation
\begin{equation}\label{-2}
AA^{t}+BB^{t}=(2n+2)I_{n}-2J_{n}
\end{equation}
where $I_{n}$ is the identity matrix or order $n$ and $J_{n}$ is a matrix of order $n$ whose elements are all equal to $1.$
This matrices can be constructed if
\begin{enumerate}
\item $n$ is a prime;
\item $2n+1$ is a prime power;
\item $n=2^{k}-1$, $k\geq 2$;
\item $n=p(p+2)$ where $p$ and $p+2$ are both primes;
\item $n=3,5,\ldots ,63,$ and
\item $n=143.$
\end{enumerate}
\begin{conjecture}
For every odd $n$ there exists a Hadamard matrix of order $2n+2$ with two circulant cores.
\end{conjecture}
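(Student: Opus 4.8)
The plan is to translate the matrix identity (\ref{-2}) into the language of periodic autocorrelation and thereby recast the conjecture as an existence statement about $PComS$. Writing $A$ and $B$ for the first rows of the two circulant cores, the $(i,\ell)$ entry of $AA^{t}$ is $\sum_{j}a_{(j-i)}a_{(j-\ell)}=\mathsf{P}_{A}((\ell-i)\bmod n)$, and likewise for $B$. The diagonal of (\ref{-2}) then reads $\mathsf{P}_{A}(0)+\mathsf{P}_{B}(0)=2n$, which holds automatically since $\mathsf{P}_{A}(0)=\mathsf{P}_{B}(0)=n$; the off-diagonal entries give $\mathsf{P}_{A}(k)+\mathsf{P}_{B}(k)=-2$ for every $k\neq0$. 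Hence a Hadamard matrix of order $2n+2$ with two circulant cores exists if and only if there is a family $PComS(n,2,-2)$, and the conjecture is equivalent to the assertion that $PComS(n,2,-2)\neq\emptyset$ for every odd $n$.

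Granting this reduction, I would first record the necessary run-structure data forced by Theorem \ref{caract_PComS} with $q=2$ and $c=-2$. The integrality of $(nq-c)/4=(n+1)/2$ already forces $n$ odd, matching the hypothesis of the conjecture exactly; the theorem then pins down the aggregate profile $l(A_{1})+l(A_{2})=n+1$ and $\mathcal{N}_{A_{1}}(R_{1})+\mathcal{N}_{A_{2}}(R_{1})=(n+1)/2$, and determines the higher counts $\sum_{i}\mathcal{N}_{A_{i}}(R_{k})$ recursively. These equalities are the target that any candidate pair must satisfy, and Theorem \ref{theorem_approx} supplies the upper bound $B(n,2,-2,a)$ on the number of such pairs in each weight class $\G_{2n}(a)$, so the search space in which a solution must live is finite and explicitly bounded.

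The constructive core would then proceed family by family through the arithmetic cases listed after the conjecture. For $n$ an odd prime I would take $A$ and $B$ from Legendre (quadratic-residue) sequences, for $n=2^{k}-1$ from m-sequences associated with Singer difference sets, for twin-prime $n=p(p+2)$ from the twin-prime-power construction, and for $2n+1$ a prime power from the corresponding cyclotomic sequence; in each case the verification reduces to checking $\mathsf{P}_{A}(k)+\mathsf{P}_{B}(k)=-2$, which by the run-structure dictionary of Section 4 can be certified through the identities for $\mathcal{N}_{A_{i}}(R_{k})$. The finitely many residual small cases $n\le63$ and $n=143$ would be disposed of by direct search inside the bounded candidate set, exactly as the verified instances $PComS(5,2,-2)$ and $PComS(9,2,-2)$ of Theorems \ref{PComS_5} and \ref{PComS_9} are obtained.

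The hard part is precisely the passage from these partial, arithmetically restricted constructions to an existence proof valid for \emph{every} odd $n$: no single construction is known to cover all residue classes, and a uniform argument would resolve a long-standing open problem intimately linked to the Hadamard conjecture itself. The run-structure characterization and the bound $B(n,2,-2,a)$ guarantee that the necessary combinatorial profile is consistent and that the set of candidates is finite, but they do not by themselves produce a realizable pair $(A,B)$ for an arbitrary odd $n$; closing this gap between a consistent run profile and an actual solution is where the essential difficulty lies, and it is the reason the statement remains a conjecture rather than a theorem.
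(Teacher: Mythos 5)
This statement is a \emph{conjecture} in the paper: the author offers no proof, and none is known, so there is no argument of the paper's to compare yours against. Your reduction is nevertheless the right framing and matches what the paper itself does in the section on Hadamard matrices with two circulant cores: reading the off-diagonal entries of $AA^{t}+BB^{t}=(2n+2)I_{n}-2J_{n}$ as autocorrelations gives $\mathsf{P}_{A}(k)+\mathsf{P}_{B}(k)=-2$ for $k\neq0$, i.e.\ the pair $(A,B)$ is a $PComS(n,2,-2)$, exactly as the paper asserts (in $\G_{2n}(n-1)$). One point you gloss over in claiming the reduction is an equivalence: a Hadamard matrix of order $2n+2$ also requires orthogonality with the two bordering rows, which forces the row sums of $A$ and $B$. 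This does follow, but it needs the short argument that summing $\mathsf{P}_{A}(k)+\mathsf{P}_{B}(k)$ over all $k$ gives $s_{A}^{2}+s_{B}^{2}=2$, hence $s_{A},s_{B}=\pm1$, and negation (an allowed $PComS$ equivalence) normalizes both to $-1$, putting the first rows in $\G_{n}\left(\frac{n-1}{2}\right)$.

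The genuine gap is the one you yourself name: Theorem \ref{caract_PComS} only gives \emph{necessary} run-structure conditions, and Theorem \ref{theorem_approx} only bounds the number of candidates; neither produces a pair $(A,B)$ for an arbitrary odd $n$. The constructions you list (Legendre, Singer/m-sequences, twin primes, cyclotomy) cover precisely the arithmetic families already enumerated in the paper and leave infinitely many odd $n$ untouched, with no uniform argument in sight. So what you have written is a correct reformulation plus a research program, not a proof; its concluding admission that the passage from a consistent run profile to an actual solution is open is accurate, and it is the reason the paper states this as Conjecture rather than Theorem.
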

The rows first of $A$ and $B$ is in $\G_{n}\left(\frac{n-1}{2}\right)$. The matrices $A,B$ form a family $PComS(n,2,-2)$ in $\G_{2n}(n-1)$ with $l(A)+l(B)=n+1$, $\mathcal{N}_{A}(R_{1})+\mathcal{N}_{B}(R_{1})=\frac{n+1}{2}$ and from the lemma 2, with $A=P_{1}\diamond_{r}Q_{1}$ and $B=P_{2}\diamond_{s}Q_{2}$, $r+s=n+1$
\begin{eqnarray}
2\leq&\mathcal{N}_{P_{1}}(R_{1})+\mathcal{N}_{P_{2}}(R_{1})&\leq\frac{n-1}{2}\\
1\leq&\mathcal{N}_{Q_{1}}(R_{1})+\mathcal{N}_{Q_{2}}(R_{1})&\leq\frac{n-3}{2}.
\end{eqnarray}
Then by theorem \ref{theorem_approx} the number of $PComS(n,2,-2)$ in $\G_{2n}(n-1)$ is bounded
from above by
\begin{equation}
B(n,2,-2,n-1)=\sum_{h=2}^{\frac{n-1}{2}}\binom{\frac{n+1}{2}}{h,i_{2},...,i_{r}}\binom{\frac{n+1}{2}}{(\frac{n+1}{2}-h),j_{2},...,j_{s}}
\end{equation}
with $p(n-1)=1h+2i_{2}+\cdots+ri_{r}$ and $i_{2}+\cdots+i_{r}=\frac{n+1}{2}-h$ and with
$p(n+1)=1(\frac{n+1}{2}-h)+2j_{2}+\cdots+sj_{s}$ and $j_{2}+\cdots+j_{s}=h$.

\subsubsection{Hadamard matrices of Goethals-Seidel type}

An Hadamard matrix of order $4n$, $n$ odd, is Goethals-Seidel type if it has the following
arrangement, $GS$-array, 
\begin{equation*}
H=\left( 
\begin{array}{cccc}
A & BR & CR & DR \\ 
-BR & A & -D^{t}R & C^{t}R \\ 
-CR & D^{t}R & A & -B^{t}R \\ 
-DR & -C^{t}R & B^{t}R & A
\end{array}
\right)
\end{equation*}
where $R$ will be the back-circulant identity matrix of order $n$,
\begin{equation*}
R=\left( 
\begin{array}{ccccc}
0&0&\cdots&0&1 \\ 
0&0&\cdots&1&0 \\ 
0&1&\cdots&0&0 \\ 
1&0&\cdots&0&0 
\end{array}
\right)
\end{equation*}
such that 
\begin{equation}\label{array_GS}
AA^{t}+BB^{t}+CC^{t}+DD^{t}=4nI_{n}.
\end{equation}
An Hadamard matrix $H$ is \textit{skew} if $HH^{t}=4nI_{n}$ and $H+H^{t}=2I_{n}$. A known result 
on skew-Hadamard matrices can be found in [19]. If $A,B,C,D$ are square circulant
matrices of order $n$, if $A$ is skew, and if 
\begin{equation}
AA^{t}+BB^{t}+CC^{t}+DD^{t}=(4n-1)I_{n}
\end{equation}
then
\begin{equation*}
H=\left( 
\begin{array}{cccc}
A+I_{n} & BR & CR & DR \\ 
-BR & A+I_{n} & -D^{t}R & C^{t}R \\ 
-CR & D^{t}R & A+I_{n} & -B^{t}R \\ 
-DR & -C^{t}R & B^{t}R & A+I_{n}
\end{array}
\right)
\end{equation*}
is an skew Hadamard matrix.

On the other hand, two matrices $M,N$ of order $n$ are amicable if $MN^{t}=NM^{t}$. Let $A,B,C,D$ be
$\mathbb{Z}_{2}$-matrices of order $n$. We say that $A,B,C,D$ are \textit{Williamson type} matrices
if they are pairwise amicable and satisfiy equation (\ref{array_GS}). In [18],[20],[21] we can see 
more on Hadamard matrices of Goethals-Seidel type and Williamson type.

\begin{conjecture}
If $n$ is a multiple of $4$, then a Goethals-Seidel Hadamard matrix of order $n$ exists.
\end{conjecture}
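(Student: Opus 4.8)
The plan is to read this conjecture through the dictionary built above: constructing a Goethals--Seidel Hadamard matrix of order $4m$ is \emph{equivalent} to exhibiting a periodic complementary family $PComS(m,4,0)$. Indeed, if $A,B,C,D$ are circulant of order $m$ with first rows $a,b,c,d$, then the governing relation $AA^{t}+BB^{t}+CC^{t}+DD^{t}=4mI_{m}$ says exactly that the nontrivial periodic autocorrelations cancel at every shift, i.e. $\sum_{i=1}^{4}\mathsf{P}_{A_{i}}(k)=0$ for $k\neq0$, while the diagonal contributes $4\cdot\mathsf{P}_{A_{i}}(0)=4m$; conversely, any four binary sequences with this property inserted into the $GS$-array produce such a matrix. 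So the first step is to make this reduction precise and thereby replace the matrix problem by the sequence problem of producing a $PComS(m,4,0)=PCS(m,4)$ for every $m$ to which the array applies (classically $m$ odd).

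Granting the reduction, I would then bring Theorem~\ref{caract_PComS} to bear with $q=4$ and $c=0$. It forces $\sum_{i=1}^{4}l(A_{i})=2m$ and $\sum_{i=1}^{4}\mathcal{N}_{A_{i}}(R_{1})=m$, together with the recursion controlling $\mathcal{N}_{A_{i}}(R_{k})$ for $k\geq2$; these are the precise run-structure constraints any admissible quadruple must meet. Using the correspondence between circulant orbits and compositions established earlier, the task becomes that of choosing four compositions of $m$ whose parts equal to $1$ are distributed according to these totals. The counting bound $B(n,q,c,a)$ of Theorem~\ref{theorem_approx}, specialized to $q=4$ and $c=0$, then organizes and bounds the inequivalent candidate quadruples lying in each $\G_{4m}(a)$, and for small $m$ one can run this against an exhaustive search through the Hamming $S_{H}$-sets exactly as in the cases $n=4,\dots,9$ treated earlier.

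The hard part --- and the reason the statement is a conjecture rather than a theorem --- is that none of this machinery actually \emph{produces} a $PComS(m,4,0)$ for all $m$. The run-structure identities and the bound $B(n,q,c,a)$ deliver necessary conditions and counts, and the negation/reversal invariances cut the search space, but existence of four periodic complementary binary sequences of arbitrary length is precisely the Hadamard conjecture specialized to the Goethals--Seidel family, for which no uniform construction is known. Every successful family in the literature draws on external algebraic or number-theoretic input (Williamson-type matrices, quadratic residues, Turyn-type base sequences) rather than on a single combinatorial recipe, so the realistic outcome of the present approach is not a full proof but a verification for small orders together with sharp upper bounds on the number of inequivalent solutions in each $\G_{4m}(a)$.
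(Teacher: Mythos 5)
This statement is a conjecture in the paper, not a theorem: the paper offers no proof of it, only the surrounding reduction identifying a Goethals--Seidel Hadamard matrix of order $4n$ with a family $PComS(n,4,0)$ in $\G_{4n}\left(2n-\frac{a+b+c+d}{2}\right)$, the run-structure constraints from Theorem~\ref{caract_PComS}, and the upper bound from Theorem~\ref{theorem_approx}. Your proposal traces exactly this same path --- the reduction to four periodic complementary circulant sequences, the constraints $\sum_{i}l(A_{i})=2n$ and $\sum_{i}\mathcal{N}_{A_{i}}(R_{1})=n$, and the counting bound --- and correctly concludes that this machinery supplies necessary conditions and bounds but no existence proof, which is precisely why the paper leaves the statement as a conjecture.
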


It is a fact that if a $GS$-array exists, then there exist integer numbers $a,b,c,d$ such that
\begin{equation}
a^{2}+b^{2}+c^{2}+d^{2}=4n.
\end{equation}
If the matrices $A,B,C,D$ have row sums $a,b,c,d$, all positive numbers, respectively, then 
$A,B,C,D$ form a family $PComS(n,4,0)$ in $\G_{4n}(2n-\frac{a+b+c+d}{2})$ with $A\subset\G_{n}(\frac{n-a}{2})$, $B\subset\G_{n}(\frac{n-b}{2})$, $C\subset\G_{n}(\frac{n-c}{2})$, $D\subset\G_{n}(\frac{n-d}{2})$, with $l(A)+l(B)+l(C)+l(D)=2n$, $\mathcal{N}_{A}(R_{1})+\mathcal{N}_{B}(R_{1})+\mathcal{N}_{C}(R_{1})+\mathcal{N}_{D}(R_{1})=n$ and from the lemma 2, with $A=P_{1}\diamond_{r}Q_{1}$, $B=P_{2}\diamond_{s}Q_{2}$, $C=P_{3}\diamond_{t}Q_{3}$, $D=P_{4}\diamond_{w}Q_{4}$, $r+s+t+w=2n$,
\begin{eqnarray}
\frac{a+b+c+d}{2}\leq&\sum_{i=1}^{4}\mathcal{N}_{P_{i}}(R_{1})&\leq n-1\\
1\leq&\sum_{i=1}^{4}\mathcal{N}_{Q_{i}}(R_{1})&\leq n-\frac{a+b+c+d}{2}.
\end{eqnarray}
Then, from the theorem \ref{theorem_approx} the number of $PComS(n,4,0)$ in $\G_{4n}(2n-\frac{a+b+c+d}{2})$ is bounded from above by
\begin{equation}
B\left(n,4,0,2n-\frac{a+b+c+d}{2}\right)=\sum_{h=\frac{a+b+c+d}{2}}^{n-1}\binom{n}{h,i_{2},...,i_{r}}\binom{n}{(n-h),j_{2},...,j_{s}}
\end{equation}
with $p\left(2n-\frac{a+b+c+d}{2}\right)=1h+2i_{2}+\cdots+ri_{r}$ and $i_{2}+\cdots+i_{r}=n-h$ and with $p\left(2n+\frac{a+b+c+d}{2}\right)=1(n-h)+2j_{2}+\cdots+sj_{s}$ and $j_{2}+\cdots+j_{s}=h$.

\subsection{Partial Hadamard Matrices}

A partial Hadamard matrix denoted $PH(k\times n)$ is a $k\times n$ matrix all of whose entries are 
$+1$ or $-1$ which satisfies $PH\cdot PH^{t}=nI_{k}$. For more details can see [22],[23],[24].

On the other hand, from theorem 4 in [9] we can conclude that if a partial Hadamard matrix $PH$
exist, then there is either in a complete maximal $S$-sets even $[\E_{4n}]$ or in a complete 
maximal $S$-sets odd $[\Odd_{4n}]$. A circulant partial Hadamard matrix never is a family
$PcomS$, however the theorem \ref{theorem_approx} provides a upper bound for all circulant partial
Hadamard matrices $PH$ of order $k\times4n$, with $k\geq2$.

The following theorem shows the relationship between $PComS$ and partial Hadamard matrices

\begin{theorem}
If there exist $PComS(n,q,-c)$, $c\geq0$, then there exist a partial Hadamard matrix 
$PH(n\times(nq+c))$.
\end{theorem}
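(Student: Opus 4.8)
The plan is to realize the blocks of a $PComS(n,q,-c)$ as the circulant strips of a single $n\times(nq+c)$ sign matrix whose $n$ rows are forced to be mutually orthogonal. First I would attach to each circulant set $A_{iC}$ the $n\times n$ circulant matrix $M_i$ whose rows are the $n$ cyclic shifts of $A_i$, say $(M_i)_{j,l}=a_{i,(l-j)\bmod n}$. A one-line reindexing gives the standard identity $(M_iM_i^{t})_{j,j'}=\mathsf{P}_{A_i}(j-j')$, so each diagonal entry equals $\mathsf{P}_{A_i}(0)=n$ while the off-diagonal entries record the nontrivial autocorrelation values.

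Next I would concatenate these blocks horizontally, $M=[\,M_1\mid M_2\mid\cdots\mid M_q\,]$, obtaining an $n\times nq$ matrix over $\{+,-\}$. Since $MM^{t}=\sum_{i=1}^{q}M_iM_i^{t}$, the entry $(MM^{t})_{j,j'}$ equals $\sum_{i=1}^{q}\mathsf{P}_{A_i}(j-j')$: the diagonal is $nq$, and for $j\neq j'$ the defining relation of $PComS(n,q,-c)$ collapses the sum to $-c$. Writing $J_n$ for the all-ones matrix, this reads
\begin{equation*}
MM^{t}=(nq+c)I_n-cJ_n.
\end{equation*}

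Finally I would cancel the rank-one defect $-cJ_n$ by appending columns. Because $c\geq0$ is a nonnegative integer, I may adjoin the $n\times c$ all-$+$ block $N$, which satisfies $NN^{t}=cJ_n$. Setting $PH=[\,M\mid N\,]$ produces an $n\times(nq+c)$ matrix with entries in $\{+,-\}$ and
\begin{equation*}
PH\cdot PH^{t}=MM^{t}+NN^{t}=(nq+c)I_n,
\end{equation*}
which is exactly the condition defining a partial Hadamard matrix $PH(n\times(nq+c))$. The one delicate point --- and the reason the statement carries the sign $-c$ together with $c\geq0$ rather than an arbitrary constant --- is this last step: the correction block must consist of honest $\pm1$ columns, and a constant all-$+$ block contributes $+cJ_n$, so it can absorb the negative defect $-cJ_n$ only when $c\geq0$. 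I expect the verification of the circulant identity $(M_iM_i^{t})_{j,j'}=\mathsf{P}_{A_i}(j-j')$ to be the only place that needs care, but it is routine once the shift convention is fixed.
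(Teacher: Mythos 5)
Your proposal is correct and follows essentially the same route as the paper: the paper likewise forms the circulant matrices $A_{iC}$ from the sequences, concatenates them with $c$ copies of the all-ones column $\mathbf{e}_n^{t}$, and computes $PH\cdot PH^{t}=c\,\mathbf{e}_n^{t}\mathbf{e}_n+\sum_{i=1}^{q}A_{iC}A_{iC}^{t}=(nq+c)I_n$. Your write-up merely makes explicit the two steps the paper leaves implicit, namely the identity $(M_iM_i^{t})_{j,j'}=\mathsf{P}_{A_i}(j-j')$ and the cancellation $-cJ_n+cJ_n=0$.
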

\begin{proof}
Let $\{A_{iC}\}$ be a family $PComS(n,q,-c)$ in $\mathbb{Z}_{2C}^{n}$. Define the
$n\times(nq+c)$ matrix $PH=\left(\overset{c}{\overbrace{\textbf{e}_{n}^{t},...,\textbf{e}_{n}^{t}}},A_{1C},...,A_{qC}\right)$, where $\textbf{e}_{n}$ is the row vector with $n$ 1's. Then
it is easy to see that $PH\cdot PH^{t}=c\cdot\textbf{e}_{n}^{t}\textbf{e}_{n}+\sum_{i=1}^{q}A_{iC}^{2}=(nq+c)I_{n}$ and therefore $H$ is a partial Hadamard matrix.
\end{proof}

\begin{example}
From theorems \ref{PComS_5} to \ref{PComS_9} we obtain the following partial Hadamard matrices:
\begin{eqnarray*}
PH_{1}&=&\left(\mathbf{e}_{5}^{t},\mathbf{e}_{5}^{t},(++---)_{C},(+--+-)_{C}\right)_{5\times12}\\
PH_{2}&=&\left(\mathbf{e}_{6}^{t},\mathbf{e}_{6}^{t},(++----)_{C},(+-+---)_{C},(++-+--)_{C}\right)_{6\times20}\\
PH_{3}&=&\left(\mathbf{e}_{7}^{t},,(++-+---)_{C}\right)_{7\times8}\\
PH_{4}&=&\left(\mathbf{e}_{7}^{t},\mathbf{e}_{7}^{t},\mathbf{e}_{7}^{t},(+++----)_{C},(++--+--)_{C},(+-+-+--)_{C}\right)_{7\times24}\\
PH_{5}&=&((+-+-----)_{C},(+++--+--)_{C})_{8\times16}\\
PH_{6}&=&((+-+-----)_{C},(++-+----)_{C},(++--+---)_{C})_{8\times24}\\
PH_{7}&=&((++------)_{C},(+--+----)_{C},(+++-+---)_{C},\\
PH_{8}&=&\ (+--++-+-)_{C})_{8\times32}\\
PH_{9}&=&(\mathbf{e}_{9}^{t},\mathbf{e}_{9}^{t},(+++-+----)_{C},(+-+--++--)_{C})_{9\times20}\\
PH_{10}&=&(\mathbf{e}_{9}^{t},\mathbf{e}_{9}^{t},\mathbf{e}_{9}^{t},\mathbf{e}_{9}^{t},(++++-----)_{C},(++---++--)_{C},\\
&&\ (+--+-+-+-)_{C},(++-+--+--)_{C})_{9\times40}\\
PH_{11}&=&(\mathbf{e}_{9}^{t},(+--+-----)_{C},(+-+--++--)_{C},(++-+---+-)_{C},\\
&&\ (++----++-)_{C},(+++------)_{C},(++---+---)_{C},\\
&&\ (+----+-+-)_{C})_{9\times64}\\
PH_{12}&=&(\mathbf{e}_{9}^{t},\mathbf{e}_{9}^{t},\mathbf{e}_{9}^{t},(+++------)_{C}
,(++---+---)_{C},\\
&&\ (+----+-+-)_{C},(++-+-----)_{C},(+----++--)_{C},\\
&&\ (+--+---+-)_{C},(+-+--++--)_{C},(++-+---+-)_{C},\\
&&\ (++----++-)_{C})_{9\times84}
\end{eqnarray*}
\end{example}

\begin{theorem}
If there exist $PComS(n,q,c_{1})$ and $PComS(n,q,c_{2})$, with $c_{1}+c_{2}=-2$, then there
exist a partial Hadamard matrix $PH(2n\times2(nq+1))$.
\end{theorem}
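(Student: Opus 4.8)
The plan is to imitate the two-circulant-cores construction from the previous subsection, but with $q$ cores drawn from each of the two families and with the two border \emph{rows} deleted (only the border columns survive). Write $PComS(n,q,c_{1})=\{A_{1C},\dots,A_{qC}\}$ and $PComS(n,q,c_{2})=\{B_{1C},\dots,B_{qC}\}$, and, exactly as in the proof of the previous theorem, regard each $A_{iC}$ and $B_{iC}$ as the $n\times n$ circulant matrix whose first row is the corresponding $\Z_{2}$-sequence. I would then set
\[
PH=\begin{pmatrix}
\mathbf{e}_{n}^{t} & \mathbf{e}_{n}^{t} & A_{1C} & \cdots & A_{qC} & B_{1C} & \cdots & B_{qC}\\
\mathbf{e}_{n}^{t} & -\mathbf{e}_{n}^{t} & B_{1C}^{t} & \cdots & B_{qC}^{t} & -A_{1C}^{t} & \cdots & -A_{qC}^{t}
\end{pmatrix},
\]
where $\mathbf{e}_{n}$ is the all-ones row vector of length $n$. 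This matrix has $2n$ rows and $2+nq+nq=2(nq+1)$ columns, which is the required shape, so it remains only to check $PH\cdot PH^{t}=2(nq+1)I_{2n}$.

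The second step is to evaluate $PH\cdot PH^{t}$ as a $2\times2$ array of $n\times n$ blocks. Using $(A_{iC}A_{iC}^{t})_{jk}=\mathsf{P}_{A_{i}}(j-k)$ together with the defining property $\sum_{i}\mathsf{P}_{A_{i}}(k)=c_{1}$ for $k\neq0$, one gets $\sum_{i=1}^{q}A_{iC}A_{iC}^{t}=(nq-c_{1})I_{n}+c_{1}J_{n}$, and analogously $\sum_{i=1}^{q}B_{iC}B_{iC}^{t}=(nq-c_{2})I_{n}+c_{2}J_{n}$, where $J_{n}=\mathbf{e}_{n}^{t}\mathbf{e}_{n}$. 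Adding these and invoking $c_{1}+c_{2}=-2$ produces the key identity
\[
\sum_{i=1}^{q}A_{iC}A_{iC}^{t}+\sum_{i=1}^{q}B_{iC}B_{iC}^{t}=(2nq+2)I_{n}-2J_{n}.
\]
Each diagonal block of $PH\cdot PH^{t}$ equals this sum (in the lower block one first rewrites $B_{iC}^{t}B_{iC}=B_{iC}B_{iC}^{t}$ and $A_{iC}^{t}A_{iC}=A_{iC}A_{iC}^{t}$, valid since circulants are normal) plus the contribution $2J_{n}$ of the two border columns, so the $\pm2J_{n}$ cancel and both diagonal blocks equal $(2nq+2)I_{n}$.

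For the off-diagonal $(1,2)$ block I would note that the two border columns contribute $\mathbf{e}_{n}^{t}\mathbf{e}_{n}-\mathbf{e}_{n}^{t}\mathbf{e}_{n}=0$, while the cores contribute $\sum_{i=1}^{q}\bigl(A_{iC}B_{iC}-B_{iC}A_{iC}\bigr)$; since any two circulant matrices commute, every summand vanishes, and the $(2,1)$ block is its transpose. Hence $PH\cdot PH^{t}=(2nq+2)I_{2n}=2(nq+1)I_{2n}$, so $PH$ is a $PH(2n\times2(nq+1))$, as claimed. The only genuine design decision — and the step most likely to trip one up — is the precise placement of transposes and signs in the lower block (the $B^{t}$ and $-A^{t}$ pattern) together with the $\mathbf{e}_{n}^{t}$, $-\mathbf{e}_{n}^{t}$ border: these are exactly what force the off-diagonal blocks to cancel through commutativity, and what let the border $J_{n}$-terms absorb the $-2J_{n}$ arising from $c_{1}+c_{2}=-2$. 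Everything else is the routine circulant bookkeeping already carried out in the previous theorem.
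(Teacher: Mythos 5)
Your proof is correct, and it is \emph{not} the same as the paper's: the paper uses the identical bordered layout but keeps the lower-row blocks untransposed, i.e.\ its second row is $(\mathbf{e}_{n}^{t},-\mathbf{e}_{n}^{t},B_{1C},\dots,B_{qC},-A_{1C},\dots,-A_{qC})$, and it simply asserts that the off-diagonal blocks of $PH\cdot PH^{t}$ vanish. With that layout the off-diagonal block equals
\begin{equation*}
\sum_{i=1}^{q}\left(A_{iC}B_{iC}^{t}-B_{iC}A_{iC}^{t}\right)=M-M^{t},\qquad M=\sum_{i=1}^{q}A_{iC}B_{iC}^{t},
\end{equation*}
which is zero only if the summed cross-correlations of the pairs $(A_{i},B_{i})$ are invariant under $s\mapsto -s$, a condition the $PComS$ hypotheses do not supply. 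It actually fails for the paper's own example $PH_{13}$: for $A_{1}=++-----$ and $B_{1}=+++----$ the cross-correlation at shift $1$ is $5$ but at shift $-1$ it is $1$, and summing over the three pairs gives $3$ versus $-1$, so that matrix has entries $\pm4$ off the diagonal blocks of $PH\cdot PH^{t}$ and is not a partial Hadamard matrix. (The paper's template does work in the special case $B_{i}=A_{i}$, which is why $PH_{14}$ is unaffected.) Your modification --- placing $B_{iC}^{t}$ and $-A_{iC}^{t}$ in the lower row --- is exactly what closes this gap: the cross terms become $\sum_{i}(A_{iC}B_{iC}-B_{iC}A_{iC})=0$ because circulant matrices commute, while the diagonal blocks are unchanged because circulants are normal, $B_{iC}^{t}B_{iC}=B_{iC}B_{iC}^{t}$. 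The remainder of your computation, namely $\sum_{i}(A_{iC}A_{iC}^{t}+B_{iC}B_{iC}^{t})=(2nq+2)I_{n}-2J_{n}$ from the $PComS$ conditions and the absorption of $-2J_{n}$ by the two border columns, coincides with the paper's. In short, both arguments are elementary block computations over the commutative algebra of circulant matrices, but only yours is valid as stated; the paper's proof, and its example $PH_{13}$, are repaired precisely by your transposition.
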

\begin{proof}
Let $PComS(n,q,c_{1})=\{A_{1C},...,A_{qC}\}$ and $PComS(n,q,c_{2})=\{B_{1C},...,B_{qC}\}$ be families
$PComS$ with $c_{1}+c_{2}=-2$. Define the matrix $2n\times(2nq+2)$
\begin{equation}
PH=
\left(
\begin{array}{cccccccc}
\mathbf{e}_{n}^{t}&\mathbf{e}_{n}^{t}&A_{1C}&\cdots&A_{qC}&B_{1C}\cdots&B_{qC}\\
\mathbf{e}_{n}^{t}&-\mathbf{e}_{n}^{t}&B_{1C}&\cdots&B_{qC}&-A_{1C}\cdots&-A_{qC}
\end{array}
\right).
\end{equation}
Then
\begin{eqnarray*}
PH\cdot PH^{t} &=&
\left(
\begin{array}{cc}
2\mathbf{e}_{n}^{t}\mathbf{e}_{n}+\sum_{i=1}^{q}(A_{iC}^{2}+B_{iC}^{2})&\mathbf{0}\\
\mathbf{0}&2\mathbf{e}_{n}^{t}\mathbf{e}_{n}+\sum_{i=1}^{q}(A_{iC}^{2}+B_{iC}^{2})
\end{array}
\right)\\
&=&\left(
\begin{array}{cc}
(2nq+2)I_{n}&\mathbf{0}\\
\mathbf{0}&(2nq+2)I_{n}
\end{array}
\right)\\
&=&2(nq+1)I_{2n}
\end{eqnarray*}
\end{proof}

\begin{example}
From the theorems \ref{PComS_7} and \ref{PComS_9} we obtain the following partial Hadamard matrices
\begin{equation*}
PH_{13}=
\left(
\begin{array}{cccccccc}
\mathbf{e}_{7}^{t}&\mathbf{e}_{7}^{t}&A_{1C}&A_{2C}&A_{3C}&B_{1C}&B_{2C}&B_{3C}\\
\mathbf{e}_{7}^{t}&-\mathbf{e}_{7}^{t}&B_{1C}&B_{2C}&B_{3C}&-A_{1C}&-A_{2C}&-A_{3C}\\
\end{array}
\right)_{14\times44}
\end{equation*}
where $A_{1}=++-----$, $A_{2}=+-+----$, $A_{3}=+---+--$, $B_{1}=+++----$,
$B_{2}=++--+--$, $B_{3}=+-+-+--$,
and
\begin{equation*}
PH_{14}=
\left(
\begin{array}{cccc}
\mathbf{e}_{9}^{t}&\mathbf{e}_{9}^{t}&PComS(9,7,-1)&PComS(9,7,-1)\\
\mathbf{e}_{9}^{t}&-\mathbf{e}_{9}^{t}&PComS(9,7,-1)&-PComS(9,7,-1)
\end{array}
\right)_{18\times128}
\end{equation*}
where $PComS(9,7,-1)$ is like in the Appedix.
\end{example}

\subsection{Perfect Binary Sequences}

Let $\theta^{*}$ be the restriction of $\theta$ defined by $\theta^{*}(X_{C})=(\mathsf{P}_{X}(1),..,\mathsf{P}_{X}(n-1))$. It is know that $\sum_{k=1}^{n-1}\mathsf{P}_{X}(k)=(2a-n)^{2}-n$ if 
$X_{C}\subset\G_{n}(a)$, therefore $\theta^{*}$ sends the plane $\G_{n}(a)$ to plane
$X_{1}+X_{2}+\cdots+X_{n-1}=(2a-n)^{2}-n$. On the other hand, a binary sequence with 2-level
autocorrelation values is called \textit{perfect} if the nontrivial autocorrelation values $d$ are 
as small as possible in absolute value. If $X\in\G_{n}(a)$ is a perfect binary sequence, 
then $(2a-n)^{2}-n=(n-1)d$. Solving for $a$ we obtain
\begin{equation}
a=\frac{n\pm\sqrt{(n-1)d+n}}{2}.
\end{equation}
Thus for $d\in\{-2,-1,0,1,2\}$ we have
\begin{enumerate}
\item $a=\frac{n\pm\sqrt{n}}{2}$, con $d=0$ y $n\equiv0\mod4$.
\item $a=\frac{n\pm\sqrt{2n-1}}{2}$, con $d=1$ y $n\equiv1\mod4$.
\item $a=\frac{n\pm\sqrt{2-n}}{2}$, con $d=-2$ y $n\equiv2\mod4$.
\item $a=\frac{n\pm\sqrt{3n-2}}{2}$, con $d=2$ y $n\equiv2\mod4$.
\item $a=\frac{n\pm1}{2}$, con $d=-1$ y $n\equiv3\mod4$.
\end{enumerate}

The above agrees with the five different class of cyclic difference sets corresponding 
to the perfect binary sequences given by Jungnickel and Pott in [25]. The cases $d=0$ and $d=-1$
correspond to Hadamard matrices and were discussed in above sections. Therefore, we only have to 
analyse the cases $d=-2,1,2$.

\subsubsection{Case $n\equiv1\mod4$}

If there exist a perfect binary sequence with $d=1$, then this is a family $PComS(n,1,1)$ with
$n\equiv1\mod4$. As $PcomS(n,1,n-4)$ exists for all $n$, it is trivial that $PComS(5,1,1)=\{+----\}$
is a perfect sequence. On the other hand, $a=\frac{n-\sqrt{2n-1}}{2}$ has integer solution if
$n=2u^{2}+2u+1$ and then $a=u^{2}$. If $X$ is a perfect binary sequence, $l(X)=u^{2}+u$ and 
$\mathcal{N}_{X}(R_{1})=\frac{u^{2}+u}{2}$. From lemma 2, with $X=P\diamond_{l(X)/2}Q$,
\begin{eqnarray*}
u\leq&\mathcal{N}_{P}(R_{1})&\leq\frac{u^{2}+u}{2}-1\\
1\leq&\mathcal{N}_{Q}(R_{1})&\leq\frac{u^{2}-u}{2}
\end{eqnarray*}
The equation $\frac{u^{2}+u}{2}-1=u$ has positive solution $u=2$. Then $n=13$, $l(X)=6$, 
$\mathcal{N}_{X}(R_{1})=3$, $\mathcal{N}_{P}(R_{1})=2$ and $\mathcal{N}_{Q}(R_{1})=1$. 
The run string such that $X$ is perfect is $(1,1,2,3,1,5)=+-++---+-----$. The families 
$PComS(5,1,1)$ and $PComS(13,1,1)$ are the only perfect binary sequences known. 
\begin{conjecture}
There is no families $PComS(2u^{2}+2u+1,1,1)$ with $u\geq3$.
\end{conjecture}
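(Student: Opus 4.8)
The plan is to translate the existence of a $PComS(2u^{2}+2u+1,1,1)$ into a pure cyclic difference-set problem and then attack it with the algebraic machinery that governs such objects, after first squeezing the run-structure constraints for everything they are worth. First I would record the exact combinatorial data forced on any putative member $X\in PComS(n,1,1)$ with $n=2u^{2}+2u+1$. Since $X$ has $2$-level autocorrelation with $d=1$, Theorem \ref{caracterizacion} pins down $l(X)=\tfrac{n-1}{2}=u^{2}+u$, $\mathcal{N}_{X}(R_{1})=\tfrac{n-1}{4}=\tfrac{u(u+1)}{2}$, and the rigid recursion $\mathcal{N}_{X}(R_{k})=\sum_{i_{1}+\cdots+i_{r}=k,\,1<r\le k}(-1)^{r}\mathcal{N}_{X}(R_{i_{1}}\cdots R_{i_{r}})$ for every $2\le k\le n-2$. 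Writing $X\in\G_{n}(u^{2})$ in partitioned form $X=P\diamond_{l(X)/2}Q$, Lemma 2 gives $u\le\mathcal{N}_{P}(R_{1})\le\tfrac{u^{2}+u}{2}-1$. The salient feature of $u\ge3$ is precisely that this interval is nondegenerate (its endpoints coincide only at $u=2$), so the run bounds alone cannot finish the job; they only bound the count through $B(n,1,1,u^{2})$ of Theorem \ref{theorem_approx}.

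Second I would pass to the difference-set formulation, where the heavy tools live. The $+$-support $D\subseteq\Z_{n}$ of $X$ is a cyclic $(v,k,\lambda)=(2u^{2}+2u+1,\,u^{2},\,\tfrac{u(u-1)}{2})$ difference set of order $k-\lambda=\tfrac{u(u+1)}{2}$; equivalently the circulant $C$ with first row $X$ satisfies $CC^{t}=(n-1)I_{n}+J_{n}$, so every nontrivial character sum $\chi(D)$ has $|\chi(D)|^{2}=n-1$. Note that $k=u^{2}$ is a perfect square (indeed $(\text{order}-\lambda)^{2}=k$) and $v\equiv1\bmod4$, which places the problem outside the reach of Bruck--Ryser--Chowla but squarely inside the theory of cyclic difference sets. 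From here I would deploy, in order: (i) Hall's first multiplier theorem, applied to the qualifying prime divisors of $k-\lambda=\tfrac{u(u+1)}{2}$, to force $D$ to be a union of multiplier orbits; and (ii) the self-conjugacy and field-descent estimates of Turyn and of Leung--Ma--Schmidt applied to the ideal factorization of $\chi(D)$ in $\Z[\zeta_{n}]$, which for many residue classes of $u$ render $|\chi(D)|^{2}=n-1$ incompatible with the admissible absolute values of the relevant Gauss-type sums. For each fixed $u$ these conditions, combined with the bound $B(n,1,1,u^{2})$, are strong enough to decide existence, recovering that only $u=1$ ($n=5$) and $u=2$ ($n=13$, the Singer difference set of $PG(2,3)$) survive.

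The main obstacle is uniformity in $u$. The genuine difficulty is not any single value but excluding the entire infinite family at once: the multiplier and field-descent obstructions are governed by the arithmetic of $n=2u^{2}+2u+1$ and of $k-\lambda=\tfrac{u(u+1)}{2}$, which vary erratically with $u$ (for instance, whether a prime divisor of $k-\lambda$ exceeds $\lambda$, or whether small primes are self-conjugate modulo $n$), and no single congruence or character-sum inequality is known to rule out all $u\ge3$ simultaneously. This is the same phenomenon that keeps the circulant Hadamard conjecture stated earlier open, and I would expect a complete proof to require either a new multiplier theorem valid for this family or a descent bound whose strength grows with $u$. Absent such an input, the statement is best left, as here, as a conjecture supported by exhaustive verification for small $u$ together with the structural constraints of Theorems \ref{caracterizacion} and \ref{theorem_approx}.
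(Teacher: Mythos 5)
The statement you were assigned is, in the paper, precisely a conjecture: the paper offers no proof of it, only the supporting evidence that $PComS(5,1,1)$ and $PComS(13,1,1)$ are the only known perfect binary sequences with $d=1$, together with the structural constraints of Theorem \ref{caracterizacion}, Lemma 2, and the counting bound of Theorem \ref{theorem_approx}. Your proposal correctly recognizes this and, to its credit, does not pretend to close the gap. Your reduction is sound: a $PComS(2u^{2}+2u+1,1,1)$ in $\G_{n}(u^{2})$ is equivalent to a cyclic $(2u^{2}+2u+1,\,u^{2},\,u(u-1)/2)$ difference set; the run-structure data you extract ($l(X)=u^{2}+u$, $\mathcal{N}_{X}(R_{1})=u(u+1)/2$, the bounds on $\mathcal{N}_{P}(R_{1})$, and the degeneration of that interval exactly at $u=2$) matches the paper's own computations; and the machinery you list (Hall multipliers, Turyn self-conjugacy, Leung--Ma--Schmidt field descent) is indeed the standard attack on this family, which is the perfect-sequence problem of Jungnickel and Pott cited as [25]. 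The one thing to flag is that, viewed as a proof, your text has exactly the gap you yourself name: none of the proposed tools is known to work uniformly in $u$, so your steps (i) and (ii) can decide existence only for individual values of $u$ whose arithmetic cooperates, never for the whole family at once, and the bound $B(n,1,1,u^{2})$ is an upper bound on the number of candidates, not an obstruction to existence. Since the paper likewise leaves the statement as a conjecture, your treatment is the appropriate one; just present it explicitly as a correct reformulation plus a research plan, not as a proof.
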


From theorem \ref{theorem_approx}, the number of $PComS(2u^{2}+2u+1,1,1)$ in 
$\G_{2u^{2}+2u+1}(u^{2})$ is bounded from above by
\begin{equation}
\sum_{h=u}^{\frac{u^{2}+u}{2} -1}\binom{\frac{u^{2}+u}{2}}{h,i_{2},...,i_{r}}\binom{\frac{u^{2}+u}{2}}{(\frac{u^{2}+u}{2}-h),j_{2},...,j_{s}}
\end{equation}
with $p(u^{2})=1h+2i_{2}+\cdots+ri_{r}$ and $i_{2}+\cdots+i_{r}=\frac{u^{2}+u}{2}-h$ and with
$p((u+1)^{2})=1(\frac{u^{2}+u}{2}-h)+2j_{2}+\cdots+sj_{s}$ and $j_{2}+\cdots+j_{s}=h$.

\subsubsection{Case $n\equiv2\mod4$}
For $d=-2$, $a=\frac{n-\sqrt{2-n}}{2}$ has integer solution for $n=2$. Equally $PComS(n,1,n-4)$
is a perfect binary sequence with $d=-2$ only if $n=2$. Thus $+-$ is the desired sequence. In fact 
the only.

If $d=2$, $PComS(n,1,n-4)$ is a perfect binay sequence only if $n=6$. This is the only known case.
\begin{conjecture}
There is no $PComS(n,1,2)$ with $n>6$. 
\end{conjecture}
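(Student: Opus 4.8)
The plan is to convert the statement about periodic autocorrelation into a nonexistence statement about cyclic difference sets, sharpen the list of admissible periods by a parity obstruction, and then eliminate the surviving periods family by family. First I would collect what the earlier machinery already forces on a hypothetical $PComS(n,1,2)$, i.e. a single class $X_C$ with $\mathsf{P}_X(k)=2$ for every $k\neq0$. By Theorem~\ref{caracterizacion} with $d=2$ we get $l(X)=\frac{n-2}{2}$ and $\mathcal{N}_X(R_1)=\frac{n-2}{4}$, so $n\equiv2\pmod4$; and from the perfect-sequence computation, if $X\in\G_n(a)$ then $(2a-n)^2=3n-2$. I would then read the $+$-positions of $X$ as a set $D\subseteq\Z_n$ with $|D|=a$: the condition $\mathsf{P}_X(k)=2$ for all $k\neq0$ is precisely the statement that $D$ is a cyclic $(n,a,\lambda)$-difference set with $2=n-4(a-\lambda)$, hence of order $a-\lambda=\frac{n-2}{4}$.

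Second, I would use that the period is even. The circulant incidence matrix $M$ of the development of $D$ satisfies $\det(M)^2=a^2(a-\lambda)^{n-1}$; since $\det(M)\in\Z$ and $n-1$ is odd, $a-\lambda$ must be a perfect square (the classical even-order obstruction of Sch\"utzenberger for symmetric designs). Writing $a-\lambda=t^2$ gives $n=4t^2+2$, and feeding this into $(2a-n)^2=3n-2=4(3t^2+1)$ reduces everything to the Pell equation $b^2-3t^2=1$, where $2a-n=\pm2b$. Its solutions $(b,t)=(2,1),(7,4),(26,15),(97,56),\dots$ restrict $n$ to the sparse set $\{6,66,902,12546,\dots\}$. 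The value $t=1$ recovers the known $n=6$, which is the weight-one family $\G_6(1)$ of Theorem~\ref{no_trivial_tamanio_1}, and for every larger $n$ the Lemma asserting $\mathcal{N}_X(R_2)\neq0$ shows the hypothetical sequence would have genuinely nontrivial run structure.

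The decisive and hardest step is to rule out a cyclic $(4t^2+2,a,\lambda)$-difference set of order $t^2$ for every Pell index $t\geq4$. Since $n=2(2t^2+1)$ with $2t^2+1$ odd, every odd prime factor $p$ of $t$ is automatically coprime to $n$ and divides the order $t^2$, so when such a $p$ satisfies $p>\lambda$ the first multiplier theorem forces $p$ to fix a translate of $D$, and I would extract a contradiction from the induced orbit structure on $\Z_n$. I expect exactly this elimination to be the obstacle that keeps the statement a conjecture: here $\lambda=t^2+1-b$ with $b=\sqrt{3t^2+1}$ grows like $t^2$, so the hypothesis $p>\lambda$ fails for all prime factors $p\le t$ of $t$, and already the first case $n=66$ has $t=4$, order $16=2^4$ and $2\mid n$, which disables the odd-prime multiplier route entirely. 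A complete proof would therefore have to push the elimination through sharper nonexistence tests -- Mann's congruence/self-conjugacy test, the Turyn exponent bound, or field descent on the character-sum ideal attached to $D$ -- applied uniformly along the Pell family, and manufacturing a single argument valid for all $t$ is precisely where the difficulty lies.
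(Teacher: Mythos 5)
You should first note that the paper itself offers no proof of this statement: it appears as a conjecture and is left open, so there is no argument of the author's to measure yours against. Judged on its own terms, your proposal is not a proof either, and you say so yourself; what you actually give is a correct and genuinely sharper reduction, followed by an honest admission that the decisive step is missing. The reduction is sound: a $PComS(n,1,2)$ is a single class with $2$-level autocorrelation $d=2$, Theorem~\ref{caracterizacion} forces $n\equiv2\pmod4$, the support of $X$ is a cyclic $(n,a,\lambda)$-difference set of order $a-\lambda=\frac{n-2}{4}$, and since $n$ is even the Sch\"utzenberger/Bruck--Ryser--Chowla obstruction forces that order to be a perfect square $t^{2}$, whence $n=4t^{2}+2$ and the constraint $(2a-n)^{2}=3n-2$ becomes the Pell equation $b^{2}-3t^{2}=1$. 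This thins the admissible periods to $n\in\{6,66,902,12546,\dots\}$, which is strictly stronger than what the paper records: the paper only parametrizes the integrality of $a=\frac{n-\sqrt{3n-2}}{2}$ by two quadratic families in $u$ and then bounds the number of candidate run structures via Theorem~\ref{theorem_approx}; it imposes no square-order condition. Your identification of the known case $n=6$ with the Pell solution $t=1$ is also correct.

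The genuine gap is the one you flag yourself: nonexistence of a cyclic $\bigl(4t^{2}+2,\,2t^{2}+1-b,\,t^{2}+1-b\bigr)$-difference set for every Pell pair $(b,t)$ with $t\geq4$. As you observe, the first multiplier theorem is structurally useless here --- the order $t^{2}$ need not have any odd prime factor at all (already at $t=4$ the order is $2^{4}$ and $2\mid n$), and even when it does, its prime factors are at most $t$ while $\lambda$ grows like $t^{2}$, so the hypothesis $p>\lambda$ never holds. Nothing in the paper can close this either: the run-structure characterization and the counting bound of Theorem~\ref{theorem_approx} describe or bound candidate sequences, but they never exclude one. So your proposal should be read as a correct strengthening of the known constraints --- it reduces the conjecture to a nonexistence question about a sparse, Pell-indexed family of cyclic difference sets --- rather than as a proof; completing it would require tools absent from the paper (self-conjugacy/Mann tests, Turyn's exponent bound, field descent) applied uniformly in $t$, and that is exactly where the problem remains open.
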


Now, we will give bounds on the number of $PComS(n,1,2)$ in $\G_{n}(a)$ with $a=\frac{n-\sqrt{3n-2}}{2}$. First, note that $a$ has two integer solutions
\begin{enumerate}
\item $n=12u^{2}-16u+6$ y $a=6u^{2}-11u+5$,
\item $n=12u^{2}-4u+2$ y $a=6u^{2}-5u+2$,
\end{enumerate}
with $u\geq0$. If $X$ is a perfect binary sequence, then by lemma 2,
respectively
\begin{eqnarray*}
&l(X)&=6u^{2}+8u+2,\\
&\mathcal{N}_{X}(R_{1})&=3u^{2}-4u+1,\\
3u-3\leq&\mathcal{N}_{P}(R_{1})&\leq3u^{2}-4u,\\
1\leq&\mathcal{N}_{Q}(R_{1})&\leq3u^{2}-7u+4,
\end{eqnarray*}
and
\begin{eqnarray*}
&l(X)&=6u^{2}-2u,\\
&\mathcal{N}_{X}(R_{1})&=3u^{2}-2u,\\
3u-2\leq&\mathcal{N}_{P}(R_{1})&\leq3u^{2}-u+1,\\
1\leq&\mathcal{N}_{Q}(R_{1})&\leq3u^{2}-4u+2.
\end{eqnarray*}

In order to improve the bound from the theorem \ref{theorem_approx} we will use
the results below whose proofs are analogous to the proof the theorem 5 in [10]

\begin{theorem}
A family $PComS(12u^{2}-16u+6,1,2)$ can exist in $\G_{6u^{2}-8u+3}(a)\times\G_{6u^{2}-8u+3}(b)$ with 
$\frac{3u^{2}-7u+4}{2}\leq a\leq\frac{9u^{2}-15u+2}{2}$ and $b=6u^{2}-11u+5-a$.
\end{theorem}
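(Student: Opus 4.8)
The plan is to imitate the proof of Theorem~5 of [10] invoked above for circulant Hadamard matrices; the only new ingredient is that the constant autocorrelation value is now $d=2$ instead of $0$, and the whole argument is extracted from a single autocorrelation value, the one at the half-period shift $k=n/2$. Write $n=12u^{2}-16u+6$ and $N=n/2=6u^{2}-8u+3$. A perfect sequence with $d=2$ is exactly a $PComS(n,1,2)$, and by the computation above its Hamming weight is $\omega(X)=\frac{n-\sqrt{3n-2}}{2}$; since $3n-2=(6u-4)^{2}$ this weight equals $6u^{2}-11u+5$, a quantity that will play the role of $a+b$ once $X$ is cut into two halves. Since every nontrivial autocorrelation value equals $d=2$ and $N$ is a nontrivial shift, $\mathsf{P}_{X}(N)=2$.

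First I would fold the defining cyclic sum at the half-period. Writing $X=(A,B)$ with $A=(x_{0},\dots,x_{N-1})\in\G_{N}(a)$ and $B=(x_{N},\dots,x_{n-1})\in\G_{N}(b)$ and reading all indices modulo $n$, the shift $i\mapsto i+N$ carries the first block onto the second and the second block back onto the first, so each cross product $x_{i}x_{i+N}$ is counted twice:
\[
\mathsf{P}_{X}(N)=\sum_{i=0}^{n-1}x_{i}x_{i+N}=2\sum_{i=0}^{N-1}x_{i}x_{i+N}=2\langle A,B\rangle ,
\]
where $\langle A,B\rangle$ denotes the ordinary dot product of the two halves. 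Hence $\langle A,B\rangle=1$.

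Next I would turn this inner product into weight data. Let $\alpha,\beta,\gamma,\delta$ count the indices $i\in\{0,\dots,N-1\}$ at which $(x_{i},x_{i+N})$ equals $(+,+),(+,-),(-,+),(-,-)$, respectively. Then $\alpha+\beta+\gamma+\delta=N$, $\alpha+\beta=a$, $\alpha+\gamma=b$, and $\langle A,B\rangle=(\alpha+\delta)-(\beta+\gamma)=1$. Subtracting the inner-product relation from the total count gives $\beta+\gamma=\frac{N-1}{2}=3u^{2}-4u+1$, an integer because $N$ is odd; and since $\beta+\gamma=a+b-2\alpha$, this pins down
\[
\alpha=\frac{(a+b)-(3u^{2}-4u+1)}{2}=\frac{3u^{2}-7u+4}{2},
\]
a non-negative integer because $3u^{2}-7u+4=(3u-4)(u-1)$ is even for every integral $u$. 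The two bounds are then the non-negativity of the off-diagonal counts: $\beta=a-\alpha\ge 0$ yields the lower bound $a\ge\frac{3u^{2}-7u+4}{2}$, while $\gamma=b-\alpha\ge 0$, together with $b=6u^{2}-11u+5-a$, yields the complementary upper bound on $a$ and the stated formula for $b$.

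The delicate points are the folding identity $\mathsf{P}_{X}(N)=2\langle A,B\rangle$ (one must check that the half-period shift genuinely interchanges the two blocks, which is what produces the factor $2$) and the integrality of $\alpha$ and of $\beta+\gamma$; both are guaranteed here by $n\equiv 2\pmod 4$, equivalently $N$ odd. I expect the main obstacle to be the exact upper endpoint: plain non-negativity of $\gamma$ gives $a\le\frac{9u^{2}-15u+6}{2}$, symmetric to the lower bound, so reaching the sharper value stated in the theorem requires one additional run-structure input forcing $\gamma\ge 2$ (a constraint of the type $\mathcal{N}_{X}(R_{2})\neq 0$ from Lemma~1), precisely as the earlier one- and two-circulant-core bounds were sharpened.
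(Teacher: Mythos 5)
Your proposal takes essentially the same route the paper intends: the paper's entire ``proof'' is the remark that the argument is analogous to Theorem 5 of [10], and that theorem is exactly the statement quoted in the subsection on circulant Hadamard matrices, namely the symmetric range $\frac{m^{2}-m}{2}\le a\le\frac{3m^{2}-m}{2}$, which is precisely what your half-period folding produces when $d=0$. Your folding identity $\mathsf{P}_{X}(N)=2\langle A,B\rangle$ (valid since $n=2N$), the pair counting, the integrality checks, and the lower bound $a\ge\frac{3u^{2}-7u+4}{2}$ are all correct.

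The discrepancy you flag at the upper endpoint is real, but it is a defect of the paper's statement, not a gap in your argument, and you should not hunt for an extra run-structure constraint forcing $\gamma\ge2$: no such constraint can hold. Test $u=0$: then $n=6$, $a+b=5$, and the negation of the paper's own $PComS(6,1,2)=\{(+-----)_{C}\}$ is the perfect sequence $(-+++++)_{C}$ in $\G_{6}(5)$ with $d=2$; cutting any rotation of it at the half-period gives weights $(a,b)\in\{(2,3),(3,2)\}$, so $a=3$ is attained. This matches your symmetric bound $\frac{9u^{2}-15u+6}{2}=3$ exactly and violates the stated bound $\frac{9u^{2}-15u+2}{2}=1$, which in fact makes the stated range $[2,1]$ empty; the case $u=1$ ($n=2$) fails the same way. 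The ``$+2$'' in the theorem is almost certainly an arithmetic slip for ``$+6$'' --- the surrounding text has similar slips, e.g.\ $l(X)=6u^{2}+8u+2$ where $6u^{2}-8u+2$ is meant --- so your derivation, ending with the symmetric upper bound, should be regarded as the correct and complete form of the theorem rather than as an incomplete proof of the printed one.
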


\begin{theorem}
A family $PComS(12u^{2}-4u+2,1,2)$ can exist in $\G_{6u^{2}-2u+1}(a)\times\G_{6u^{2}-2u+1}(b)$ with 
$\lfloor\frac{3u^{2}-4u+3}{2}\rfloor\leq a\leq\lfloor\frac{3u^{2}+2u}{2}\rfloor$ and 
$b=6u^{2}-5u+2-a$.
\end{theorem}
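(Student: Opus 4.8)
The plan is to force a single autocorrelation value, the one at the half\mbox{-}period shift, and read off a constraint on the weights of the two halves; this is precisely the mechanism behind the circulant Hadamard bound $\frac{m^{2}-m}{2}\le a\le\frac{3m^{2}-m}{2}$ recalled before the statement, which is the case $d=0$ of what follows. Write $n=12u^{2}-4u+2$ and $t=n/2=6u^{2}-2u+1$, and suppose a perfect sequence with $d=2$ exists. I would split it as $X=(A,B)$ with $A,B\in\Z_{2}^{t}$, say $A\in\G_{t}(a)$ and $B\in\G_{t}(b)$, and write $w:=a+b$ for the total weight, so that $w=6u^{2}-5u+2$ and $b=w-a$, which is exactly the relation in the statement. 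Since $n\equiv2\bmod 4$, the shift $t$ is an admissible nontrivial shift and $t$ is odd.

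First I would compute $\mathsf{P}_{X}(t)$. By $n$-periodicity $x_{i+t}=x_{i-t}$, so the product $x_{i}x_{i+t}$ is counted twice as $i$ ranges over $0,\dots,n-1$, giving $\mathsf{P}_{X}(t)=2\sum_{i=0}^{t-1}x_{i}x_{i+t}=2\,(A\cdot B)$, where $A\cdot B$ is the ordinary inner product of the two halves. A perfect sequence with $d=2$ has $\mathsf{P}_{X}(t)=2$, and hence $A\cdot B=1$.

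Next I would convert $A\cdot B=1$ into a bound on $|a-b|$ by a $2\times2$ count. Let $n_{++},n_{+-},n_{-+},n_{--}$ be the numbers of coordinates where $(A_{i},B_{i})$ equals $(+,+),(+,-),(-,+),(-,-)$. Then agreements minus disagreements is $A\cdot B=(n_{++}+n_{--})-(n_{+-}+n_{-+})=1$, while $n_{++}+n_{+-}+n_{-+}+n_{--}=t$, so the number of disagreements is $n_{+-}+n_{-+}=\frac{t-1}{2}=3u^{2}-u$. Because $a=n_{++}+n_{+-}$ and $b=n_{++}+n_{-+}$, we have $a-b=n_{+-}-n_{-+}$, and nonnegativity of $n_{+-},n_{-+}$ forces $|a-b|\le\frac{t-1}{2}$. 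Substituting $a+b=w$ and solving the two linear inequalities pins $a$ into an interval centred at $w/2$ of radius $\frac{t-1}{4}$; passing to integers (recall $a$ is an integer since $A\in\G_{t}(a)$) yields the lower endpoint $\lfloor\frac{3u^{2}-4u+3}{2}\rfloor$ of the statement together with a first, symmetric two-sided bound. The companion theorem for $n=12u^{2}-16u+6$ is handled identically, only the numerology changing.

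The subtle point, and what I expect to be the main obstacle, is the sharp upper endpoint. The half-period count alone already gives the lower endpoint and, in the Hadamard case $d=0$, the exact two-sided bound; improving the upper endpoint to the value claimed is where the finer analysis of Theorem 5 of [10] enters. I would combine $A\cdot B=1$ with Lemma 2 — which, through $\mathcal{N}_{X}(R_{2})\neq0$, rules out the extremal run configurations (a half built entirely from isolated symbols) that would be needed to realise $|a-b|=\frac{t-1}{2}$ — and with the weight identity $(2w-n)^{2}-n=(n-1)d$. The extremal run patterns attaining the crude bound are incompatible with the run constraints that a genuine $2$-level sequence must obey, and excluding them is exactly what trims the naive endpoint down to the one in the statement and breaks the symmetry of the preliminary interval.
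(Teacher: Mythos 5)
Your central mechanism --- computing $\mathsf{P}_{X}(n/2)=2(A\cdot B)$ for the half-length $t=n/2=6u^{2}-2u+1$, converting $A\cdot B=1$ into the disagreement count $\frac{t-1}{2}=3u^{2}-u$, and intersecting $|a-b|\le\frac{t-1}{2}$ with $a+b=6u^{2}-5u+2$ --- is exactly the method the paper relies on: the paper gives no self-contained proof at all, only the remark that the argument is analogous to that of Theorem 5 in [10], and that theorem (the circulant Hadamard bound $\frac{m^{2}-m}{2}\le a\le\frac{3m^{2}-m}{2}$ quoted earlier in the paper) is precisely this half-period splitting argument. Your derivation of the lower endpoint $\lfloor\frac{3u^{2}-4u+3}{2}\rfloor$ is correct and complete, and your identification of the $d=0$ case as the same mechanism is accurate.

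The genuine gap is the upper endpoint, and it is not a deferred technicality. Your symmetric bound gives $a\le\frac{9u^{2}-6u+2}{2}$, while the statement claims $a\le\lfloor\frac{3u^{2}+2u}{2}\rfloor$; the difference is $3u^{2}-4u+1$, which is comparable to the length of the entire interval, not a boundary correction. Your proposed repair --- invoking Lemma 2 via $\mathcal{N}_{X}(R_{2})\neq0$ to exclude extremal run configurations --- cannot close a gap of that size: such exclusions eliminate the few configurations with $n_{-+}=0$ or $n_{-+}=1$ and shift an endpoint by $O(1)$, never by $\Theta(u^{2})$. Moreover, no argument genuinely analogous to Theorem 5 of [10] can yield an interval asymmetric about $(a+b)/2$, because the underlying count is invariant under swapping $A$ and $B$ (a cyclic shift by $t$); consistently, the paper's companion theorem for $n=12u^{2}-16u+6$ has endpoints summing to the total weight up to an additive constant, which is what the symmetric argument produces. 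So what you have actually proved is the symmetric interval $\lceil\frac{3u^{2}-4u+2}{2}\rceil\le a\le\frac{9u^{2}-6u+2}{2}$; the claimed upper endpoint is not derivable from the paper's own cited method, and your final paragraph promises a trimming argument rather than supplying one --- as written, the attempt does not prove the stated theorem, it proves the (correct, weaker, symmetric) version of it.
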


Just as we did with circulant Hadamard matrices, we obtain upper bounds for families 
$PComS(n,1,2)$ with $n=12u^{2}-16u+6$ and $n=12u^{2}-4u+2$, respectively

\begin{theorem}
The number of families $PComS(12u^{2}-16u+6,1,2)$ in $\G_{6u^{2}-8u+3}(a)\times\G_{6u^{2}-8u+3}(b)$ with $\frac{3u^{2}-7u+4}{2}\leq a\leq\frac{9u^{2}-15u+2}{2}$ and $b=6u^{2}-11u+5-a$ is bounded 
from above by
\begin{eqnarray}
\sum_{h=3u-3}^{3u^{2}-4u}\sum_{a=\frac{3u^{2}-7u+4}{2}}^{\frac{9u^{2}-15u+2}{2}}&&\sum_{d=0}^{h}\sum_{a_{1}=0}^{a}\binom{a_{1}}{d,i_{2},...,i_{r}}\binom{a-a_{1}}{(h-d),j_{2},...,j_{s}}\\
&\times&\sum_{e=0}^{3u^{2}-4u+1-h}\sum_{b_{1}=0}^{6u^{2}-11u+5-a}\binom{b_{1}}{e,k_{2},...,k_{t}}\binom{6u^{2}-11u+5-a-b_{1}}{(3u^{2}-4u+1-h-e),l_{2},...,l_{w}}\nonumber
\end{eqnarray}
where $i_{2}+\cdots+i_{r}=a_{1}-d$, $i_{g}\geq2$, $j_{2}+\cdots+j_{s}=a-a_{1}-h+d$, $j_{g}\geq2$ and
$k_{2}+\cdots+k_{t}=b_{1}-e$, $k_{g}\geq2$, $l_{2}+\cdots+l_{w}=3u^{2}-7u+4-a-b_{1}+h+e$, 
$l_{g}\geq2$.
\end{theorem}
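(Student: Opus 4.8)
The plan is to reproduce, step for step, the counting argument already carried out for circulant Hadamard matrices in the analogous double-sum theorem of this subsection, substituting the parameters of the present case. First I would invoke the localization theorem stated immediately above, which confines any such family to a decomposition $X=(A,B)$ with $A\in\G_{6u^{2}-8u+3}(a)$, $B\in\G_{6u^{2}-8u+3}(b)$, $b=6u^{2}-11u+5-a$, and $a$ ranging in $[\frac{3u^{2}-7u+4}{2},\frac{9u^{2}-15u+2}{2}]$; this turns the count into a sum over $a$. Next, applying Theorem \ref{caracterizacion} with $d=2$ and $n=12u^{2}-16u+6$ gives $l(X)=6u^{2}-8u+2$ and $\mathcal{N}_{X}(R_{1})=3u^{2}-4u+1$, so that in partitioned form $X=P\diamond_{l(X)/2}Q$ with $P$ and $Q$ each of length $l(X)/2=3u^{2}-4u+1$. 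Lemma 2 then pins $h:=\mathcal{N}_{P}(R_{1})$ to the range $3u-3\le h\le 3u^{2}-4u$, which is exactly the outermost summation index of the asserted bound.

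The combinatorial core is then the same bookkeeping used in the circulant case. For fixed $h$ and $a$ I would record how the length-$1$ plus-runs distribute between the two halves, writing $d$ for the number falling in $A$ (so $0\le d\le h$, leaving $h-d$ in $B$), and how the total plus-runs distribute, writing $a_{1}$ for the number in $A$ (so $0\le a_{1}\le a$). The plus-runs of $A$ realize a composition with exactly $d$ ones and the remaining parts $\ge 2$, counted by $\binom{a_{1}}{d,i_{2},\dots,i_{r}}$ with $i_{2}+\cdots+i_{r}=a_{1}-d$, and symmetrically those of $B$ by $\binom{a-a_{1}}{(h-d),j_{2},\dots,j_{s}}$ with $j_{2}+\cdots+j_{s}=a-a_{1}-h+d$. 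Repeating the identical split for the minus-runs, whose length-$1$ total is $\mathcal{N}_{X}(R_{1})-h=3u^{2}-4u+1-h$ with $e$ of them in $A$ and total minus-runs in $A$ equal to $b_{1}\le 6u^{2}-11u+5-a$, produces the two further multinomials $\binom{b_{1}}{e,k_{2},\dots,k_{t}}$ and $\binom{6u^{2}-11u+5-a-b_{1}}{(3u^{2}-4u+1-h-e),l_{2},\dots,l_{w}}$ with the stated part constraints. Multiplying the four multinomials and summing over $h,a,d,a_{1},e,b_{1}$ delivers the displayed expression; equivalently, it is the refinement of the general bound of Theorem \ref{theorem_approx} obtained by the $A/B$ splitting, precisely as in the circulant Hadamard corollary.

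I would emphasize that this is an upper bound, not an equality: the multinomials enumerate all run-length configurations compatible with the necessary conditions of Theorem \ref{caracterizacion} and Lemma 2, whereas not every such configuration assembles into a genuine perfect sequence with $d=2$, so the sum overcounts. The main obstacle I anticipate is purely the index bookkeeping, namely verifying that the run-length constraints translate into exactly these summation limits and multinomial tops, and in particular handling the runs that straddle the junction between $A$ and $B$ together with the circular wrap-around, so that no admissible configuration is dropped (which is what makes the inequality point the correct way). Once this boundary accounting is matched against the circulant Hadamard computation, the remaining arithmetic substitutions, replacing the run-count $m^{2}$ by $3u^{2}-4u+1$, the weight by $6u^{2}-11u+5$, and the $a$-range by that of the localization theorem, are routine.
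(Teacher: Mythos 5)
Your proposal is correct and follows essentially the same route as the paper: the paper gives no separate proof for this theorem, instead appealing to the localization result stated just above it and instructing the reader to repeat the counting argument of the circulant Hadamard matrix theorem ("Just as we did with circulant Hadamard matrices\dots"), which is exactly what you do — fix $h=\mathcal{N}_{P}(R_{1})$ and $a$, split the plus-runs and minus-runs between $A$ and $B$ via $d,a_{1},e,b_{1}$, count compositions by multinomials, and sum. Your explicit computation of $l(X)=6u^{2}-8u+2$ (correcting the paper's sign typo $6u^{2}+8u+2$) and your remark about runs straddling the $A$/$B$ junction, which the paper silently glosses over, are faithful to, and slightly more careful than, the intended argument.
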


\begin{theorem}
The number of families $PComS(12u^{2}-4u+2,1,2)$ in $\G_{6u^{2}-2u+1}(a)\times\G_{6u^{2}-2u+1}(b)$ with $\lfloor\frac{3u^{2}-4u+3}{2}\rfloor\leq a\leq\lfloor\frac{3u^{2}+2u}{2}\rfloor$ and 
$b=6u^{2}-5u+2-a$ is bounded from above by
\begin{eqnarray}
\sum_{h=3u-2}^{3u^{2}-u+1}\sum_{a=\lfloor\frac{3u^{2}-4u+3}{2}\rfloor}^{\lfloor\frac{3u^{2}+2u}{2}\rfloor}&&\sum_{d=0}^{h}\sum_{a_{1}=0}^{a}\binom{a_{1}}{d,i_{2},...,i_{r}}\binom{a-a_{1}}{(h-d),j_{2},...,j_{s}}\\
&\times&\sum_{e=0}^{3u^{2}-2u-h}\sum_{b_{1}=0}^{6u^{2}-5u+2-a}\binom{b_{1}}{e,k_{2},...,k_{t}}\binom{6u^{2}-5u+2-a-b_{1}}{(3u^{2}-2u-h-e),l_{2},...,l_{w}}\nonumber
\end{eqnarray}
where $i_{2}+\cdots+i_{r}=a_{1}-d$, $i_{g}\geq2$, $j_{2}+\cdots+j_{s}=a-a_{1}-h+d$, $j_{g}\geq2$ and
$k_{2}+\cdots+k_{t}=b_{1}-e$, $k_{g}\geq2$, $l_{2}+\cdots+l_{w}=3u^{2}-3u+2-a-b_{1}+h+e$, 
$l_{g}\geq2$.
\end{theorem}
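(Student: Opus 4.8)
The plan is to run, mutatis mutandis, the same counting argument used to bound the circulant Hadamard matrices in $\G_{2m^{2}}(a)\times\G_{2m^{2}}(b)$, now feeding in the run data of the second perfect-sequence case $n=12u^{2}-4u+2$. I would begin by writing $X=(A,B)$ with $A\in\G_{6u^{2}-2u+1}(a)$ and $B\in\G_{6u^{2}-2u+1}(b)$ and $b=6u^{2}-5u+2-a$, which is legitimate precisely because the preceding localisation theorem confines any such $PComS(12u^{2}-4u+2,1,2)$ to a product $\G_{6u^{2}-2u+1}(a)\times\G_{6u^{2}-2u+1}(b)$ with $\lfloor\frac{3u^{2}-4u+3}{2}\rfloor\le a\le\lfloor\frac{3u^{2}+2u}{2}\rfloor$. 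Passing to partitioned form $X=P\diamond_{l(X)/2}Q$, I would record the invariants that Theorem \ref{caracterizacion} already forces for this case: $l(X)=6u^{2}-2u$, so $P$ and $Q$ each carry $l(X)/2=3u^{2}-u$ parts, while by Lemma 2 the unit-run counts obey $3u-2\le\mathcal{N}_{P}(R_{1})\le 3u^{2}-u+1$ and $1\le\mathcal{N}_{Q}(R_{1})\le 3u^{2}-4u+2$. These are exactly the outer summation ranges for $h$ and (implicitly) $e$ in the claimed bound.

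The heart of the argument is to localise each run profile to the two halves. Set $h=\mathcal{N}_{P}(R_{1})$ and split the unit $+$-runs of $X$ into $d$ lying in $A$ and $h-d$ lying in $B$, with $0\le d\le h$; dually let $e$ count the unit $-$-runs inside $A$, the complementary ones sitting in $B$ and supplying the lower entry $3u^{2}-2u-h-e$ of the last multinomial. For a fixed admissible weight $a$ one introduces two weight-splitting parameters $a_{1}\in[0,a]$ and $b_{1}\in[0,6u^{2}-5u+2-a]$ that apportion the composition between $A$ and $B$; for each choice the number of run-compositions of the prescribed unit-run type with all remaining parts of length $\ge 2$ is the product of four multinomial coefficients, namely $\binom{a_{1}}{d,i_{2},\dots,i_{r}}$ and $\binom{a-a_{1}}{(h-d),j_{2},\dots,j_{s}}$ on the $+$-side and $\binom{b_{1}}{e,k_{2},\dots,k_{t}}$ and $\binom{6u^{2}-5u+2-a-b_{1}}{(3u^{2}-2u-h-e),l_{2},\dots,l_{w}}$ on the $-$-side. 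The accompanying identities $i_{2}+\cdots+i_{r}=a_{1}-d$, $j_{2}+\cdots+j_{s}=a-a_{1}-h+d$, $k_{2}+\cdots+k_{t}=b_{1}-e$ and $l_{2}+\cdots+l_{w}=3u^{2}-3u+2-a-b_{1}+h+e$ are just the book-keeping constraints forcing each group of parts to sum to its correct weight. Multiplying the four factors and summing over $h,a,d,a_{1},e,b_{1}$ yields the displayed quantity, exactly as in the circulant Hadamard template and in Theorem \ref{theorem_approx}.

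The step I expect to be the real obstacle is not the algebra of the multinomials but justifying that this enumeration is genuinely an \emph{upper} bound. The count imposes only the forced values $l(X)$ and $\mathcal{N}_{P}(R_{1})$ together with the localisation window for $a$, and it deliberately drops the stronger constraints $\mathcal{N}_{X}(R_{k})=\sum_{i_{1}+\cdots+i_{r}=k,\,1<r\le k}(-1)^{r}\mathcal{N}_{X}(R_{i_{1}}\cdots R_{i_{r}})$ of Theorem \ref{caracterizacion} and the full autocorrelation condition, so the genuine $PComS$ form a subset of what is tallied; this is what turns equality into inequality. The delicate point is that a single $+$- or $-$-run may straddle the seam between $A$ and $B$ or wrap cyclically, so the decomposition of the run profile of $X$ into a profile on $A$ and one on $B$ is not a bijection. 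What must be verified is that every $PComS(12u^{2}-4u+2,1,2)$ in $\G_{6u^{2}-2u+1}(a)\times\G_{6u^{2}-2u+1}(b)$ is hit at least once by some admissible tuple $(h,a,d,a_{1},e,b_{1})$, so that no sequence is omitted even when boundary runs cause repetitions. This is handled precisely as in the circulant Hadamard case; once it is granted, confirming that the limits for $h$ and $e$ are forced by Lemma 2 together with the $a$-window is a routine substitution of the case-two parameters.
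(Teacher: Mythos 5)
Your proposal is correct and takes essentially the same approach as the paper: the paper supplies no separate proof for this theorem, deferring explicitly (``just as we did with circulant Hadamard matrices'') to the four-multinomial counting argument of the circulant Hadamard bound, and your argument is precisely that template instantiated with the case parameters $l(X)=6u^{2}-2u$, $h\in[3u-2,3u^{2}-u+1]$, the weight window for $a$, and the splitting variables $d,a_{1},e,b_{1}$. Your closing remarks on why the enumeration only over-counts (dropping the higher run-structure constraints, seam-straddling and cyclic wrap of runs) are more care than the paper itself records, but they do not change the underlying decomposition or enumeration.
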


\section{Conclution}
In this paper was established the relationship between the Schur rings, the run structure 
and the periodic compatible binary sequences by means of using the autocorrelation function. We 
characterize the run structure of families $PComS$ and with this construction we generalize
combinatorial structures that have been studied for many years such as Hadamard matrices and 
perfect binary sequences. In this way we conclude that it is of great importance to study
Schur rings over $\Z_{2}^{n}$ whose basic sets are orbits of abelian subgroups from group 
of automorphism of $\Z_{2}^{n}$. Also, we note that the conjectures 1 to 6 are all related and, 
therefore it is necessary to research such Schur rings. On the other hand, the goal by defining
families $PComS$ is create the need of developing construction methods and applications of 
structure which are neither Hadamard matrices nor perfect binary sequences.

\section{Appendix}

All nontrivial families $PComS$, where $n$ ranges in $5,6,7,8,9$, are showed.

\subsection{$PComS$ in $\mathbb{Z}_{2}^{5}$}
\begin{enumerate}
\item $PComS(5,1,1)=\{(+----)_{C}\}$,
\item $PComS(5,2,-2)=\{(++---)_{C},(+--+-)_{C}\}$.
\end{enumerate}

\subsection{$PComS$ in $\mathbb{Z}_{2}^{6}$}
\begin{enumerate}
\item $PComS(6,1,2)=\{(+-----)_{C}\},$
\item $PComS(6,3,-2)=\{(++----)_{C},(+-+---)_{C},(++-+--)_{C}\},$
\end{enumerate}

\subsection{$PComS$ in $\mathbb{Z}_{2}^{7}$}
\begin{enumerate}
\item $PComS(7,1,3)=\{(+------)_{C}\}$,
\item $PComS(7,1,-1)=\{(++-+---)_{C}\},$
\item $PComS(7,3,1)=\{(++-----)_{C},(+-+----)_{C},(+---+--)_{C}\}$,
\item $PComS(7,3,-3)=\{(+++----)_{C},(++--+--)_{C},(+-+-+--)_{C}\}.$
\end{enumerate}

\subsection{$PComS$ in $\mathbb{Z}_{2}^{8}$}
\begin{enumerate}
\item $PComS(8,1,4)=\{(+-------)_{C}\}.$
\item $PComS(8,2,0)=\{(+-+-----)_{C},(+++--+--)_{C}\}.$
\item $PComS(8,3,0)=\{(+-+-----)_{C},(++-+----)_{C},(++--+---)_{C}\}.$
\item $PComS(8,4,0)=\{(++------)_{C},(+--+----)_{C},(+++-+---)_{C},\\
(+--++-+-)_{C}\}$
\item $PComS(8,4,-4)=\{(+--++-+-)_{C},(++---++-)_{C},(+++-+---)_{C},\\
(++----+-)_{C}\}.$
\item $PComS(8,5,4)=\{(++------)_{C},(+-+-----)_{C},(+--+----)_{C},\\
(+---+---)_{C},(++-+----)_{C}\}$
\item $PComS(8,9,-8)=\{(++++----)_{C},(++-+--+-)_{C},(+++-+---)_{C},\\
(+--++-+-)_{C},(+++---+-)_{C},(+++--+--)_{C},(++--+-+-)_{C},\\
(++---++-)_{C},(+---+---)_{C}\}$.
\end{enumerate}

\subsection{$PComS$ in $\mathbb{Z}_{2}^{9}$}
\begin{enumerate}
\item $PComS(9,1,5)=\{(+--------)_{C}\}$,
\item $PComS(9,2,-2)=\{(+++-+----)_{C},(+-+--++--)_{C}\}$
\item $PComS(9,2,-2)=\{(+++--+---)_{C},(++-+---+-)_{C}\}$
\item $PComS(9,2,-2)=\{(++---+-+-)_{C},(++----++-)_{C}\}$
\item $PComS(9,4,-4)=\{(++++-----)_{C},(++---++--)_{C},(+--+-+-+-)_{C},\\
(++-+--+--)_{C}\}$
\item $PComS(9,4,0)=\{(+++------)_{C},(+--+-----)_{C},(++---+-+-)_{C},\\
(+-+--++--)_{C}\}$
\item $PComS(9,4,8)=\{(++-------)_{C},(+-+------)_{C},(+--+-----)_{C},\\
(+---+----)_{C}\}$
\item $PComS(9,5,1)=\{(+--+-----)_{C},(++-+--+--)_{C},(+++------)_{C},\\
(++---+---)_{C},(+----+-+-)_{C}\}$
\item $PComS(9,6,6)=\{(++-------)_{C},(+-+------)_{C},(+---+----)_{C},\\
(++-+-----)_{C},(+----++--)_{C},(+--+---+-)_{C}\}$
\item $PComS(9,7,-1)=\{(+--+-----)_{C},(+-+--++--)_{C},(++-+---+-)_{C},\\
(++----++-)_{C},(+++------)_{C},(++---+---)_{C},(+----+-+-)_{C}\}$
\item $PComS(9,7,-1)=\{(++-+-----)_{C},(+----++--)_{C},(+--+---+-)_{C},\\
(++-+--+--)_{C},(+++------)_{C},(++---+---)_{C},(+----+-+-)_{C}\}$
\item $PComS(9,9,-3)=\{(+++------)_{C},(++---+---)_{C},(+----+-+-)_{C},\\
(++-+-----)_{C},(+----++--)_{C},(+--+---+-)_{C},(+-+--++--)_{C},\\
(++-+---+-)_{C},(++----++-)_{C}\}$
\item $PComS(9,10,2)=\{(+++------)_{C},(++---+---)_{C},(+----+-+-)_{C},\\
(++-+-----)_{C},(+----++--)_{C},(+--+---+-)_{C},(++----+--)_{C},\\
(+--+-+---)_{C},(++-----+-)_{C},(+--+-----)_{C}\}$
\end{enumerate}

\Addresses

\end{document}